\declaretheorem[numberwithin=section]{lemma}
\declaretheorem{proposition}
\declaretheorem{corollary}
\theoremstyle{definition}
\declaretheorem[sibling=lemma]{definition}
\declaretheorem[numbered=unless unique]{example}
\declaretheorem{claim}
\newcommand{\commentout}[1]{}
\newcommand{\ta}{\ensuremath{\widetilde{a}\xspace}}
\newcommand{\tb}{\ensuremath{\widetilde{b}\xspace}}
\newcommand{\tc}{\ensuremath{\widetilde{c}\xspace}}
\newcommand{\td}{\ensuremath{\widetilde{d}\xspace}}
\newcommand{\tu}{\ensuremath{\widetilde{u}\xspace}}
\newcommand{\tv}{\ensuremath{\widetilde{v}\xspace}}
\newcommand{\tw}{\ensuremath{\widetilde{w}\xspace}}
\newcommand{\tx}{\ensuremath{\widetilde{x}\xspace}}
\newcommand{\ty}{\ensuremath{\widetilde{y}\xspace}}
\newcommand{\tz}{\ensuremath{\widetilde{z}\xspace}}
\newcommand{\tB}{\ensuremath{\widetilde{B}\xspace}}
\newcommand{\tS}{\ensuremath{\widetilde{S}\xspace}}
\newcommand{\tG}{\ensuremath{\widetilde{G}\xspace}}
\newcommand{\tX}{\ensuremath{\widetilde{\mathbf X}\xspace}}
\newcommand{\mx}{\ensuremath{x^{*}\!}}
\newcommand{\my}{\ensuremath{y^{*}\!}}
\newcommand{\mz}{\ensuremath{z^{*}\!}}
\newcommand{\bX}{\ensuremath{\mathbf{X}\xspace}}
\newcommand{\bY}{\ensuremath{\mathbf{Y}\xspace}}
\newcommand{\btX}{\ensuremath{\mathbf{\widetilde{X}}\xspace}}
\newcommand{\ovG}{\overline G}
\begin{document}

\thispagestyle{empty}
\centerline{\Large\bf Bucolic complexes}

\vspace{10mm}

\centerline{{\sc B. Bre\v sar$^{\small 1}$, J. Chalopin$^{\small 2}$, V.
Chepoi$^{\small 2}$,} {\sc T. Gologranc}$^{\small 3}$, and {\sc D.
Osajda$^{\small {4,5}}$} }

\vspace{3mm}

\date{\today}

\medskip
\begin{small}
\centerline{$^1$Faculty of Natural Sciences and Mathematics, University of Maribor,}
\centerline {Koro\v ska cesta 160, SI-2000 Maribor, Slovenia}
\centerline{\texttt{bostjan.bresar@uni-mb.si}}

\medskip
\centerline{$^{2}$Laboratoire d'Informatique Fondamentale, Aix-Marseille Universit\'e and CNRS,}
\centerline{Facult\'e des Sciences de Luminy, F-13288 Marseille Cedex 9, France}

\centerline{\texttt{\{jeremie.chalopin, victor.chepoi\}@lif.univ-mrs.fr}}

\medskip
\centerline{$^3$Institute of Mathematics, Physics and Mechanics,}
\centerline {Jadranska 19, SI-1000 Ljubljana, Slovenia}
\centerline{\texttt{tanja.gologranc@imfm.si}}

\medskip
\centerline{$^{4}$Universit\"at Wien, Fakult\"at f\"ur Mathematik}
\centerline{Nordbergstra\ss e 15, 1090 Wien, Austria}

\medskip
\centerline{$^{5}$Instytut Matematyczny, Uniwersytet Wroc{\l}awski (on leave),}
\centerline{pl. Grunwaldzki 2/4, 50-384 Wroc{\l}aw, Poland}

\centerline{\texttt{dosaj@math.uni.wroc.pl}}

\end{small}

\bigskip\bigskip\noindent {\footnotesize {\bf Abstract.}  We introduce and investigate bucolic complexes, a common generalization of systolic complexes and of CAT(0) cubical complexes. They are defined as simply connected prism complexes satisfying some local combinatorial conditions. We study various approaches
to bucolic complexes: from graph-theoretic and topological perspective, as well as from the point of view of geometric group theory.
In particular, we characterize bucolic complexes by some properties of their $2$--skeleta and $1$--skeleta (that we call bucolic graphs),
by which several known results are generalized. We also show that
locally-finite bucolic complexes are contractible, and satisfy some nonpositive-curvature-like properties.
}

\section{Introduction}
\label{s:intro}
CAT(0) cubical complexes and systolic (simplicial) complexes constitute two classes of polyhedral complexes
that have been intensively explored over last decades. Both CAT(0) cubical and systolic complexes exhibit various
properties typical for spaces with different types of nonpositive curvature. Hence groups of isomorphisms of
such complexes provide numerous examples of groups with interesting properties. Both  CAT(0) cubical complexes
and systolic complexes can be nicely characterized via their 1- and 2-skeleta.  It turns out that their
1-skeleta --- median graphs and bridged graphs --- are intensively studied in various areas of discrete mathematics
(see Section \ref{s:related_work} for related results and references).

In this article we introduce a notion of \emph{bucolic complexes} --- polyhedral complexes being a common generalization of
CAT(0) cubical~\cite{Gr,Sag}, systolic~\cite{Ch_CAT,Hag,JaSw}, and weakly systolic~\cite{Osajda} complexes, and we initiate a regular study of them. Analogously to CAT(0) cubical and systolic complexes,
bucolic complexes are defined as simply connected prism complexes satisfying some local combinatorial conditions
(see Subsection~\ref{s:bucdef} for details).
Our main result on bucolic complexes is the following characterization via their 1- and 2-skeleta (see Section~\ref{prel} for explanations of all the notions involved).

\begin{restatable}{theorem}{thcomplexes}\label{theorem0} For a prism complex $\bX$, the following conditions are equivalent:

\begin{itemize}
\item[(i)] $\bX$ is a bucolic complex;
\item[(ii)] the $2$-skeleton $\bX^{(2)}$ of $\bX$ is a connected and simply connected triangle-square flag
complex satisfying the wheel, the 3-cube, and the 3-prism conditions;
\item[(iii)] the $1$-skeleton $G(\bX)$ of $\bX$ is a connected weakly
  modular graph that does not contain induced subgraphs of the form
  $K_{2,3},$ $W_4$, and $W_4^-$, i.e., $G(\bX)$ is a
  bucolic graph not containing infinite hypercubes.
\end{itemize}
Moreover, if ${\bf X}$ is a connected flag prism complex satisfying the wheel, the cube, and the prism conditions,
 then the universal cover $\widetilde{\bf X}$ of $\bf X$ is bucolic.
\end{restatable}

As an immediate corollary we obtain an analogous characterization (Corollary~\ref{cor-strong-buc-complexes} in Section~\ref{proof_triangle-square-complex_bridged}) of \emph{strongly bucolic complexes} --- the subclass of bucolic
complexes containing products of systolic complexes but not all weakly systolic complexes (see Subsection~\ref{s:bucdef} for details).
The condition (iii) in the above characterization is a global condition --- weak modularity concerns balls of arbitrary radius;
cf.\ Section~\ref{prel}. Thus the theorem --- and in particular the last assertion --- may be seen as a local-to-global result
concerning polyhedral complexes. It is an analogue of the Cartan-Hadamard theorem appearing in various contexts of
non-positive-curvature: CAT(0) spaces \cite{BaHe}, Gromov hyperbolic spaces \cite{Gr}, systolic and weakly systolic complexes \cites{JaSw,Osajda}.

The $1$--skeleta of CAT(0) cubical complexes are exactly the median graphs~\cite{Ch_CAT,Ro} which constitute a central graph class in metric graph theory (see \cite{BaCh_survey} and the references therein). In the literature there are numerous structural and other characterizations of median graphs. In particular, median graphs are the retracts of hypercubes~\cite{Ba_retract},
and can be obtained via so-called iterated gated amalgamations from cubes~\cites{Is,vdV1}. The general framework of fiber-complemented graphs was introduced in~\cite{Cha1,Cha2} and allows to prove such decomposition and retraction results.
From this perspective, \emph{bucolic graphs} are  exactly the
fiber-complemented graphs whose elementary gated subgraphs are weakly-bridged; more precisely, the $1$--skeleta of bucolic complexes admit the following characterization.

\begin{restatable}{theorem}{thgraphs}\label{theorem1} For a graph $G=(V,E)$ not containing  infinite cliques, the following conditions are equivalent:

\begin{itemize}
\item[(i)] $G$ is a retract of the (weak) Cartesian product of weakly
  bridged (respectively, bridged) graphs;
\item[(ii)] $G$ is a weakly modular graph not containing induced
  $K_{2,3},$ $W_4,$ and $W_4^-$ (respectively, $K_{2,3}$, $W_4^-$, $W_4$, and
  $W_5$), i.e., $G$ is a bucolic (respectively, strongly bucolic) graph;
\item[(iii)] $G$ is a  weakly modular graph not containing $K_{2,3}$
  and $W_4^-$ in which all elementary (or prime) gated subgraphs are edges or 2-connected
  weakly bridged (respectively, bridged) graphs.
\end{itemize}
\noindent
Moreover, if $G$ is finite, then the conditions (i)-(iii) are equivalent to the following condition:
\begin{itemize}
\item[(iv)] $G$ can be obtained by successive applications of gated
amalgamations from Cartesian products of 2-connected weakly bridged (respectively, bridged) graphs.
\end{itemize}
\end{restatable}

Theorem~\ref{theorem1} allows us to show further non-positive-curvature-like properties of bucolic complexes. The first one completes the analogy with the Cartan-Hadamard theorem.

\begin{restatable}{theorem}{thcontractible}\label{contractible} Locally-finite bucolic complexes are contractible.
\end{restatable}

In particular, the above theorem provides local conditions (the ones appearing in the definition of bucolic complexes) for a prism complex  implying asphericity. Note that this is one of not so many local asphericity criterions known --- most of them appears in the case of some nonpositive curvature.

Similarly to the case of CAT(0) cubical groups and weakly systolic groups, we think that groups acting on bucolic complexes form an important class and deserve further studies. We believe that they have similar properties as groups acting on nonpositively curved spaces, and that they may provide many interesting examples. In the current paper we indicate two basic results on such groups.

\begin{restatable}{theorem}{thfixedprism}\label{fixed_prism}  If $\bX$ is a locally-finite bucolic complex and  $F$ is a finite
group acting by cell automorphisms on $\bX$, then there exists a prism
$\pi$ of $\bX$ which is invariant under the action of $F.$ The center
of the prism $\pi$ is a point fixed by $F$.
\end{restatable}

A standard argument gives
the following immediate consequence of Theorem \ref{fixed_prism}.

\begin{corollary}
\label{conj}  Let $F$ be a group acting geometrically
by automorphisms on a locally-finite bucolic complex $\bX$. Then $F$ contains
only finitely many conjugacy classes of finite subgroups.
\end{corollary}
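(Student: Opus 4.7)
The plan is to carry out the standard Bridson--Haefliger style argument, now applied in the bucolic setting via Theorem~\ref{fixed_prism}. Given a finite subgroup $H\le F$, the fixed point theorem produces a prism $\pi_H$ of $\bX$ that is $H$-invariant; equivalently, $H$ is a subgroup of the setwise stabilizer $\mathrm{Stab}_F(\pi_H)$. The goal is then to show that, up to conjugacy in $F$, such prisms can be chosen from a finite list, and that each of the corresponding stabilizers is itself finite (so contributes only finitely many subgroups).

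Cocompactness supplies the first ingredient. Since $\bX$ is locally finite and $\bX/F$ is compact, the quotient is a compact cell complex and hence contains only finitely many cells; equivalently, there exist prisms $\pi_1,\dots,\pi_k$ whose $F$-orbits exhaust the set of all prisms of $\bX$. Writing $\pi_H=g\pi_i$ for some $g\in F$ and some index $i$, we immediately get $g^{-1}Hg\le \mathrm{Stab}_F(\pi_i)$, so every finite subgroup of $F$ is conjugate to a subgroup of one of the finitely many fixed groups $\mathrm{Stab}_F(\pi_1),\dots,\mathrm{Stab}_F(\pi_k)$.

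Properness supplies the second ingredient: each $\mathrm{Stab}_F(\pi_i)$ is finite, since a proper action has finite cell stabilizers, and a finite group has only finitely many subgroups. Combining the two observations, there are only finitely many subgroups of $F$ appearing as subgroups of some $\mathrm{Stab}_F(\pi_i)$, and every finite subgroup of $F$ is conjugate to one of them. This yields the desired finiteness of the number of conjugacy classes of finite subgroups.

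The only point requiring care is the step from cocompactness to finiteness of the set of $F$-orbits of prisms; this is routine for a locally finite cell complex with cocompact cellular action, since the quotient, being a compact CW-complex, contains only finitely many cells of each dimension and has bounded dimension. Everything else is a direct application of Theorem~\ref{fixed_prism} together with elementary group-theoretic bookkeeping.
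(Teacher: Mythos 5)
Your proposal is correct and is precisely the ``standard argument'' the paper invokes (with reference to the proof of \cite[Corollary 6.4]{ChOs}): apply Theorem~\ref{fixed_prism} to conjugate each finite subgroup into the stabilizer of one of the finitely many $F$-orbit representatives of prisms, these stabilizers being finite by properness. No issues; the routine cocompactness step you flag is handled exactly as you describe.
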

\medskip

We prove Theorems~\ref{contractible} and \ref{fixed_prism} only for
locally-finite bucolic complexes.  We do not know whether these theorems
hold for non-locally-finite bucolic complexes and we leave this as an open question.

\medskip
\noindent
{\bf Article's structure.}
In the following Section \ref{prel} we introduce all the notions used later on. In Section \ref{s:related_work},
we review the related work on which our paper is based or which is generalized in our paper.
In Section~\ref{STh12}, we provide the characterization of bucolic
graphs (Theorem~\ref{theorem1}). A proof of the main characterization of bucolic
complexes (Theorem~\ref{theorem0}) is presented in
Section~\ref{proof_triangle-square-complex_bridged}.  In
Section~\ref{pf56}, we prove the contractibility and the fixed point
result for locally-finite bucolic complexes (Theorems~\ref{contractible}
and~\ref{fixed_prism}). In Section~\ref{moorability}, we complete the proof of Theorem \ref{theorem1}
in  the  non-locally-finite case.
\medskip

\noindent
{\bf Acknowledgments.} We wish to thank the anonymous referee for a careful
reading of the first version of the manuscript, his useful critical suggestions
about the presentation, and for inciting us to consider the non-locally-finite case. This research was supported
by the French-Slovenian Egide
PROTEUS project ``Distances, structure and products of graphs". B.B.
is also with the Institute of
Mathematics, Physics and Mechanics, Ljubljana, and
was also supported by the Ministry of Science and Technology of
Slovenia under the grants J1-2043 and P1-0297. V.C. was
also supported by the ANR Projets THEOMATRO (grant ANR-10-
BLAN 0207) and GGAA (grant ANR-10-BLAN 0116).
D.O. was partially supported by MNiSW grant
N N201 541738. The authors
would like to acknowledge the participants of the ANR project GRATOS for organizing
the workshop ``Journ\'ees Graphes et Structures Topologiques" in a {\sf bucolic} place
in the C\'evennes and for inviting three
of us to participate.

\section{Preliminaries}
\label{prel}

\subsection{Graphs}

All graphs $G=(V,E)$ occurring in this paper are undirected,
connected, without loops or multiple edges, but not
necessarily finite or locally-finite.  For two vertices $u$ and $v$ of a graph $G$, we
will write $u\sim v$ if $u$ and $v$ are adjacent and $u\nsim v$,
otherwise. We will use the notation $v\sim A$ to note that a vertex
$v$ is adjacent to all vertices of a set $A$ and the notation $v\nsim
A$ if $v$ is not adjacent to any of the vertices of $A$.  For a subset
$A\subseteq V,$ the subgraph of $G=(V,E)$  {\it induced by} $A$
is the graph $G(A)=(A,E')$ such that $uv\in E'$ if and only if $uv\in E$.
$G(A)$ is also called a {\it full subgraph} of $G$. We will say
that a graph $H$ is {\it not an induced subgraph} of $G$ if $H$ is not isomorphic
to any induced subgraph $G(A)$ of $G$.

By an $(a,b)$-{\it path} in a graph $G$ we
mean a sequence of vertices $P=(x_0=a,x_1, \ldots, x_{k-1},x_k=b)$ such that
any two consecutive vertices $x_i$ and $x_{i+1}$ of $P$ are different and
adjacent (notice that in general we may have $x_i=x_j$ if $|i-j|\ge 2$). If $k=2,$
then we call $P$ a {\it 2-path} of $G$. If $x_i\ne x_j$ for $|i-j|\ge 2$, then $P$ is
called a {\it simple $(a,b)$-path}.
A graph $G=(V,E)$ is {\it 2-connected} if any two vertices $a,b$ of $G$
can be connected by two vertex-disjoint $(a,b)$-paths. Equivalently, a
graph $G$ is 2-connected if $G$ has at least 3 vertices and $G(V\setminus\{ v\})$ is connected for any
vertex $v\in V$, i.e., $G$ remains connected after removing from $G$ any vertex $v$
and the edges incident to $v$.

The {\em wheel} $W_k$ is a graph obtained by connecting a single
vertex -- the {\em central vertex} $c$ -- to all vertices of the
$k$-cycle $(x_1,x_2, \ldots, x_k,x_1)$; the {\it almost wheel}
$W_k^{-}$ is the graph obtained from $W_k$ by deleting a spoke (i.e.,
an edge between the central vertex $c$ and a vertex $x_i$ of the
$k$-cycle), see Figure~\ref{fig-interdits}. The \emph{extended
  $5$-wheel} $\widehat{W}_5$ is a $5$-wheel $W_5$ plus a $3$-cycle
$(a,x_1,x_2,a)$ such that $a \nsim c,x_3,x_4,x_5$.

\begin{figure}[t]
\begin{center}
\includegraphics{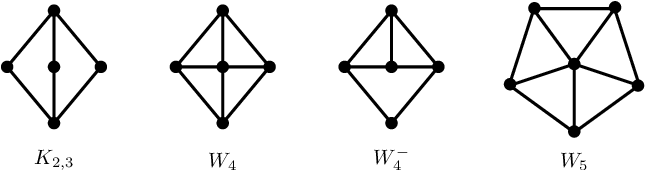}%
\end{center}
\caption{$K_{2,3}$, the wheel $W_4$, the almost-wheel $W_4^-$, and the
wheel $W_5$.}
\label{fig-interdits}
\end{figure}

The {\it  distance}
$d(u,v)=d_G(u,v)$ between two vertices $u$ and $v$ of a graph $G$ is the
length of a shortest $(u,v)$-path.  For a vertex $v$ of $G$ and an integer $r\ge 1$, we will denote  by $B_r(v,G)$ the \emph{ball} in $G$
(and the subgraph induced by this ball)  of radius $r$ centered at  $v$, i.e.,
$B_r(v,G)=\{ x\in V: d(v,x)\le r\}.$ More generally, the $r$-{\it ball  around a set} $A\subseteq V$
is the set (or the subgraph induced by) $B_r(A,G)=\{ v\in V: d(v,A)\le r\},$ where $d(v,A)=\mbox{min} \{ d(v,x): x\in A\}$.
As usual, $N(v)=B_1(v,G)\setminus\{ v\}$ denotes the set of neighbors of a vertex
$v$ in $G$.
The {\it interval}
$I(u,v)$ between $u$ and $v$ consists of all vertices on shortest
$(u,v)$-paths, that is, of all vertices (metrically) {\it between} $u$
and $v$: $I(u,v)=\{ x\in V: d(u,x)+d(x,v)=d(u,v)\}.$ An induced
subgraph of $G$ (or the corresponding vertex set $A$) is called {\it
  convex} if it includes the interval of $G$ between any pair of its
vertices. The smallest convex subgraph containing a given subgraph $S$
is called the {\it convex hull} of $S$ and is denoted by conv$(S)$. An induced
subgraph of $G$ (or the corresponding vertex set $A$) is called {\it
locally  convex} if it includes the interval of $G$ between any pair of its
vertices at distance two having a common neighbor in $A$. A graph $G=(V,E)$
is {\it isometrically embeddable} into a graph $H=(W,F)$
if there exists a mapping $\varphi : V\rightarrow W$ such that $d_H(\varphi (u),\varphi
(v))=d_G(u,v)$ for all vertices $u,v\in V$.

\begin{definition}[Retraction]
A {\it retraction}
$\varphi$ of a graph $G$ is an idempotent nonexpansive mapping of $G$ into
itself, that is, $\varphi^2=\varphi:V(G)\rightarrow V(G)$ with $d(\varphi
(x),\varphi (y))\le d(x,y)$ for all $x,y\in W$ (equivalently, a retraction is a
simplicial idempotent map $\varphi: G\rightarrow G$). The subgraph of $G$
induced by the image of $G$ under $\varphi$ is referred to as a {\it retract} of $G$.
\end{definition}

\begin{definition}[Mooring]
A map $f: V(G) \to V(G)$ is a \emph{mooring} of a graph $G$ onto $u$ if the following holds:
\begin{enumerate}
\item $f(u) = u$ and for every $v \neq u$, $f(v)\sim v$ and $d(f(v),u)
  = d(v,u)-1$.
\item for every edge $vw$ of $G$, $f(v)$ and $f(w)$ coincide or are
  adjacent.
\end{enumerate}
A graph $G$ is \emph{moorable} if, for every vertex $u$ of $G,$ there
exists a mooring of $G$ onto $u$.
\end{definition}

Mooring can be viewed as a combing property of graphs --- the notion coming
from  geometric group theory \cite{EpCaHoLePaTh}. Let $u$ be a
distinguished vertex (``base point") of a graph $G$.  Two shortest
paths $P(x,u),P(y,u)$ in $G$ connecting two adjacent vertices $x,y$ to
$u$ are called {\it $1$-fellow travelers} if $d(x',y')\le 1$ holds for
each pair of vertices $x'\in P(x,u), y'\in P(y,u)$ with
$d(x,x')=d(y,y').$ A {\it geodesic $1$-combing} of $G$ with respect to
the base point $u$ comprises shortest paths $P(x,u)$ between $u$ and
all vertices $x$ such that $P(x,u)$ and $P(y,u)$ are $1$-fellow
travelers for any edge $xy$ of $G$. One can select the combing paths
so that their union is a spanning tree $T_u$ of $G$ that is rooted at
$u$ and preserves the distances from $u$ to all vertices. The neighbor
$f(x)$ of $x\ne u$ in the unique path in $T_u$ connecting $x$ with the
root $u$ will be called the {\it father} of $x$ (set also $f(u)=u)$. Then
$f$ is a mooring of $G$ onto $u$ (vice-versa, any mooring of $G$ onto
$u$ can be viewed as a geodesic 1-combing with respect to $u$).  A
geodesic 1-combing of $G$ with respect to $u$ thus amounts to a tree
$T_u$ preserving the distances to the root $u$ such that if $x$ and
$y$ are adjacent in $G$ then $f(x)$ and $f(y)$ either coincide or are
adjacent in $G.$

\begin{definition}[Gated amalgam]
\label{d:gated}
An induced subgraph $H$ of a graph $G$
is {\it gated} \cite{DrSch} if for every vertex $x$ outside $H$ there
exists a vertex $x'$ in $H$ (the {\it  gate} of $x$)
such that  $x'\in I(x,y)$ for any $y$ of $H$.
A graph $G$ is a {\it gated amalgam} of two
graphs $G_1$ and $G_2$ if $G_1$ and $G_2$ are (isomorphic to) two intersecting
gated subgraphs of $G$ whose union is all of $G.$
\end{definition}

Gated sets are convex and the intersection of two gated sets is
gated. By Zorn lemma there exists a smallest gated subgraph containing
a given subgraph $S$, called the {\it gated hull} of $S$. A graph $G$
is said to be {\it elementary} \cite{Cha1} if the only proper gated
subgraphs of $G$ are singletons.

Let $G_{i}$, $i \in I$ be an arbitrary family of graphs. The
\emph{Cartesian product} $\Box_{i \in I} G_{i}$ is a graph whose vertices
are
all functions $x: i \mapsto x_{i}$, $x_{i} \in V(G_{i})$.
Two vertices $x,y$ are adjacent if there exists an index $j \in I$
such that $x_{j} y_{j} \in E(G_{j})$ and $x_{i} = y_{i}$ for all $i
\neq j$. Note that Cartesian product of infinitely many nontrivial
graphs is disconnected. Therefore, in this case the connected
components of the Cartesian product are called \emph{weak Cartesian
  products}. Since in our paper all graphs are connected, for us a
Cartesian product graph will always mean a weak Cartesian product
graph.  A graph with at least two vertices is said to be \emph{prime}
\cite{BaCh_weak,Cha1} if it is neither a Cartesian product nor a gated
amalgam of smaller graphs. A \emph{strong product} $\boxtimes_{i\in I}
G_i$ is a graph whose vertices are all functions $x: i \mapsto x_{i}$,
$x_{i} \in V(G_{i})$. Two vertices $x,y$ are adjacent if for all
indices $i\in I$ either $x_i=y_i$ or $x_{i} y_{i} \in E(G_{i}).$

\begin{definition}[Weakly modular graphs]
\label{d:weak-mod}
A graph $G$ is {\it weakly modular with respect to a vertex $u$}
 if its distance function $d$ satisfies the
following triangle and quadrangle conditions
(see Figure~\ref{fig-conditions}):
\begin{itemize}
\item
{\it Triangle condition} TC($u$):  for any two vertices $v,w$ with
$1=d(v,w)<d(u,v)=d(u,w)$ there exists a common neighbor $x$ of $v$
and $w$ such that $d(u,x)=d(u,v)-1.$
\item
{\it Quadrangle condition} QC($u$): for any three vertices $v,w,z$ with
$d(v,z)=d(w,z)=1$ and  $2=d(v,w)\le d(u,v)=d(u,w)=d(u,z)-1,$ there
exists a common neighbor $x$ of $v$ and $w$ such that
$d(u,x)=d(u,v)-1.$
\end{itemize}
A graph $G$ is {\it weakly modular} \cite{BaCh_helly} if $G$ is weakly modular with respect to any vertex $u$.
\end{definition}

Median, bridged, and weakly bridged graphs constitute three important
subclasses of weakly modular graphs.

\begin{definition}[Median graphs] A graph $G$ is {\it median} if it
is a bipartite weakly modular graph not containing $K_{2,3}$ as induced
subgraphs.
\end{definition}

Median graphs can be also defined in many other equivalent ways~\cite{ImKl,Mu,vdV}.
For example,  median graphs are exactly the graphs in which every triplet of vertices $u,v,w$ has a unique
{\it median}, i.e., a  vertex lying simultaneously in $I(u,v),I(v,w),$ and $I(w,u).$

\begin{definition}[Bridged and weakly bridged graphs]
A graph $G$ is {\it bridged} if it is weakly modular and does not
contain induced $4$- and $5$-cycles.  A graph $G$ is {\it weakly bridged}
if $G$ is a weakly modular graph and does not contain $4$-cycles.
\end{definition}

There exist other equivalent definitions of bridged
graphs~\cite{FaJa,SoCh}.
Bridged graphs are exactly the graphs that do not
contain isometric cycles of length greater than $3$.
Alternatively, a graph $G$ is bridged if and only if
the balls $B_r(A,G)$ around convex sets $A$ of $G$ are
convex. Analogously, a graph $G$ is weakly bridged if and only if $G$
has convex balls $B_r(x,G)$ and does not contain induced $C_4$~\cite{ChOs}.

Median, bridged, and weakly bridged graphs are pre-median graphs: a
graph $G$ is {\it pre-median} \cite{Cha1,Cha2} if $G$ is a weakly
modular graph without induced $K_{2,3}$ and $W^-_4$. An important
property of pre-median graphs is that within this class, a graph is
elementary if and only if it is prime~\cite{Cha1}.  Chastand
\cite{Cha1,Cha2} proved that pre-median graphs are fiber-complemented
graphs (a definition follows). Any gated subset $S$ of a graph $G$ gives rise to a partition
$F_{a}$ $(a\in S)$ of the vertex-set of $G;$ viz., the {\em fiber}
$F_{a}$ of $a$ relative to $S$ consists of all vertices $x$ (including
$a$ itself) having $a$ as their gate in $S.$ According to Chastand
\cite{Cha1,Cha2}, a graph $G$ is called {\it fiber-complemented} if
for any gated set $S$ all fibers $F_{a}$ $(a\in S)$ are gated sets of
$G$.

\begin{figure}[t]
\begin{center}
\includegraphics{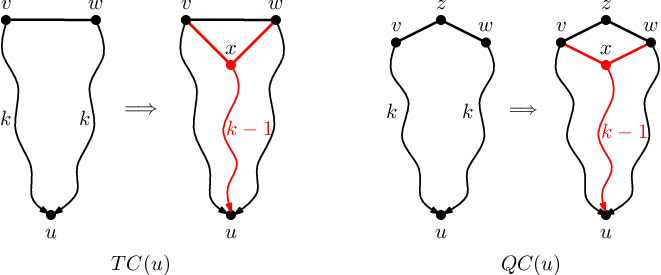}%
\end{center}
\caption{Triangle and quadrangle conditions}\label{fig-conditions}
\end{figure}

\subsection{Prism complexes}
In this paper, we consider a particular class of cell complexes
(compare e.g. \cite[p. 111-115]{BrHa}), called prism complexes, in
which all cells are prisms of finite dimension. Cubical and simplicial
cell complexes are particular instances of prism complexes. Although
most of the notions presented below can be defined for all cell
complexes and some of them for topological spaces, we will introduce
them only for prism complexes.

An {\it abstract simplicial complex} is a family $\bX$ of finite
subsets (of a given set) called {\it simplices} which is closed for
intersections and inclusions, i.e., $\sigma,\sigma'\in \bX$ and
$\sigma''\subset \sigma$ implies that $\sigma\cap\sigma', \sigma''\in
\bX$.  For an abstract simplicial complex $\bX$, denote by $V({\bX})$
and $E({\bX})$ the set of all 0-dimensional and 1-dimensional
simplices of ${\mathbf X}$ and call the pair $G(\bX)=(V({\mathbf
  X}),E({\mathbf X}))$ the {\it 1-skeleton} of $\bX$.  Conversely, for
a graph $G$ not containing infinite cliques, one can derive an
abstract simplicial complex ${\bX}_{\mbox{\footnotesize simpl}}(G)$
(the {\it clique complex} of $G$) by taking the vertex sets of all
complete subgraphs (cliques) as simplices of the complex.
By a \emph{simplicial complex} we will mean the geometric realization
of an abstract simplicial complex. It is a cell complex with cells corresponding to abstract
simplices, being (topologically) solid simplices.

A \emph{prism} is a convex polytope which is a Cartesian product of a
finite number of finite-dimensional simplices.  Faces of a prism are
prisms of smaller dimensions.  Particular instances of prisms are
simplices and cubes (products of intervals).  A {\it prism complex} is
a cell complex $\bX$ in which all cells are prisms so that the
intersection of two prisms is empty or a common face of each of them.
{\it Cubical complexes} are prism complexes in which all cells are
cubes and simplicial complexes are prism complexes in which all cells
are simplices.  The {\it 1-skeleton} $G({\bX})=\bX^{(1)}$ of a prism
complex $\bX$ has the 0-dimensional cells of $\bX$ as vertices and the
1-dimensional cells of $\bX$ as edges. The 1-skeleton of a prism of
$\bX$ is a {\it Hamming graph}, i.e., the Cartesian product of
complete subgraphs of $G(\bX).$ For vertices $v,w$ or a set of
vertices $A$ of a prism complex $\bX$ we will write $v\sim w,$ $v\sim
A$ (or $v\nsim w$, $v\nsim A$) if and only if a similar relation holds
in the graph $G(\bX)$. Note that a prism complex $\bX$ is connected if
and only $G(\bX)=\bX^{(1)}$ is a connected graph.  In this paper, all
prism complexes we consider are connected.  The $2$-skeleton
$\bX^{(2)}$ of $\bX$ is a \emph{triangle-square complex} obtained by
taking the 0-dimensional, 1-dimensional, and 2-dimensional cells of
$\bX$.  A prism complex $\bX$ is {\it simply connected} if every
continuous map $S^1\to X$ is null-homotopic. Note that $\bX$ is simply
connected if and only if $\bX^{(2)}$ is simply connected.  The \emph{star}
$\mbox{St}(v,\bX)$ of a vertex $v$ in a prism complex $\bX$ is the
subcomplex consisting of the union of all cells of $\bX$ containing
$v$.

For every graph $G$ that does not contain infinite cliques or infinite
hypercubes as induced subgraphs, $G$ gives rise to four cell
complexes: to a prism complex $\bX_{\mbox{\footnotesize prism}}(G)$,
to a simplicial complex $\bX_{\mbox{\footnotesize simpl}}(G)$, to a
cubical complex $\bX_{\mbox{\footnotesize cube}}(G),$ and to a
triangle-square complex $\bX_{\mbox{\footnotesize tr-sq}}(G).$ The
{\it prism complex} ${\bX}_{\mbox{\footnotesize prism}}(G)$ spanned by
$G$ has $P$ as a prism if and only if the 1-skeleton of $P$ is an
induced subgraph of $G$ which is a Hamming graph. Analogously, one can
define the {\it simplicial complex} $\bX_{\mbox{\footnotesize
    simpl}}(G)$ and the {\it cubical complex}
$\bX_{\mbox{\footnotesize cube}}(G)$ of $G$ as the complexes
consisting of all complete subgraphs and all induced cubes of $G$ as
cells, respectively.  In the same way, in the {\it triangle-square
  complex} $\bX_{\mbox{\footnotesize tr-sq}}(G)$ of $G$, the
triangular and square cells are spanned by the $3$-cycles and induced
$4$-cycles of $G.$ The triangle-square complex $\bX_{\mbox{\footnotesize
    tr-sq}}(G)$ of $G$ coincides with the $2$-skeleton of the prism
complex $\bX_{\mbox{\footnotesize simpl}}(G).$ Notice also that
$\bX_{\mbox{\footnotesize simpl}}(G)\bigcup \bX_{\mbox{\footnotesize
    cube}}(G)\subseteq \bX_{\mbox{\footnotesize prism}}(G).$ In
general, these four complexes can be pairwise distinct, but the graph
$G$ is the 1-skeleton of all these four complexes.

An abstract simplicial complex $\bX$ is a {\it flag complex} (or a {\it
clique complex}) if any set of vertices is included in a simplex of
$\mathbf X$ whenever each pair of its vertices is contained in a
simplex of ${\mathbf X}$ (in the theory of hypergraphs this condition
is called {\it conformality}; see for example~\cite{BaCh_survey}). A flag simplicial complex can therefore be recovered
from its underlying graph $G({\mathbf X})$ by the formula
${\mathbf X}={\bX}_{\mbox{\footnotesize simpl}}(G({\mathbf X}))$:
the complete subgraphs of
$G({\mathbf X})$ are exactly the simplices  of ${\mathbf X}.$   Analogously, a
prism (respectively, a cubical, or a triangle-square)  complex $\bX$ is a {\it flag
complex} if ${\mathbf X}={\bX}_{\mbox{\footnotesize prism}}(G({\mathbf X}))$
(respectively, ${\mathbf X}={\bX}_{\mbox{\footnotesize cube}}(G({\mathbf X}))$
or ${\mathbf X}={\bX}_{\mbox{\footnotesize tr-sq}}(G({\mathbf X}))$).
All  complexes occurring in this paper are flag complexes.

Let ${\bX}(W_k):={\bX}_{\mbox{\footnotesize tr-sq}}(W_k)$ and ${\bf X}(W^-_k):={\bX}_{\mbox{\footnotesize tr-sq}}(W^-_k)$
be the triangle-square
(or the prism) complexes whose underlying graphs are the graphs $W_k$
and $W^-_k$, respectively (the first consists of $k$ triangles and the
second consists of $k-2$ triangles and one square). Analogously, let
${\bX}(\widehat{W_5})={\bX}_{\mbox{\footnotesize simpl}}(\widehat{W_5})$ be the $2$-dimensional
simplicial complex made of 6 triangles and whose underlying graph is the extended
$5$-wheel $\widehat{W_5}$.

As morphisms between cell complexes we consider all \emph{cellular
  maps}, i.e.,  maps sending (linearly) cells to cells. An
\emph{isomorphism} is a bijective cellular map being a linear
isomorphism on each cell. A \emph{covering (map)} of a cell
complex $\bX$ is a cellular surjection $p\colon \widetilde{\bX} \to
\bX$ such that $p|_{\mbox{St}(\tv,\widetilde{\bX})}\colon
\mbox{St}(\tv,\widetilde{\bX})\to \mbox{St}(v,\bX)$ is an isomorphism
for every vertex $v$ in $\bX$, and every vertex $\tv \in
\widetilde{\bX}$ with $p(\tv)=v$; compare \cite[Section 1.3]{Ha}.  The
space $\widetilde{\bX}$ is then called a \emph{covering space}.  A
\emph{universal cover} of $\bX$ is a simply connected covering space
$\widetilde{\bX}$. It is unique up to isomorphism; cf.\ \cite[page
  67]{Ha}.  In particular, if $\bX$ is simply connected, then
its universal cover is $\bX$ itself.  A group $F$ \emph{acts by
  automorphisms} on a cell complex $\bX$ if there is a homomorphism
$F\to \mbox{Aut}(\bX)$ called an \emph{action of $F$}. The action is
\emph{geometric} (or \emph{$F$ acts geometrically}) if it is proper
(i.e. cells stabilizers are finite) and cocompact (i.e. the quotient
$\bX/F$ is compact).

\subsection{CAT(0) cubical complexes and systolic complexes}

A {\it geodesic triangle} $\Delta=\Delta (x_1,x_2,x_3)$ in a geodesic
metric space $(X,d)$ consists of three points in $X$ (the vertices
of $\Delta$) and a geodesic  between each pair of vertices (the
sides of $\Delta$). A {\it comparison triangle} for $\Delta
(x_1,x_2,x_3)$ is a triangle $\Delta (x'_1,x'_2,x'_3)$ in the
Euclidean plane  ${\mathbb E}^2$ such that $d_{{\mathbb
E}^2}(x'_i,x'_j)=d(x_i,x_j)$ for $i,j\in \{ 1,2,3\}.$

\begin{definition}[CAT(0) spaces]
 A geodesic metric space $(X,d)$ is  a {\it CAT(0)  (or a nonpositively curved)  space}
if the geodesic triangles in ${\bf X}$ are thinner than their comparison triangles in the Euclidean plane
\cite{Gr}, i.e, if $\Delta (x_1,x_2,x_3)$ is a geodesic triangle of $X$ and $y$ is a point on the side
of $\Delta(x_1,x_2,x_3)$ with
vertices $x_1$ and $x_2$ and $y'$ is the unique point on the line
segment $[x'_1,x'_2]$ of the comparison triangle
$\Delta(x'_1,x'_2,x'_3)$ such that $d_{{\mathbb E}^2}(x'_i,y')=
d(x_i,y)$ for $i=1,2,$ then $d(x_3,y)\le d_{{\mathbb
E}^2}(x'_3,y').$
\end{definition}

CAT(0) spaces satisfy many nice metric and convexity properties and can be characterized
in various ways (for a full account of this theory consult the book \cite{BrHa}).
For example, any two points of a CAT(0) space can be joined by a unique geodesic. CAT(0) property
is also equivalent to convexity of the function $f:[0,1]\rightarrow X$ given by
$f(t)=d(\alpha (t),\beta (t)),$ for any geodesics $\alpha$ and
$\beta$ (which is further equivalent to convexity of the
neighborhoods of convex sets). This implies that CAT(0) spaces are
contractible.

\begin{definition}[CAT(0) cubical complexes]  A cubical complex $\bX$ is a {\it CAT(0) cubical complex} if $\bX$ endowed with intrinsic
$\ell_2$-metric is a CAT(0) space.
\end{definition}

Gromov \cite{Gr} gave a  beautiful  characterization of CAT(0) cubical complexes as simply connected cubical complexes satisfying the following
combinatorial condition:  if three $k$-cubes
pairwise intersect in a $(k-1)$-cube and all three intersect in a
$(k-2)$-cube, then are included in a $(k+1)$-dimensional cube. This condition can be equivalently formulated as the requirement
that the links of 0-cubes are simplicial flag complexes.

\begin{figure}[t]
\begin{center}
\includegraphics{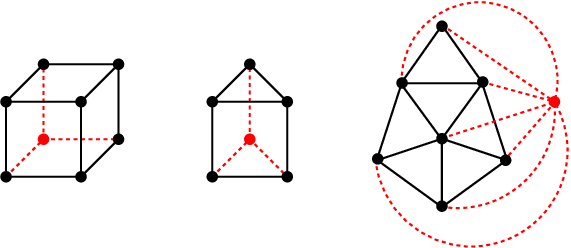}%
\end{center}
\caption{The 3-cube condition (left), the 3-prism condition (middle), and the
 $\widehat{W}_5$-wheel condition (right).}
\label{fig-conditions-X}
\end{figure}

Now we briefly recall the definitions of systolic and weakly systolic simplicial complexes, which are both considered as
simplicial complexes with combinatorial nonpositive curvature. For an integer $k\ge 4$, a flag simplicial complex $\bX$ is {\it locally $k$-large} if every cycle consisting of less than $k$
edges in any of its links of simplices has some two consecutive edges contained in
a $2$-simplex of this link, i.e., the links do not contain induced cycles of length $<k$.

\begin{definition}[Systolic and weakly systolic complexes]  A simplicial complex is
$k$-{\it systolic} if it is locally $k$-large, connected and simply connected. A flag simplicial complex is
{\it systolic} if it is 6-systolic \cite{Ch_CAT,JaSw,Hag}. A  simplicial complex $\bX$ is {\it weakly systolic} \cite{Osajda,ChOs}
if $\bX$ is flag,  connected and simply connected, locally 5-large, and
satisfies the following local condition:

\medskip\noindent {\it $\widehat{W}_5$-wheel condition}: for each
extended $5$-wheel $\bX(\widehat{W_5})$ of $\bX$, there exists a
vertex $v$ adjacent to all vertices of this extended $5$-wheel (see
Fig.~\ref{fig-conditions-X}, right).
\end{definition}

\subsection{Bucolic complexes and bucolic graphs}
\label{s:bucdef}
In this subsection we define central objects of the article: bucolic complexes and bucolic graphs\footnote{The term {\sf bucolic} is inspired by {\sf systolic}, where {\bf b} stands for {\sf bridged} and {\sf c}
for {\sf cubical}. See also Acknowledgments for another source of our ``inspiration''.}.

\begin{definition}[Bucolic complexes]
\label{d:buc}
A prism complex $\bX$ is {\it bucolic} if it is  flag, connected and simply connected, and satisfies the  following
three local conditions:

\smallskip\noindent
{\it Wheel condition:} the $1$-skeleton $\bX^{(1)}$ of $\bX$ does not contain induced $W_4$ and satisfies
the $\widehat{W}_5$-wheel condition;

\smallskip\noindent
{\it Cube condition:} if $k\ge 2$ and three $k$-cubes of ${\bX}$
pairwise intersect in a $(k-1)$-cube  and all three intersect in a $(k-2)$-cube,
then they  are included in a $(k+1)$-dimensional cube of $\bX$;

\smallskip\noindent
{\it Prism condition:} if a cube and a simplex of $\bX$ intersect in a
$1$-simplex, then they are included in a prism of $\bX$.
\medskip

A bucolic complex $\bX$ is \emph{strongly bucolic} if $G(\bX)$ does
not contain induced $W_5$, i.e., a prism complex $\bX$ is strongly
bucolic if it is flag, connected, simply connected, and satisfies the
cube and prism conditions, as well as the following local condition:

\smallskip\noindent
{\it Strong-wheel condition:} the $1$-skeleton $\bX^{(1)}$ of $\bX$ does not contain induced $W_4$ and $W_5$.

\end{definition}

As we already noticed, subject to simple connectivity, the wheel condition
characterizes the weakly systolic complexes in the class of flag simplicial complexes.
On the other hand,  the cube condition for cubical complexes is equivalent to
Gromov's condition of flagness of links. Finally, the prism condition for prism complexes
shows how simplices and cubes of  $\bX$ give rise to prisms.

Now, we consider the $2$-dimensional versions of the last two conditions. We introduce two combinatorial
conditions for a triangle-square complex ${\bX}$:

\smallskip\noindent
{\it 3-Cube condition}: any three squares of ${\bX}$, pairwise intersecting in an
edge, and all three intersecting in a vertex
of ${\bX}$, are included in the $2$-skeleton of a $3$-dimensional cube (see
Fig.~\ref{fig-conditions-X}, left);

\smallskip\noindent
{\it 3-Prism condition}: any house (i.e., a triangle and a square of
${\bX}$ sharing an edge) is included in the $2$-skeleton of a $3$-dimensional prism (see
Fig.~\ref{fig-conditions-X}, middle).

\smallskip
We conclude this section with the definition of bucolic and strongly bucolic graphs.

\begin{definition}[Bucolic graphs]
A graph $G$ is {\it bucolic} if it is weakly modular, does not contain
infinite cliques and does not contain induced subgraphs of the form
$K_{2,3},$ $W_4$, and $W_4^-$. A bucolic graph is \emph{strongly bucolic} if it does not contain
induced $W_5$.
\end{definition}

\section{Related work}\label{s:related_work}

In his seminal paper \cite{Gr}, among many other results, Gromov characterized  CAT(0) cubical  complexes as simply connected cubical complexes in which the links of 0-cubes are simplicial flag complexes. Subsequently,  Sageev~\cite{Sag} introduced and investigated the concept of (combinatorial) hyperplanes of CAT(0) cubical complexes, showing in particular that each hyperplane is itself a CAT(0) cubical complex and partitions the complex into two CAT(0) cubical complexes. These two results identify CAT(0) cubical complexes as the basic objects in a ``high-dimensional Bass-Serre theory'', and CAT(0) and nonpositively--curved cubical complexes have thus been studied extensively in geometric group theory.  For instance, many well-known classes of groups are known to act nicely on CAT(0) cubical complexes.

It was shown in \cite{Ch_CAT,Ro} that  the 1-skeleta of CAT(0)
cubical complexes are exactly the median graphs. This result establishes a bridge between two seemingly different mathematical structures and two different
areas of mathematics.   Median graphs and related structures (median algebras, event structures, copair Helly hypergraphs) occur in different areas of
discrete mathematics, universal algebra,  and theoretical computer science. Median graphs, median algebras, and
CAT(0) cubical complexes have many nice properties and admit numerous
characterizations. All median structures are intimately
related to hypercubes: median graphs are isometric subgraphs of
hypercubes; in fact, by a classical result of Bandelt \cite{Ba_retract}
they are the retracts of  hypercubes into which they embed isometrically. This isometric embedding of each median graph
into a hypercube canonically defines on the associated CAT(0) cubical complex a space with walls ``\`a la Haglund-Paulin'' and Sageev's hyperplanes.
It was also shown  by Isbell \cite{Is} and van de Vel \cite{vdV1} that
every finite median graph $G$ can be obtained by gated amalgams from hypercubes, thus showing that $K_2$ is
the only prime median graph. Median graphs also have a remarkable algebraic structure, which is
induced by the ternary operation on the vertex set that assigns to
each triplet of vertices the unique median vertex, and their algebra can be characterized
using four natural axioms \cite{BaHe,Is} among all discrete ternary algebras.
For more detailed information about median structures, the interested reader can
consult the survey
\cite{BaCh_survey} and the books \cite{ImKl,Mu,vdV}.

Bridged graphs are the graphs in which all isometric cycles have length $3$. It was shown in \cite{FaJa,SoCh} that
the bridged graphs are exactly the graphs in which the metric convexity satisfies one of the basic
properties of the CAT(0) geometry: neighborhoods of convex sets are
convex. Combinatorial and structural aspects of bridged graphs have been investigated in
\cite{AnFa,ChLaPo,Po}. In particular, it was shown in \cite{AnFa} that
finite bridged graphs are dismantlable.
Similarly to the local-to-global characterization of CAT(0) cubical complexes of \cite{Gr},
it was shown in \cite{Ch_CAT}
that the clique complexes of bridged graphs are exactly the simply connected simplicial
flag complexes  in which the links of vertices do not contain induced $4$- and $5$-cycles.
These complexes have been rediscovered and investigated in depth by Januszkiewicz and Swiatkowski \cite{JaSw}, and,
independently by Haglund \cite{Hag}, who called them ``systolic complexes"
and considered them as simplicial complexes satisfying combinatorial
nonpositive curvature property. In general, these complexes are not  CAT(0).
More recently, Osajda \cite{Osajda} proposed a generalization of systolic complexes still preserving most of the structural properties of systolic
complexes: the resulting weakly systolic complexes and their 1-skeleta -- the weakly bridged graphs -- have been investigated and
characterized in \cite{ChOs}.

The structure theory of graphs based on Cartesian multiplication and gated
amalgamation was further elaborated for more general classes of graphs.  Some of the results for median
graphs have been extended to quasi-median graphs introduced by Mulder \cite{Mu} and further
studied in \cite{BaMuWi}: quasi-median graphs are the weakly modular graphs not containing induced
$K_{2,3}$ and $K_4-e$ and they can  be characterized as
the retracts of Hamming graphs (finite quasi-median graphs can be obtained from  complete
graphs by Cartesian products and gated amalgamations). More recently, Bandelt and  Chepoi \cite{BaCh_weak} presented a similar decomposition scheme
of weakly median graphs (the weakly modular graphs in which the vertex $x$ in the triangle and quadrangle conditions is unique) and characterized
the prime graphs with respect to this decomposition: the hyperoctahedra and their subgraphs, the 5-wheel $W_5$, and the 2-connected
plane bridged graphs.   Generalizing the proof of the decomposition
theorem of \cite{BaCh_weak}, Chastand \cite{Cha1,Cha2} presented a general framework of fiber-complemented graphs allowing to
establish many  general properties, previously
proved only for particular classes of graphs.  An important subclass
of  fiber-complemented graphs is the class of pre-median graphs \cite{Cha1,Cha2}. It is an open problem
to characterize all prime (elementary) fiber-complemented or pre-median graphs (see \cite[p. 121]{Cha1}).

Since CAT(0) cubical complexes and systolic simplicial complexes can be both characterized via their
1-skeleta and via simple connectivity and local conditions, a natural question is to find a common generalization
of such complexes which still obey the combinatorial nonpositive curvature properties.
The prism complexes derived from
fiber-complemented graphs is a potential source of such cell complexes. In \cite{BrChChKoLaVa},
answering a question from \cite{brth-09}, the first step in this direction was taken, and
the 1-skeleta of prism complexes resulting from clique complexes of chordal graphs by applying Cartesian
products and gated amalgams have been characterized. It was also shown that, endowed with the $l_2$-metric,
such prism complexes are CAT(0) spaces.

In this paper, we continue this line of research and characterize the
graphs $G$ which are retracts of Cartesian products of weakly bridged
and bridged graphs. We show (cf. Theorem \ref{theorem1}) that
these graphs are exactly the bucolic and strongly bucolic graphs.
We also establish that the finite bucolic (respectively, strongly bucolic)
graphs are exactly the graphs obtained by gated amalgamations of
Cartesian products of weakly bridged (respectively, bridged) graphs, thus
answering Question 1 from \cite{BrChChKoLaVa}. This also provides a
partial answer to Chastand's problem mentioned above (by showing that
the weakly bridged graphs are exactly the prime graphs of pre-median
graphs without $W_4$ and that the bridged graphs are the prime graphs
of pre-median graphs without $W_4$ and $W_5$) and extends the
analogous results on finite median, quasi-median, and weakly median
graphs.  Our previous result can be viewed as a characterization of
1-skeleta of bucolic complexes.  We also characterize (Theorem
\ref{theorem0}) bucolic complexes via their 2-skeleta by showing
that they are exactly the simply connected triangle-square complexes
satisfying the 3-cube and 3-prism conditions and not containing $W_4$,
$W_5,$ and $W^-_5$ (this answers Question 2 from \cite{BrChChKoLaVa})
and, together with the first result, generalizes the characterizations
of CAT(0) cubical complexes, systolic and weakly systolic complexes
via theirs 1- and 2-skeleta.

Then we prove that the locally-finite bucolic complexes are contractible (Theorem
\ref{contractible}).
Thus the three results constitute a version of the Cartan-Hadamard theorem,
saying that under some local conditions the complex is aspherical, i.e.
its universal covering space is contractible. Only limited number of such local
characterizations of asphericity is known, and most of them refer to the notion
of nonpositive curvature; cf. e.g. \cite{BrHa,EpCaHoLePaTh,Gr,JaSw,Osajda}.
In fact bucolic complexes exhibit many nonpositive-curvature-like properties.
Besides the Cartan-Hadamard theorem we prove the fixed point theorem for finite
groups acting on locally-finite bucolic complexes (Theorem \ref{fixed_prism}), and we conclude
that groups acting geometrically on
such complexes have finitely
many conjugacy classes of finite subgroups (Corollary \ref{conj}).
Counterparts of such results are known for other nonpositively curved spaces;
cf. e.g. \cite{BrHa,ChOs,JaSw,Osajda}.
Thus our classes of complexes and groups acting on them geometrically form
new classes of combinatorially nonpositively curved complexes and groups (see e.g.
\cite{Gr,JaSw,Osajda,ChOs} for more background) containing the CAT(0) cubical
and systolic classes of objects. A question of studying such unification
theories was raised often by various researchers, e.g. by Januszkiewicz
and {\'S}wi{\c{a}}tkowski (personal communication). Due to our knowledge,
bucolism is the first
generalization of the CAT(0) cubical and systolic worlds studied up to now.

The class of bucolic complexes is closed under taking finite Cartesian products and
gated amalgamations. Thus
the class of groups
acting geometrically on them is also closed under similar operations.
It should be noticed that both systolic and CAT(0) cubical groups satisfy some
strong (various for different classes) restrictions; cf. e.g. \cite{Osajda} and
references therein. It implies that there are groups that are neither systolic
nor CAT(0) cubical but which act geometrically on our complexes.
In particular, in view of Theorem \ref{fixed_prism} and the fixed point theorems
for systolic and CAT(0) complexes (compare \cites{ChOs,BrHa}), the free product of a
systolic group with a CAT(0) cubical group amalgamated over a finite subgroup always acts geometrically on
a complex from our class. Note however that such a product is often not
systolic neither CAT(0) cubical. Another example with these properties is the
Cartesian product of two systolic but not CAT(0) cubical groups.

\section{Bucolic graphs}
\label{STh12}

In this section, we prove the following characterization of bucolic graphs:

\thgraphs*

The most difficult part of the proof of the theorem are the implications
(ii)$\Rightarrow$(iii) and (iii)$\Rightarrow$(i). The main step in the proof of  (ii)$\Rightarrow$(iii)
is  showing  that all primes of weakly modular  graphs
not containing induced $W_4$ and $W_4^-$ are 2-connected weakly bridged graphs or $K_2$. To prove (iii)$\Rightarrow$(i), we
need to show that weakly bridged graphs are moorable. For locally-finite graphs this was proven in \cite{ChOs}.
The mooring of non-locally-finite weakly bridged graphs is established in Section \ref{moorability}.
Then, we deduce the theorem from the results of \cite{BaCh_wma1,Cha1,Cha2}.

\subsection{Gated closures of triangles}
\label{Striangle}
In this section, we prove that if $G$ is a weakly modular graph not
containing induced 4-wheels $W_4$ and almost 4-wheels $W_4^-$,
then the gated hull of any triangle of $G$ is a weakly bridged graph.
Additionally,
if $G$ does not contain induced 5-wheels $W_5,$ then the gated hull
of a triangle is a bridged graph.

\begin{lemma}\label{p:wheel}
Let $G$ be a graph without induced $W_4,W_4^-$ and satisfying the triangle condition.
Then $G$ does not contain an induced $W^-_n$ for $n > 4.$
\end{lemma}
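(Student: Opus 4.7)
\noindent\emph{Proof plan.} The plan is to argue by contradiction: suppose $G$ contains an induced $W_n^-$ with $n\ge 5$, labelled so that the rim cycle is $(x_1,\ldots,x_n)$ and the center $c$ satisfies $c\sim x_i$ for $2\le i\le n$ and $c\nsim x_1$. The starting observation is that $\{x_1,x_2,c,x_n\}$ induces a $4$-cycle of $G$: its potential diagonals $x_1c$ and $x_2 x_n$ are non-edges, the latter because $n\ge 5$ forces $x_2\nsim x_n$ in the induced rim cycle. Since $G$ forbids both $W_4$ and $W_4^-$ as induced subgraphs, \emph{no vertex of $G$ can be adjacent to three or four vertices of this $C_4$.}

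Next I would apply the triangle condition TC($x_1$) to the adjacent pair $(x_3,x_4)$, both of which lie at distance $2$ from $x_1$ in $W_n^-$. This yields a common neighbor $y$ of $x_3,x_4$ with $y\sim x_1$; since $c$ is the only common neighbor of $x_3,x_4$ inside $W_n^-$ and $c\nsim x_1$, the vertex $y$ is fresh. Applying the starting observation to $y$ shows that $y$ is adjacent to at most one of $\{x_2,c,x_n\}$. A short combinatorial check then disposes of four ``easy'' sub-cases: if $y\sim c$ then $\{y,x_1,x_2,x_3\}$ is an induced $C_4$ whose three vertices $y,x_2,x_3$ are all adjacent to $c$, yielding $W_4^-$; if $y\sim x_2$ then $\{y,x_2,c,x_4\}$ is an induced $C_4$ to which $x_3$ is adjacent at all four vertices, yielding $W_4$; if $y\sim x_n$ then $\{y,x_n,c,x_3\}$ is an induced $C_4$ to which $x_4$ is adjacent at three of its vertices (at all four when $n=5$, since $x_4\sim x_5$), yielding $W_4^-$ or $W_4$; finally, if $y\sim x_i$ for some $5\le i\le n-1$ then $\{y,x_i,c,x_3\}$ is an induced $C_4$ to which $x_4$ is adjacent at (at least) three of its vertices, again yielding $W_4^-$ or $W_4$. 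Hence I may assume that the neighbors of $y$ within $W_n^-$ are exactly $\{x_1,x_3,x_4\}$.

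In this hard case I would apply TC($c$) to the adjacent pair $(x_1,y)$, both of which lie at distance $2$ from $c$. Because no vertex of $W_n^-\cup\{y\}$ is adjacent to both $x_1$ and $y$ (one checks that $N(x_1)=\{x_2,x_n,y\}$ and $N(y)=\{x_1,x_3,x_4\}$ have empty intersection there), TC($c$) produces a new vertex $z$ with $z\sim x_1,y,c$, and the starting observation then forces $z\nsim x_2$ and $z\nsim x_n$. A three-way sub-case on the adjacencies of $z$ to $x_3$ and $x_4$ finishes: if $z\sim x_3$ then $z$ is adjacent to three vertices of the induced $C_4$ $\{y,x_1,x_2,x_3\}$, producing $W_4^-$; if $z\nsim x_3$ but $z\sim x_4$ then $\{y,z,c,x_3\}$ is an induced $C_4$ to which $x_4$ is adjacent at all four vertices, producing $W_4$; and if $z\nsim x_3$ and $z\nsim x_4$ then $\{z,y,x_4,c\}$ is an induced $C_4$ to which $x_3$ is adjacent at three of its vertices, producing $W_4^-$. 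Each sub-case contradicts the hypothesis on $G$.

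The main obstacle I expect is administrative rather than conceptual: at every step one has to verify that the four-vertex sets claimed to induce a $C_4$ really are induced (no hidden chords), and that the auxiliary vertices $y$ and $z$ do not collide with any previously named vertex. Both checks follow from the explicit adjacency structure of $W_n^-$ and from the constraint that $G$ contains no induced $W_4$ or $W_4^-$, so the argument goes through uniformly for all $n\ge 5$.
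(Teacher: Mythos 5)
There is a genuine gap at the very first step. You apply TC($x_1$) to the pair $(x_3,x_4)$, asserting that both lie at distance $2$ from $x_1$ ``in $W_n^-$''. This is false for $n\ge 6$: in $W_n^-$ the neighbors of $x_1$ are only $x_2$ and $x_n$, and none of them is adjacent to $x_4$ unless $n=5$, so $d_{W_n^-}(x_1,x_4)=3$. More importantly, the triangle condition is a condition on distances in $G$, and for $n\ge 6$ you only know $d_G(x_1,x_3)=2$ and $2\le d_G(x_1,x_4)\le 3$; nothing forces $d_G(x_1,x_4)=2$, and if $d_G(x_1,x_4)=3$ then TC($x_1$) simply does not apply to the pair $(x_3,x_4)$ (the two distances must be equal). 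So the vertex $y\sim x_1,x_3,x_4$ on which your whole argument rests is not available for $n\ge 6$, and the lemma is claimed for all $n>4$. For $n=5$ your argument is correct and complete: the existence of $y$ is legitimate there ($x_3$ and $x_4$ are both non-adjacent to $x_1$ and have internal $2$-paths to it), your second application TC($c$) to $(x_1,y)$ is sound (both distances to $c$ are exactly $2$), and all the $C_4$/extra-vertex checks producing forbidden $W_4$ or $W_4^-$ go through.

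Note how the paper's proof avoids exactly this pitfall: it takes a \emph{minimal} $n$ for which an induced $W_n^-$ exists and applies the triangle condition with apex $x_{n-1}$ to the adjacent pair $(x_1,x_2)$, whose distances to $x_{n-1}$ are pinned to exactly $2$ for every $n\ge 5$ by internal $2$-paths ($x_1x_nx_{n-1}$ and $x_2cx_{n-1}$) together with non-adjacency in the induced subgraph. The resulting vertex $a\sim x_1,x_2,x_{n-1}$ is then either adjacent to $c$ (immediate $W_4$/$W_4^-$), or, if it misses some intermediate rim vertex, one extracts an induced $W_k^-$ with $4\le k<n$ (using $c$ as the almost-center), contradicting minimality; if it hits all of $x_3,\dots,x_{n-2}$, a direct $W_4$/$W_4^-$ appears. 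To repair your proof you would need either to adopt this minimal-counterexample reduction (your $n=5$ analysis could then serve as the base configuration), or to justify the existence of a vertex adjacent to $x_1,x_3,x_4$ by some other means when $d_G(x_1,x_4)=3$, which your proposal does not do.
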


\begin{proof} Suppose by way of contradiction that $W^-_n$
is an induced subgraph of $G$ and suppose that $G$ does not contain
induced $W^-_k$ for any $3<k<n.$ Let $(x_1,x_2,\ldots,x_n,x_1)$ be the
outer cycle $C$ of $W^-_n$ and consider a vertex $c$ adjacent to all
vertices of $C$ except $x_1.$ We apply the triangle condition to the
triple $x_1,x_2,x_{n-1}$ and find a vertex $a \in N(x_1) \cap N(x_2)
\cap N(x_{n-1})$. Note that if $a \sim c$, then $x_1,x_2,c,x_n,a$
induce $W_4$ if $a$ is adjacent to $x_n$ or $W_4^-$ otherwise. Thus $a\nsim c$.
If $n=5$, then the vertices $x_4, a, x_2, c,
x_3$ induce either a $W_4$ if $x_3$ is adjacent to $a$, or a $W_4^-$
otherwise.  Now, if $n\ge 6$ and if $a$ is not adjacent to $x_3,x_4,
\ldots,x_{n-3}$ or $x_{n-2}$, the subgraph induced by the vertices
$a,x_2,x_3,\ldots, x_{n-1},c$ has an induced subgraph isomorphic to
one of the forbidden induced subgraphs $W^-_k,$ where $k < n$. Thus
$a$ is adjacent to all vertices of $C$ except maybe $x_n$. The
vertices $a,x_3,c,x_{n-1},x_4$ induce $W_4$, if $n=6$, or $W^-_4$
otherwise, a contradiction.
\end{proof}

Let $H$ be an induced
subgraph of a graph $G$. A 2-path $P=(a,v,b)$ of $G$ is
\emph{$H$-fanned} if $a,v,b \in V(H)$ and if there exists an
$(a,b)$-path $P'$ in $H$ not passing via $v$ and such that $v$ is
adjacent to all vertices of $P'$, i.e., $v\sim P'$.  Notice that
$P'$ can be chosen to be an induced path of $G$. A path $P=(x_0,x_1,
\ldots,x_{k-1},x_k)$ of $G$ with $k>2$ is \emph{$H$-fanned} if every
three consecutive vertices $(x_i,x_{i+1},x_{i+2})$ of $P$ form an
$H$-fanned 2-path. When $H$ is clear from the context (typically
when $H = G$), we say that $P$ is \emph{fanned}.  If the endvertices
of a 2-path $P=(a,v,b)$ coincide or are adjacent, then $P$ is fanned.
Here is a simple generalization of this remark (whose immediate proof is left
to the reader).

\begin{lemma}
\label{l:chord}
If $P=(x_0,x_1,\ldots,x_k)$ is a fanned path and the vertices
$x_{i-1}$ and $x_{i+1}$ coincide or are adjacent, then the paths
$P'=(x_0,\ldots,x_{i-2},x_{i+1},x_{i+2}, \ldots,x_k)$ in the first
case and $P''=(x_0,\ldots,x_{i-1},x_{i+1}, \ldots,x_k)$ in the second
case are also fanned.
\end{lemma}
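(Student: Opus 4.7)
The plan is to exploit the fact that being fanned is a purely local condition: by definition, a walk is fanned if and only if every three of its consecutive vertices form a fanned $2$-walk. Consequently, for either of the contracted walks $W'$ or $W''$, I need only verify fannedness for the few consecutive triples that are affected by the deletion; all other consecutive triples of the new walk are already consecutive triples of $W$ and so are fanned by hypothesis.

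For Case~2 ($x_{i-1}\sim x_{i+1}$), the walk $W''$ is obtained from $W$ by deleting $x_i$ alone, so the only new consecutive triples are $(x_{i-2},x_{i-1},x_{i+1})$ and $(x_{i-1},x_{i+1},x_{i+2})$. For the first, I would take the fanning $(x_{i-2},x_i)$-walk $P$ witnessing that $(x_{i-2},x_{i-1},x_i)$ is fanned in $W$, and extend $P$ by the edge $x_ix_{i+1}$; since $x_{i-1}\sim x_{i+1}$ and $x_{i+1}\neq x_{i-1}$, the extended walk still avoids $x_{i-1}$ and $x_{i-1}$ remains adjacent to each of its vertices. For the second new triple, prepend the edge $x_{i-1}x_i$ to the fanning $(x_i,x_{i+2})$-walk of $(x_i,x_{i+1},x_{i+2})$, and argue dually.

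For Case~1 ($x_{i-1}=x_{i+1}$), the walk $W'$ is obtained from $W$ by removing the subwalk $x_{i-1},x_i$ and keeping the vertex $x_{i+1}=x_{i-1}$, so the genuinely new consecutive triples are $(x_{i-3},x_{i-2},x_{i+1})$ and $(x_{i-2},x_{i+1},x_{i+2})$. The first coincides as a triple of vertices with $(x_{i-3},x_{i-2},x_{i-1})$, which is a consecutive triple of $W$ and hence fanned without further work. For the second, which equals $(x_{i-2},x_{i-1},x_{i+2})$, I would concatenate at the common vertex $x_i$ the fanning $(x_{i-2},x_i)$-walk $P$ of $(x_{i-2},x_{i-1},x_i)$ with the fanning $(x_i,x_{i+2})$-walk $Q$ of $(x_i,x_{i+1},x_{i+2})$; both $P$ and $Q$ avoid $x_{i-1}=x_{i+1}$ and both have $x_{i-1}$ adjacent to all of their vertices, so the resulting $(x_{i-2},x_{i+2})$-walk is a fanning walk for the new triple.

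The verification is elementary and I expect no real obstacle: once one records carefully which triples are new and which are inherited from $W$, the only two tricks needed are the one-vertex extension of a fanning walk using the edge $x_{i-1}x_{i+1}$ in Case~2, and the concatenation of two fanning walks at the pivot $x_i$ in Case~1, exploiting in the latter case the vertex-identification $x_{i-1}=x_{i+1}$ to ensure both walks avoid the same vertex and are uniformly fanned by it.
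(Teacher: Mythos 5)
Your proposal is correct and follows essentially the same route as the paper's proof: reduce to the few affected consecutive triples, then in the case $x_{i-1}=x_{i+1}$ concatenate the two fanning walks at the pivot $x_i$, and in the case $x_{i-1}\sim x_{i+1}$ extend a fanning walk by the edge to $x_{i+1}$ (resp.\ from $x_{i-1}$), using that the common apex is adjacent to the appended vertex. No gaps to report.
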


In the remaining auxiliary results of this section, $G$ is a
weakly modular graph without induced
$W_4$ and $W_4^-$. By Lemma \ref{p:wheel}, $G$ does
not contain $W^-_k$ with $k > 3.$

\begin{lemma}\label{l:cycle}
If $C=(x,u,y,v,x)$ is an induced $4$-cycle of $G$, then no simple $2$-path of $C$
is fanned.
\end{lemma}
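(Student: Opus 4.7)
The plan is to argue by contradiction. By the rotational symmetry of the induced $4$-cycle $C=(x,u,y,v,x)$, it suffices to show that the $2$-path $(x,u,y)$ is not $G$-fanned. Assuming it is, there exists an $(x,y)$-walk in $G\setminus\{u\}$ all of whose vertices are adjacent to $u$; I would take $W'=(x=x_0,x_1,\ldots,x_k=y)$ to be such a walk of minimum length, chosen to be an induced path of $G$. Since $x\nsim y$, we have $k\ge 2$.

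If $k=2$, the internal vertex $x_1$ is distinct from $v$ (because $x_1\sim u$ while $v\nsim u$) and from $x,y,u$, so $\{x,u,y,v,x_1\}$ are five distinct vertices containing the induced $4$-cycle $C$ together with $x_1$ adjacent to $x,u,y$. If $x_1\sim v$ then $x_1$ is adjacent to all four vertices of $C$, producing an induced $W_4$; if $x_1\nsim v$ then $x_1$ is adjacent to exactly three of them, producing an induced $W_4^-$. Either case contradicts the hypotheses on $G$.

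If $k\ge 3$, I would first prove that $v\nsim x_i$ for every $1\le i\le k-1$. Suppose otherwise and let $i$ be the smallest such index. For $i=1$, the set $\{x,x_1,u,y,v\}$ induces $W_4^-$: the $4$-cycle is $C$ itself, $x_1$ is adjacent to $x,u,v$, and $x_1\nsim y$ because $W'$ is induced and $k\ge 3$ forces $x_1\nsim x_k=y$. For $i\ge 2$, the cycle $(x_0,x_1,\ldots,x_i,v,x_0)$ is induced, since the $W'$-chords are absent by assumption on $W'$ and no chord $vx_j$ with $1\le j<i$ exists by minimality of $i$; adjoining $u$, which is adjacent to every $x_j$ but not to $v$, yields an induced $W_{i+2}^-$. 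This is the forbidden $W_4^-$ when $i=2$, and is forbidden by Lemma~\ref{p:wheel} when $i\ge 3$. In all subcases we reach a contradiction, so $v\nsim x_i$ for every $1\le i\le k-1$.

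With this claim in hand, the cycle $(x_0,x_1,\ldots,x_k,v,x_0)$ is induced of length $k+2\ge 5$, and adding $u$ (adjacent to each $x_i$ but not to $v$) yields an induced $W_{k+2}^-$, contradicting Lemma~\ref{p:wheel}. Hence $(x,u,y)$ is not fanned, and by the symmetry of $C$ no induced $2$-path of $C$ is fanned. The most delicate step is the $i=1$ subcase of the claim: the natural ``induced cycle plus $u$'' configuration degenerates to a triangle of length $3$ rather than an induced cycle of length $\ge 4$, so one must enlarge the picture by the far endpoint $y=x_k$ and use the original induced $4$-cycle $C$ itself to extract the forbidden $W_4^-$.
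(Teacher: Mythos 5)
Your proof is correct and follows essentially the same route as the paper's: choose a shortest (hence induced) fan path for one $2$-path of the square, observe that the opposite vertex of $C$ is non-adjacent to the middle vertex and hence off that path, and extract a forbidden $W_4$, $W_4^-$, or $W_n^-$ (via Lemma~\ref{p:wheel}) by closing a cycle through that opposite vertex. The only difference is organizational: the paper stops at the first path vertex adjacent to the fourth vertex of $C$ and forms the wheel there, whereas you first rule out all such adjacencies of interior vertices and then close the full cycle, which amounts to the same argument.
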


\begin{proof} Suppose that the simple 2-path  $P=(u,y,v)$ is
fanned.  Let $R=(u,t_1,\ldots,t_m,t_{m+1}=v)$ be a shortest
$(u,v)$-path such that $y\sim R$ (such a path exists because $P$ is
fanned).  Necessarily, $R$ is an induced path of $G$.  Since $C$ is
induced, $m \geq 1$ and $t_i \neq x$ for all $i \in \{1, \ldots,m\}.$
If $t_1$ is adjacent to $x$, then the vertices $x,u,y,v,t_1$
induce $W_4$ if $t_1$ is adjacent to $v$, or $W^-_4$
otherwise. Suppose now that $t_1$ is not adjacent to $x$ and let $i
\geq 2$ be the smallest index such that $t_i$ is adjacent to $x$.
Since $R$ is a shortest path, the cycle $(x,u,t_1,\ldots,t_i,x)$ is
induced. Thus the vertices $x,u,t_1,\ldots,t_i,y$ induce a forbidden
$W^-_{i+2}$.
\end{proof}

Let $v$ be a common neighbor of vertices $a$ and $b$ of $G.$ For an
$(a,b)$-path $P$, we denote by $D_v(P)$ the distance sum $D_v(P):=\sum_{x\in P}
d(x,v).$

\begin{lemma}
\label{l:covered}
Let $P=(a=x_0, x_1,\ldots,x_m=b)$ be a fanned $(a,b)$-path not
containing $v,$ let $k = \max \{ d(x_i,v): x_i \in P\}$ and $j$
be the smallest index so that $d(x_{j},v)=k$. If $k\ge 2$, and $j
\notin \{0,m\}$ then
\begin{itemize}
\item[(1)] either $x_{j-1}=x_{j+1}$ and the path $P'=(x_0, \ldots,x_{j-2},x_{j+1},
x_{j+2},\ldots,x_m)$ is fanned,
\item[(2)] either $x_{j-1}\sim x_{j+1}$ and the path
$P''=(x_0,\ldots,x_{j-1},x_{j+1},\ldots,x_m)$ is fanned,
\item[(3)] or there exists a vertex $y$ such that $d(y,v)=k-1$ and the path $P''' =
(x_0,\ldots,x_{j-1}, y, x_{j+1},\ldots,x_m)$ is fanned.
\end{itemize}
\end{lemma}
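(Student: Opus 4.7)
The plan is to perform a case analysis on the relationship between $x_{j-1}$ and $x_{j+1}$, handling the three enumerated conclusions in order, and then to derive the ``in particular'' statement by a monotonicity argument on the distance sum $D$.

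First, record the baseline distance facts: since $j$ is the least index along $W$ where the maximum $k$ is attained and $x_{j-1}\sim x_j$, one has $d(x_{j-1},v)=k-1$; by maximality of $k$ and the edge $x_jx_{j+1}$, the distance $d(x_{j+1},v)$ is either $k-1$ or $k$. If $x_{j-1}=x_{j+1}$ or $x_{j-1}\sim x_{j+1}$, then Lemma~\ref{l:chord} immediately shows that the corresponding shortcut walk $W'$ or $W''$ is fanned, disposing of the first two cases without further work.

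Assume then that $x_{j-1}$ and $x_{j+1}$ are distinct and non-adjacent, so $d(x_{j-1},x_{j+1})=2$ is witnessed by $x_j$. When $d(x_{j+1},v)=k-1$, apply the quadrangle condition $\mathrm{QC}(v)$ to the triple $(x_{j-1},x_{j+1},x_j)$ (valid once $k\ge 3$; the case $k=2$ puts $x_{j-1},x_{j+1}\in B_1(v,G)$ and is handled by inspection) to obtain a common neighbor $y$ of $x_{j-1}$ and $x_{j+1}$ strictly closer to $v$ than $x_j$. When $d(x_{j+1},v)=k$, use the fanned property of the 2-walk $(x_{j-1},x_j,x_{j+1})$ to obtain an induced path $W_0$ of neighbors of $x_j$ from $x_{j-1}$ to $x_{j+1}$, then combine the triangle condition $\mathrm{TC}(v)$ applied to $x_j,x_{j+1}$ with the absence of induced $W_4, W_4^-$ and Lemma~\ref{p:wheel} to locate a common neighbor $y$ of $x_{j-1}$ and $x_{j+1}$ at the required distance $k-1$ from $v$. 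In each subcase one must then verify that $W'''$ remains fanned; the new 2-walks near the splice, namely $(x_{j-2},x_{j-1},y)$ and $(y,x_{j+1},x_{j+2})$, inherit fanning from the certificates already available for $W$ together with Lemma~\ref{l:chord} and the freshly-produced 2-walk $(x_{j-1},y,x_{j+1})$ (which is fanned via $x_j$).

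The main obstacle is the subcase $d(x_{j+1},v)=k$: one must ensure the replacement vertex $y$ is a common neighbor of $x_{j-1}$ and $x_{j+1}$ (not merely of $x_j$ and $x_{j+1}$, which is what $\mathrm{TC}(v)$ produces directly), with the correct distance to $v$; here the exclusion of $W_4$ and $W_4^-$ as induced subgraphs is essential to forbid the alternative configurations that would otherwise arise when combining the triangle-condition vertex with the fanning path $W_0$. Finally, the ``in particular'' clause drops out by strict monotonicity: each of the three conclusions replaces $W$ by a walk whose distance sum is strictly smaller (by $(2k-1)+$ for $W'$, by $k$ for $W''$, and by at least $1$ for $W'''$), so a fanned walk $W$ avoiding $v$ with minimal $D(W)$ must satisfy $k\le 1$; since $v\notin W$, every vertex of $W$ lies at distance exactly $1$ from $v$, i.e., $v\sim W$.
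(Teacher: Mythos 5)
The branch where $x_{j-1}\neq x_{j+1}$, $x_{j-1}\nsim x_{j+1}$ and $d(x_{j+1},v)=k-1$ is where your argument breaks down. The quadrangle condition QC($v$) applied to $x_{j-1},x_{j+1},x_j$ produces a common neighbor of $x_{j-1}$ and $x_{j+1}$ at distance $k-2$ from $v$ (it is $v$ itself when $k=2$), not $k-1$, so it cannot serve as the splice vertex $y$ required by the third alternative of the lemma. Worse, since that vertex is at distance $k-2$ while $d(x_j,v)=k$, it is not adjacent to $x_j$, so your certificate that the new 2-walk $(x_{j-1},y,x_{j+1})$ is ``fanned via $x_j$'' is false; in fact the 4-cycle $(y,x_{j-1},x_j,x_{j+1},y)$ is induced, and Lemma~\ref{l:cycle} says that no induced 2-path of an induced 4-cycle is fanned, so no such splice can ever work. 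The paper's treatment of this subcase is the exact opposite of a construction: the induced 4-cycle $(z,x_{j-1},x_j,x_{j+1},z)$ obtained from QC($v$), together with the fact that $(x_{j-1},x_j,x_{j+1})$ is fanned as a 2-walk of the fanned walk $W$, contradicts Lemma~\ref{l:cycle}, so this configuration simply cannot occur (and your ``handled by inspection'' for $k=2$ is this same contradiction with $z=v$, not something that can be waved away).

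In the remaining subcase $d(x_{j+1},v)=k$ you identify the right difficulty -- the TC($v$) vertex $y\in N(x_j)\cap N(x_{j+1})$ with $d(y,v)=k-1$ must be shown adjacent to $x_{j-1}$ -- but you give no actual argument: appealing to the absence of $W_4$, $W_4^-$ and to Lemma~\ref{p:wheel} does not by itself force $y\sim x_{j-1}$. The paper proves it by contradiction: if $y\nsim x_{j-1}$, then QC($v$) applied to $x_{j-1},x_j,y$ yields $z\in N(x_{j-1})\cap N(y)$ with $d(z,v)=k-2$, the 4-cycle $(x_{j-1},x_j,y,z,x_{j-1})$ is induced, and the fanning certificate $Q_0$ for the 2-walk $(x_{j-1},x_j,x_{j+1})$, extended by $y$, shows that $(x_{j-1},x_j,y)$ is fanned, again contradicting Lemma~\ref{l:cycle}; only then does the splice $W'''$ make sense. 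Once $y\sim x_{j-1}$ is established, your verification that $W'''$ is fanned and your strict-monotonicity argument for the ``in particular'' clause are correct and agree with the paper, but as written the proof has genuine gaps in both subcases of the non-adjacent configuration.
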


\begin{proof}
If $x_{j-1}=x_{j+1}$ or $x_{j-1}\sim x_{j+1},$ then Lemma
\ref{l:chord} implies that the paths $P'$ and $P''$ are fanned. So,
suppose that $x_{j-1}$ and $x_{j+1}$ are different and
non-adjacent. Note that $d(x_{j-1},v)=k-1$ and $k-1\le d(x_{j+1},v)\le
k$. If $d(x_{j+1},v)=k-1$, then we can use the quadrangle condition
for vertices $v,x_{j-1},x_j$ and $x_{j+1}$ and find a vertex $z\in
N(x_{j-1})\cap N(x_{j+1})$ such that $d(v,z)=k-2$ ($z=v$ if $k=2$).
Since $z$ and $x_j$ are not adjacent, the $4$-cycle
$(z,x_{j-1},x_j,x_{j+1},z)$ is induced. Since $P$ is fanned, the
2-path $(x_{j-1},x_j,x_{j+1})$ is fanned as well, contradicting Lemma
\ref{l:cycle}.

Thus $d(x_{j+1},v)=k$. Applying  the triangle condition to the
triple $v,x_j,x_{j+1}$,
we can find  a common neighbor $y$ of $x_j$ and $x_{j+1}$  with
$d(v,y)=k-1$.  Note that $y\neq x_{j-1}$ since $x_{j-1}\nsim x_{j+1}.$
Assume first $x_{j-1}\nsim y$. Then
we can apply the quadrangle condition to the vertices
$x_{j-1},x_j,y,v$, and find a vertex $z \in N(x_{j-1}) \cap N(y)$ with
$d(z,v)=k-2$ ($z=v$ if $k=2$).  Clearly, $z$ is not adjacent to $x_j$ and
$x_{j+1}.$ Hence, the cycle
$(x_{j-1},x_j,y,z,x_{j-1})$ is
induced. Since the $2$-path $(x_{j-1},x_{j},x_{j+1})$ is fanned, there exists a
$(x_{j-1},x_{j+1})$-path
$Q_0$ not containing $x_j$ such that $x_j\sim Q_0.$ As a consequence, $(Q_0,y)$
is a
$(x_{j-1},y)$-path of $G$  not passing via $x_j$ whose all vertices are adjacent
to $x_j$. Therefore the $2$-path
$(x_{j-1},x_j,y)$ of the induced $4$-cycle $(x_{j-1},x_j,y,z,x_{j-1})$ is fanned,
contradicting Lemma \ref{l:cycle}.
This implies that $x_{j-1}$ must be adjacent to $y$.
Then $P'''=(x_0,\ldots,x_{j-1},y,x_{j+1},\ldots,x_m)$ is a path
of $G$. We claim that $P'''$ is fanned. Indeed, all  2-paths of $P'''$, except
the three
consecutive 2-paths $(x_{j-2},x_{j-1},y),(y,x_{j+1},x_{j+2}),
(x_{j-1},y,x_{j+1}),$ are also 2-paths of $P$, hence they are
fanned. The 2-path
$(x_{j-1},y, x_{j+1})$ is fanned because $y$ is adjacent to all
vertices of the path $(x_{j-1},x_j,x_{j+1}).$ Since the 2-path
$(x_{j},x_{j+1},x_{j+2})$
is fanned, there is an
$(x_{j},x_{j+2})$-path $R$ such that $x_{j+1}\sim R$.  Then all vertices of the
$(y,x_{j+2})$-path $(y,R)$ are
adjacent to $x_{j+1}$, whence the 2-path $(y,x_{j+1},x_{j+2})$ is fanned.
Analogously, one can show that the
2-path $(x_{j-2},x_{j-1},y)$ is fanned, showing that $P'''$ is fanned.
\end{proof}

From the proof of Lemma \ref{l:covered}, since each of
$D_v(P'),D_v(P''),D_v(P''')$ is smaller than $D_v(P),$ we conclude
that if $v \sim a,b$ and $P$ is a fanned $(a,b)$-path not containing
$v$ with minimal distance sum $D_v(P),$ then $k=1.$ Therefore, we
obtain the following result:

\begin{corollary}\label{fanned-avoid}
If $v \sim a,b$ and if $P$ is a fanned $(a,b)$-path avoiding $v$ with
minimal distance sum $D_v(P),$ then $v\sim P$.
\end{corollary}

Let $\triangleleft$ be a well-order on $V(G)$.  Let $T=\{a_0,b_0,c_0\}$
be a triangle in $G$. We define a subgraph $K$ of
$G$ by (possibly transfinite) induction as follows. Let $H_0, H_1,
H_2$ be the subgraphs respectively induced by $\{a_0\}, \{a_0, b_0\}$
and $ \{a_0, b_0,c_0\}$. Given an ordinal $\alpha$, assume that for
every $\beta < \alpha$, we have defined $H_\beta$, and let
$H_{<\alpha}$ be the subgraph induced by $\bigcup_{\beta < \alpha}
V(H_\beta)$.  Let
$$X=\{v \in V(G)\setminus V(H_{< \alpha}):
\mbox{ there exist } x,y \in V(H_{<\alpha}) \mbox{ such that } v \sim x,y\}.$$
  If $X$ is nonempty, then let $v$ be the least element of $(X,\triangleleft)$
and define $H_{\alpha}$ to be the subgraph of $G$ induced by $V(H_{< \alpha}\cup\{v\})$.
Otherwise, if $X$ is empty, then set $K:=H_{< \alpha}$.

\begin{lemma}
\label{p:gate1}
For any ordinal $\alpha$, $H_{\alpha}$ is 2-connected and any 2-path
of $H_\alpha$ is $K$-fanned. Moreover, $K$ is 2-connected and any 2-path
of $K$ is $K$-fanned.

\end{lemma}

\begin{proof}
We proceed by induction on $\alpha$.  Clearly, $H_0, H_1, H_2=T$
fulfil these properties. Assume by induction hypothesis that for
every $\beta < \alpha$, $H_\beta$ is $2$-connected and that any 2-path
of $H_{\beta}$ is $K$-fanned.

We first show that $H_{< \alpha}$ is 2-connected and that any 2-path
of $H_{< \alpha}$ is $K$-fanned. Consider any three vertices $a, b, u
\in V(H_{<\alpha})$. There exists $\beta<\alpha$ such that $a,b,u
\in V(H_\beta)$. By the induction hypothesis, there exists a path from
$a$ to $b$ in $H_\beta\setminus \{u\}$. Since $H_\beta\setminus \{u\}$
is a subgraph of $H_{< \alpha}\setminus \{u\}$, $a$ is not
disconnected from $b$ in $H_{< \alpha}\setminus \{u\}$, and thus $H_{<
  \alpha}$ is 2-connected.  For every 2-path $(a,b,c)$ in $H_{<
  \alpha}$, there exists $\beta < \alpha$ such that $a,b,c \in
V(H_\beta)$. By the induction hypothesis, the 2-path $(a,b,c)$ is
$K$-fanned.

If $K = H_{< \alpha}$, we are done. Otherwise, let $v$ be the unique
vertex of $V(G)$ such that $V(H_\alpha) = V(H_{<\alpha}) \cup \{v\}$.
By the definition of $H_{\alpha}$, $v$ has at least two neighbors
$x,y$ in $H_{<\alpha}$.

Suppose that $H_\alpha$ is not 2-connected. Consider three distinct
vertices $a,b,u \in V(H_\alpha)$.  If $a, b \in V(H_{<\alpha}) $, we
know there exists a path from $a$ to $b$ in $H_{<\alpha} \setminus
\{u\}$. Without loss of generality, assume now that $b = v$ and $u
\neq x$. We know that there exists a path from $a$ to $x$ in
$H_{<\alpha} \setminus \{u\}$ and consequently, there exists a path
from $a$ to $b = v$ in $H_{\alpha} \setminus \{u\}$ since $x \sim
v$. Consequently, for every $a, b, u \in V(H_\alpha)$, $u$ does not
disconnect $a$ from $b$, i.e., $H_\alpha$ is 2-connected.

We will prove that any 2-path of $H_{\alpha}$ is $K$-fanned.  It
suffices to consider the 2-paths $Q$ of $H_{\alpha}$ that contain $v$,
since all other 2-paths lie in $H_{<\alpha}$ and are $K$-fanned.

\smallskip
\noindent{\em Case 1.} $Q=(a,v,c)$.

Since $H_{< \alpha}$ is connected and $a,c \in V(H_{< \alpha})$, there
exists an $(a,c)$-path $R$ in $H_{<\alpha}$. Since any 2-path of $H_{<
  \alpha}$ is $K$-fanned by induction hypothesis, $R$ itself is
$K$-fanned.  As $H_{< \alpha}$ is a subgraph of $K$, $R$ belongs to
$K$.  Among all $K$-fanned $(a,c)$-paths belonging to $K$ and avoiding
$v$, let $P=(a =x_0, x_1,\ldots,x_m=c)$ be chosen in such a way that
the distance sum $D_v(P)=\sum_{x_i\in P}d(v,x_i)$ is minimized (note
that $P$ does not necessarily belong to $H_\alpha$).  By Corollary
\ref{fanned-avoid}, $v \sim P$ and thus the 2-path $Q$ is $K$-fanned.

\smallskip
\noindent{\em Case 2.} $Q=(c,b,v)$.

If $c$ and $v$ coincide or are adjacent, then $Q$ is trivially
fanned. Thus we may assume that $c\ne v$, and $c\nsim v$. Since $v$
has at least two neighbors in $H_{< \alpha}$, there exists a vertex $a
\in H_{< \alpha}$ adjacent to $v$ and different from $b$.  Since
$H_{<\alpha}$ is 2-connected and $a,c \in H_{<\alpha}$, there exists
an $(a,c)$-path $P_0$ in $H_{< \alpha}$ that avoids $b$. The paths
$P_0$ and $(P_0,b)$ are $K$-fanned because all their 2-paths are
fanned by the induction hypothesis. Hence, there exists at least one
$K$-fanned $(a,b)$-path $(P_0,b)$ that passes via $c$, avoids $v$, and
all vertices of $P_0$ are different from $b$.  Among all such
$(a,b)$-paths $(P_0,b)$ of $K$ (i.e., that pass $c$, avoid $v$, the
vertices of $P_0$ are different from $b,$ and are $K$-fanned), let
$P=(a=x_0, x_1, \ldots, x_m,x_{m+1}=c,b)$ be chosen in such a way that
$D_v(P)$ is minimized.  Since $v$ and $x_{m+1}=c$ are different and
not adjacent, $k = \max \{d_G(x_i,v): x_i \in P\}\ge 2$.  Let $j$ be
the smallest index such that $d(x_{j},v)=k$.

First suppose that $j\ne m+1$. By Lemma \ref{l:covered}, the vertices
$a$ and $b$ can be connected by one of the paths $P',P'',P'''$ derived
from $P$. These paths are $K$-fanned, contain the vertex $c$, avoid
the vertex $v$, and all three have smaller distance sums than $P$. In
case of $P'$ and $P''$ we obtain a contradiction with the minimality
choice of $P$. Analogously, in case of $P'''$ we obtain the same
contradiction except if the vertex $y$ coincides with $b$, i.e., $b$
is adjacent to the vertices $x_{j-1},x_j,$ and $x_{j+1}$. In this case,
$d(x_j,v) = 2$ and $x_{j-1} \sim v$. Consider the
$2$-path $(c,b,x_{j+1})$. By construction, the path
$R=(x_{m+1} =c,x_m, \ldots,x_{j+2}, x_{j+1})$ is $K$-fanned and
avoids $b$. Applying Lemma~\ref{l:covered} and Corollary \ref{fanned-avoid}
with $b$ and $R$, there
exists a $K$-fanned $(c,x_{j+1})$-path $R'$ avoiding $b$ such that $b
\sim R'$. Consequently, there is a path $(R',x_j,x_{j-1},v)$ in $K$
from $c$ to $v$ in the neighborhood of $b$ and thus $(c,b,v)$ is
$K$-fanned.

 Now suppose that $j=m+1,$ i.e., $v$ is adjacent to all vertices of
 $P$ except $x_{m+1}=c.$ From the choice of $P$ we conclude that $b\ne
 x_m.$ If $b\nsim x_m,$ then $C=(v,x_m,c,b,v)$ is an induced
 $4$-cycle. Since the $2$-path $(b,c,x_m)$ is $K$-fanned and simple, we obtain a
 contradiction with Lemma \ref{l:cycle}. Finally, if $b$ is adjacent
 to $x_m,$ then the (simple) 2-path $(c,b,v)$ is $K$-fanned because $c$ and $v$
 are connected in $K$ by the (simple) 2-path $(c,x_m,v)$ and $x_m$ is adjacent
 to $b$.
\end{proof}

\begin{lemma}
\label{p:gate2}
For any ordinal $\alpha$, $H_{<\alpha}$ and $H_{\alpha}$ do not contain induced
$4$-cycles.
\end{lemma}

\begin{proof} Again we proceed by induction on $\alpha$.

Suppose by induction hypothesis that for every $\beta < \alpha$,
$H_{\beta}$ does not contain induced $4$-cycles. If there exists a
$4$-cycle $(a,b,c,d,a)$ in $H_{<\alpha}$, there exists $\beta < \alpha$
such that $a, b, c, d \in V(H_\beta)$. Since $H_\beta$ is an induced
subgraph of $H_{<\alpha}$, $(a,b,c,d,a)$ is an induced $4$-cycle of
$H_\beta$, contradicting the induction hypothesis.

If $K = H_{<\alpha}$, we are done. Otherwise, let $v$ be the unique
vertex of $V(G)$ such that $V(H_\alpha) = V(H_{<\alpha}) \cup \{v\}$.
Suppose by way of contradiction that $H_{\alpha}$ contains an induced
$4$-cycle $C$. Then necessarily $v$ belongs to $C$. Let $C=(v,a,b,c,v).$
Since by Lemma \ref{p:gate1} the $2$-paths of $H_{\alpha}$ are $K$-fanned,
the simple $2$-path $(a,b,c)$ of $C$ is fanned and we obtain a
contradiction with Lemma \ref{l:cycle}. Consequently, $H_\alpha$ does
not contain induced $4$-cycles.
\end{proof}

\begin{lemma} \label{p:gate3}
$K$ is the gated hull of $T$ in $G$.
\end{lemma}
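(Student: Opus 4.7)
The plan is to prove the two inclusions separately, letting $\Gamma$ denote the gated hull of $T$ in $G$. For the inclusion $K \subseteq \Gamma$, I argue by induction on $i$ that $H_i \subseteq \Gamma$, using that $\Gamma$, being gated, is in particular convex. The base case $H_3 = T$ is immediate. For the inductive step, the vertex $v$ adjoined to form $H_{i+1}$ has, by construction, at least two neighbors $a,b \in H_i \subseteq \Gamma$. If $a \sim b$, any gate $v'$ of $v$ in $\Gamma$ must lie in $I(v,a) \cap I(v,b) \subseteq \{v,a\}\cap\{v,b\} = \{v\}$ (since $a \neq b$), whence $v = v' \in \Gamma$. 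If $a \not\sim b$, then $d(a,b)=2$ with $v$ on a shortest $(a,b)$-path, so $v \in I(a,b) \subseteq \Gamma$ by convexity.

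For the reverse inclusion $\Gamma \subseteq K$, I will show $K$ is itself gated, so that minimality of $\Gamma$ forces $\Gamma \subseteq K$. First, an easy induction shows that the twin-balls are monotone, $TB_t(T) \subseteq TB_{t+1}(T)$: every vertex of $TB_t(T)$ already has two neighbors in $TB_{t-1}(T) \subseteq TB_t(T)$. Consequently, $K = \bigcup_{t\geq 0} TB_t(T)$ enjoys the closure property that any vertex of $G$ with at least two neighbors in $K$ already belongs to $K$. Given $x \notin K$, choose $x^* \in K$ minimizing $d(x,x^*)$, and suppose by contradiction that some $y \in K$ satisfies $x^* \notin I(x,y)$; among such $y$, pick one minimizing $d(x^*,y)$, which is then at least $2$. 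Applying the triangle or quadrangle condition at $x$ to suitable vertices on shortest $(x,x^*)$- and $(x,y)$-paths produces a vertex $z$ that is a common neighbor of two vertices of $K$; by the closure property, $z \in K$, and $z$ contradicts either the minimality of $d(x,x^*)$ or that of $d(x^*,y)$. Here Lemma~\ref{p:gate1} (fannedness of 2-walks inside $K$) and Lemma~\ref{p:gate2} (no induced 4-cycles in $K$) are essential for ruling out the degenerate configurations that arise in the case analysis.

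The principal obstacle is the second step: organizing the case distinction so that each application of the weak-modularity axioms yields a vertex $z$ landing on two neighbors already in $K$, letting the closure property place $z$ in $K$, while simultaneously delivering the strictly improved distance needed for the contradiction. The structural facts provided by Lemmas~\ref{p:gate1} and~\ref{p:gate2} are precisely what allow this, since together with the closure property they reproduce inside $K$ enough of the weakly-systolic geometry for extracting a gate to succeed.
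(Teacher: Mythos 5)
Your first inclusion ($K$ is contained in the gated hull $\Gamma$) and your derivation of the closure property (any vertex with at least two neighbors in $K$ lies in $K$, via monotonicity of the twin-balls) are correct and agree with the paper. The gap is in the second half. What you actually need there is the statement that in a weakly modular graph a connected, locally convex, $\Delta$-closed subgraph is gated; this is a genuine lemma, and you neither invoke it nor prove it. The paper closes exactly this step by citing \cite{BaCh_weak}. Your substitute is a double-minimization sketch (``apply the triangle or quadrangle condition to suitable vertices on shortest $(x,x^*)$- and $(x,y)$-paths to produce $z$\dots''), and you explicitly defer the resulting case analysis as ``the principal obstacle''; as written this is a plan, not an argument. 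Even the preliminary assertion that a minimal counterexample $y$ satisfies $d(x^*,y)\ge 2$ is not automatic: ruling out $d(x^*,y)=1$ already requires an application of TC$(x)$ combined with $\Delta$-closedness, and the general step is not dispatched by a single application of TC or QC -- one has to, for instance, first show that the metric projection of $x$ onto $K$ is a single vertex and then run an induction on $d(x^*,y)$ (in effect reproving the local-to-global gatedness criterion of \cite{BaCh_weak}), which is precisely the work that is missing.

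A secondary point: your claim that Lemmas \ref{p:gate1} and \ref{p:gate2} are essential for extracting the gate is misplaced. Gatedness of $K$ uses only weak modularity of the ambient graph $G$ together with the closure property of $K$ (which gives both $\Delta$-closedness and local convexity); fannedness of $2$-walks and the absence of induced $4$-cycles play no role in the proof of Lemma \ref{p:gate3} -- they are used afterwards, in Proposition \ref{gated_hull}, to conclude that the gated hull $K$ is weakly bridged. So either quote the criterion from \cite{BaCh_weak} or carry out the full case analysis; in its present form the inclusion $\Gamma\subseteq K$ is not established.
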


\begin{proof}
Let $A$ be the gated hull of $T$. First we prove that all vertices
of $K$ belong to $A$. Suppose by way of contradiction that
$K\setminus A \neq \emptyset.$ From all vertices in $K \setminus A$
we choose $v$ with smallest $\alpha$, such that $v \notin H_{<\alpha}$,
$v \in H_\alpha$, i.e., all vertices from $H_{<\alpha}$ are contained
in $A$. Since $v \in H_\alpha,$ it has at least two neighbors in $H_{< \alpha}$
and thus in $A$. Therefore there is no gate of $v$ in $A$, a contradiction.

On the other hand, since $G$ is weakly modular, $K$ is gated if
and only if for every $x,y\in K$ at distance at most $2$, any common
neighbor $v$ of $x$ and $y$ also belongs to $K$ \cite{BaCh_weak,Ch_triangle}.
This is obviously true by the definition of $K$.
\end{proof}

Summarizing, we obtain the main result of this subsection.

\begin{proposition} \label{gated_hull} Let $G$ be a locally-finite weakly
modular graph not containing induced
$W_4$ and $W_4^-$. Then the gated hull of any triangle $T$ of $G$ is a
2-connected weakly bridged graph. Additionally, if
$G$ does not contain induced $W_5,$ then the gated hull of  $T$ is a 2-connected
bridged
graph.
\end{proposition}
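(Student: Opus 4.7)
The plan is to put together what Lemmas~\ref{p:gate1}, \ref{p:gate2}, and \ref{p:gate3} have already established about the subgraph $K=\bigcup_i H_i$: it is the gated hull of the triangle $T$, and no $H_i$ contains an induced $4$-cycle. Since any induced $4$-cycle of $K$ lies in some $H_i$ (it has only four vertices and the union is monotone), $K$ itself is $C_4$-free. It then remains to verify that $K$ is 2-connected and weakly modular, which together with $C_4$-freeness will make $K$ weakly bridged; and, under the additional hypothesis that $G$ is $W_5$-free, that $K$ is also $C_5$-free, which will upgrade weakly bridged to bridged.

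The 2-connectedness I would obtain by induction along the construction of $K$. The base $H_3=T$ is a triangle, hence 2-connected. For the inductive step $H_i\to H_{i+1}=H_i\cup\{v\}$, the vertex $v$ has at least two distinct neighbors $a,b\in H_i$, so deleting any vertex $w\neq v$ from $H_{i+1}$ still leaves $v$ joined to the connected graph $H_i\setminus\{w\}$ through whichever of $a,b$ differs from $w$; deleting $v$ itself returns $H_i$. Passing from the $H_i$'s to $K$: any three vertices $x,y,w$ of $K$ lie in some $H_i$ by monotonicity, and the 2-connectedness of that $H_i$ yields the required $(x,y)$-path in $K\setminus\{w\}$.

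Weakly modularity of $K$ follows from gatedness. Since $K$ is gated in $G$, it is both isometrically embedded and convex. For any $u,v,w\in K$ satisfying the hypotheses of TC($u$) (respectively, QC($u$)) in $G$, the common neighbor $x$ asserted by weakly modularity of $G$ lies on a shortest path from $u$ to $v$, hence inside the convex set $K$. Therefore TC and QC both hold within $K$, proving $K$ weakly modular; combined with the $C_4$-freeness above, $K$ is weakly bridged.

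For the additional $W_5$-free case I would show that $K$ contains no induced $C_5$. Suppose, toward contradiction, that $C=(x_1,x_2,x_3,x_4,x_5,x_1)$ is an induced $5$-cycle in $K$. Since $K$ is isometric in $G$ and $x_3,x_4$ sit at distance $2$ from $x_1$, applying TC($x_1$) in $G$ yields a common neighbor $y$ of $x_3,x_4$ with $y\sim x_1$; convexity places $y\in I(x_1,x_3)\subseteq K$. A short case analysis on the adjacencies of $y$ with $x_2$ and $x_5$ finishes the job: if $y$ is adjacent to both, then $\{y,x_1,\ldots,x_5\}$ induces a $W_5$ in $G$, contradicting the hypothesis; and in each of the three remaining adjacency patterns one of the cycles $(x_5,x_1,y,x_4)$, $(x_2,x_1,y,x_3)$, or $(x_1,y,x_4,x_5)$ turns out to be an induced $4$-cycle inside $K$, contradicting Lemma~\ref{p:gate2}. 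Thus $K$ is also $C_5$-free, and a weakly modular graph without induced $C_4$ or $C_5$ is known to be bridged. I expect the main obstacle to be precisely this last step, but it reduces to the bookkeeping case-analysis above once one notices that the TC-produced vertex $y$ is forced to live inside the gated set $K$.
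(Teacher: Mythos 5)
Your proof is correct and follows essentially the same route as the paper: it rests on the construction $K=\bigcup_i H_i$ and Lemmas~\ref{p:gate1}--\ref{p:gate3}, derives weak modularity of $K$ from its convexity (gatedness) in $G$, and gets weak bridgedness from the absence of induced $4$-cycles; your explicit inductive check of $2$-connectedness is implicit in the paper's construction, where each $H_{i+1}$ adds a vertex with two neighbors in $H_i$. The only real divergence is the bridged case: the paper disposes of induced $5$-cycles in one line by citing the fact that in a weakly bridged graph every $5$-cycle is contained in a $5$-wheel (which would produce a forbidden $W_5$), whereas you reprove this from scratch by applying TC($x_1$) inside the convex set $K$ and ruling out each adjacency pattern of the resulting vertex $y$ via either a $W_5$ in $G$ or an induced $C_4$ in $K$, contradicting Lemma~\ref{p:gate2}. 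Both arguments are valid; yours is self-contained and elementary, while the paper's leans on the known structure theory of weakly bridged graphs from \cite{ChOs}.
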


\begin{proof} By Lemma \ref{p:gate3}, the gated hull of $T$ is the 2-connected
subgraph $K$ of $G$ constructed by our procedure.
Since $K$ is a convex subgraph of a weakly modular graph $G,$ $K$ itself is a
weakly modular graph. By Lemma \ref{p:gate2}, the
graph $K$ does not contain induced $4$-cycles, thus $K$ is weakly bridged
by \cite[Theorem 3.1(iv)]{ChOs}. If, additionally,
$G$ does not contain $5$-wheels, then $G$ does not contain induced $5$-cycles
because in a weakly bridged graph any induced $5$-cycle is included in
a $5$-wheel. Then $K$ is a weakly modular graph without induced $4$- and $5$-cycles,
thus $K$ is bridged.
\end{proof}

\subsection{Proof of Theorem \ref{theorem1}}
\label{STh1}

We first prove the implication (i)$\Rightarrow$(ii). First, bridged
and weakly bridged graphs are weakly modular.  Weakly bridged graphs
do not contain induced $K_{2,3},$ $W_4,$ and $W^-_4$ because they do
not contain induced 4-cycles. Bridged graphs additionally do not
contain induced $W_5.$ Weakly modular graphs are closed by taking
(weak) Cartesian products (this holds also when there are infinite
number of factors in weak Cartesian products, since the distances
between vertices in a weak Cartesian product are finite).  If a (weak)
Cartesian product $\Box_{i\in I} H_i$ contains an induced
$K_{2,3},W_4,W_5$ or $W_4^-,$ then necessarily this graph occurs in
one of the factors $H_i$. This follows from the fact that in a
product, each triangle comes from one factor, and two opposite edges
of a square must come from the same factor.  As a consequence,
Cartesian products $H=\Box_{i\in I} H_i$ of weakly bridged graphs do
not contain induced $K_{2,3},W_4,$ and $W_4^-.$ Analogously, Cartesian
products $H=\Box_{i\in I} H_i$ of bridged graphs do not contain
induced $K_{2,3},W_4,W^-_4,$ and $W_5.$ If $G$ is a retract of $H,$
then $G$ is an isometric subgraph of $H$, and therefore $G$ does not
contain induced $K_{2,3},W_4,W_4^-$ in the first case and induced
$K_{2,3},W_4,W_4^-$ and $W_5$ in the second case.  It remains to
notice that the triangle and quadrangle conditions are preserved by
retractions, thus $G$ is a weakly modular graph, establishing that
(i)$\Rightarrow$(ii).

Now suppose that $G$ is a weakly modular graph satisfying the
condition (ii) of Theorem \ref{theorem1}. Then $G$
is a pre-median graph. By \cite[Theorem 4.13]{Cha1}, any pre-median
graph is fiber-complemented. By \cite[Lemma 4.8]{Cha1}, this implies
that any gated subgraph $H$ of $G$ is elementary if and only if it is
prime.
Note that the gated hull of any edge in $G$ is either the edge itself,
or it is included in a triangle by weak modularity, and by
Proposition~\ref{gated_hull} we find that the gated hull of this edge is
a 2-connected (weakly) bridged graph.  Hence every elementary (= prime) gated subgraph is a 2-connected (weakly) bridged graph or an edge. This
establishes the implication (ii)$\Rightarrow$(iii).

To prove the implication (iii)$\Rightarrow$(i), we will use
\cite[Theorem 3.2.1]{Cha2} and \cite[Theorem 5.1]{ChOs}. By Chastand
\cite[Theorem 3.2.1]{Cha2}, any fiber-complemented graph $G$ whose
primes are moorable graphs is a retract of the Cartesian product of
its primes. Note that elementary gated subgraphs of $G$, enjoying (iii), are
edges and 2-connected weakly bridged graphs. In~\cite[Theorem~5.1]{ChOs},
it is shown that locally-finite weakly bridged graphs are
moorable. Proposition~\ref{prop-moorable} in Section \ref{moorability} extends this result to
non-locally-finite graphs. Thus, by \cite[Theorem 3.2.1]{Cha2} $G$
is a retract of the
Cartesian product of its primes, establishing the implication
(iii)$\Rightarrow$(i) of Theorem \ref{theorem1}.

Now, for finite graphs we show that (iv)$\iff$(ii). As noticed above,
bridged and weakly bridged graphs are weakly modular and do not
contain induced $K_{2,3}$, $W_4,$ and $W^-_4$. Bridged graphs
additionally do not contain induced $W_5$.  Weakly modular graphs are
closed by Cartesian products and gated amalgams. Moreover, if $G$ is
the Cartesian product or the gated amalgam of two graphs $G_1$ and
$G_2$, then $G$ contains an induced $K_{2,3}$ (respectively,  $W_4, W_4^-,
W_5$) if and only if $G_1$ or $G_2$ does. Therefore
(iv)$\Rightarrow$(ii).  Conversely, suppose that $G$ is a finite
bucolic (respectively, strongly bucolic) graph.  Then $G$ is a pre-median
graph. By \cite[Theorem 4.13]{Cha1}, any pre-median graph is
fiber-complemented. Then according to \cite[Theorem 5.4]{Cha1}, $G$
can be obtained from Cartesian products of elementary (=prime) graphs
by a sequence of gated amalgamations.  By Proposition
\ref{gated_hull}, any elementary graph of $G$ is either an edge or a
2-connected weakly bridged graph (respectively, a 2-connected bridged
graph). Thus the implication (ii)$\Rightarrow$(iv) in Theorem
\ref{theorem1} holds.  This concludes the proof of Theorem
\ref{theorem1}.

\section{Bucolic complexes and their skeleta}
\label{proof_triangle-square-complex_bridged}

In this section, we prove the following local-to-global characterization of bucolic complexes  via properties of their
1- and 2-skeleta:

\thcomplexes*

As an immediate corollary we obtain the following analogous characterization of strongly bucolic complexes:

\begin{corollary}\label{cor-strong-buc-complexes} For a prism complex $\bX$, the following conditions are equivalent:

\begin{itemize}
\item[(i)] $\bX$ is a strongly bucolic complex;
\item[(ii)] the $2$-skeleton $\bX^{(2)}$ of $\bX$ is a connected and simply connected triangle-square flag
complex satisfying the strong-wheel, the 3-cube, and the 3-prism conditions;
\item[(iii)] the $1$-skeleton $G(\bX)$ of $\bX$ is a connected weakly modular graph not containing induced subgraphs of the form
  $K_{2,3},$ $W_4,$ $W_4^-,$ and $W_5$, i.e., $G(\bX)$ is a
  strongly bucolic graph not containing infinite hypercubes as induced
  subgraphs;
\end{itemize}
Moreover, if $\bf X$ is a connected flag prism complex satisfying the strong-wheel, the cube, and the prism conditions,
 then the universal cover $\widetilde{\bf X}$ of $\bf X$ is strongly bucolic.
\end{corollary}

\subsection{Auxiliary results.} We start this section with several auxiliary properties of triangle-square
flag complexes occurring in condition (ii) of Theorem \ref{theorem0}. Throughout this and the next subsections,
we will denote such triangle-square complexes by $\bX$, assume that they are connected, and
use the shorthand $G:=G({\bX})$ for the 1-skeleton of $\bX$.
We denote by ${\bX}(C_3)$ and ${\bX}(C_4)$
the triangle-square complex consisting of a single triangle and a single square,
respectively.
Let ${\bX}(H)={\bX}(C_3+C_4)$ be the complex consisting of a
triangle and a square sharing one edge; its graph is the house $H$ and
with some abuse of notation, we call the complex itself a {\it
  house}. The {\it twin-house} ${\bX}(2H)$ is the complex consisting
of two triangles and two squares, which can be viewed as two houses
glued along two incident edges or as a domino and a kite glued along
two incident edges (for an illustration, see Fig.~\ref{fig-houses}, left).
Let also ${\bf  X}(W_k)$ and ${\bX}(W^-_k)$ be the triangle-square complexes
whose
underlying graphs are $W_k$ and $W^-_k$: the first consists of $k$
triangles and the second consists of $k-2$ triangles and one square.
The complex ${\bX}(CW_3)$ consists of three squares sharing a vertex
and pairwise sharing edges (its graph is the cogwheel $CW_3$). The
{\it triangular prism} ${\bX}(Pr)={\bX}(C_3\times K_2)$ consists
of the surface complex of the 3-dimensional triangular prism (two
disjoint triangles and three squares pairwise sharing an edge). The
{\it double prism} ${\bX}(2Pr)$ consists of two prisms ${\bX}(Pr)$
sharing a square (See Fig.~\ref{fig-houses}, left).
Finally, the {\it double-house} ${\bX}(H+C_4)={\bX}(2C_4+C_3)$ is
the complex consisting of two squares and a triangle, which can be
viewed as a house plus a square sharing with the house two incident
edges, one from the square and another from the triangle (see
Fig.~\ref{fig-houses}, right). In the following results, we
use the notation $G=G(\bX)$.

\begin{figure}[t]
\begin{center}
\includegraphics{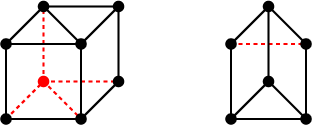}%
\end{center}
\caption{On the left, a twin-house (in black)  included
 in a double prism (Lemma~\ref{house+K2,3}). On the right, a
 double house (in black) included in a prism
 (Lemma~\ref{no-double-house}).}
\label{fig-houses}
\end{figure}

\begin{lemma} \label{K_2,3+W^-_4} If ${\bX}$ is a triangle-square flag complex,
then its 1-skeleton $G$ does not contain induced $K_{2,3}$ and $W^-_4$.
\end{lemma}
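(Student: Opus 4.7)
The plan is to argue by contradiction in each case by exhibiting two induced 4-cycles that, via flagness, would force two square 2-cells of $\bX$ to meet in a path of length 2, violating the cell complex axiom that two cells meet in a common face.

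First I would rule out $K_{2,3}$. Assume $G$ contains an induced $K_{2,3}$ with parts $\{a,b\}$ and $\{x,y,z\}$. Then the 4-cycles $C_1=(a,x,b,y,a)$ and $C_2=(a,y,b,z,a)$ are both induced (since $a\nsim b$ and $x,y,z$ are pairwise nonadjacent). By the flag condition, both $C_1$ and $C_2$ are square 2-cells of $\bX$. However $C_1\cap C_2$ contains the two edges $ay$ and $yb$, which together form a path of length 2 through the vertex $y$; this path is not a face of either square (the faces of a square are its vertices, its four edges, and the square itself). This contradicts the cell-complex condition that the intersection of two cells be a common face of each.

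Next I would handle $W_4^-$. Denote the outer 4-cycle by $(x_1,x_2,x_3,x_4,x_1)$ and the central vertex by $c$, with the missing spoke $cx_1$ (so $c\sim x_2,x_3,x_4$ and $c\nsim x_1$). The 4-cycle $C_1=(x_1,x_2,x_3,x_4,x_1)$ is induced (the diagonals $x_1x_3$ and $x_2x_4$ are absent, else we would have a spoke from $c$ to an adjacent pair forming additional structure — more directly, $W_4^-$ being induced means there are no other edges). The 4-cycle $C_2=(x_1,x_2,c,x_4,x_1)$ is induced because $x_1\nsim c$ and $x_2\nsim x_4$. Flagness again promotes both $C_1$ and $C_2$ to square cells, and their intersection contains the two adjacent edges $x_4x_1$ and $x_1x_2$, forming a length-2 path through $x_1$, which is not a face of either square — contradiction.

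The main (very small) obstacle is just to verify in each case that the relevant 4-cycles are genuinely \emph{induced} (so that they indeed correspond to square cells and not to something else), which follows directly from the definitions of $K_{2,3}$ and $W_4^-$; no further machinery is required.
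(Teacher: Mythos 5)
Your proposal is correct and is essentially the paper's own argument: the paper notes that an induced $K_{2,3}$ or $W_4^-$ would, by flagness, yield two square cells intersecting in two edges, which violates the cell-complex axiom — exactly the contradiction you spell out with your explicit choices of induced 4-cycles.
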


\begin{proof} If $G$ contains $K_{2,3}$ or $W^-_4,$ then, since $\bX$ is a
flag complex, we will obtain two squares intersecting in two edges, which is
impossible.
\end{proof}

\begin{lemma} \label{house+K2,3} If ${\bX}$ satisfies the 3-prism
  condition, then any twin-house ${\bX}(2H)$ of ${\bX}$ is
  included in ${\bX}$ in a double prism ${\bX}(2Pr)$.
\end{lemma}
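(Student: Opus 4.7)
My plan is to apply the house condition to each of the two houses embedded in the twin-house, obtaining two triangular prisms, and then to argue, using the forbidden-subgraph conditions on $G(\bX)$, that these prisms share a common square and therefore assemble into a double prism.

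I set up the twin-house by labelling its two triangles as $T_1=(a,b,c)$ and $T_2=(a,b,d)$ sharing the edge $ab$, and its two squares as $S_1=(a,a',c',c)$ and $S_2=(a,a',d',d)$ sharing the edge $aa'$ (the two shared edges being incident at $a$). The two houses are then $H_1=T_1\cup S_1$ (sharing the edge $ac$) and $H_2=T_2\cup S_2$ (sharing $ad$). Applying the house condition to each, $H_1$ is contained in a prism $\Pi_1$ of $\bX$ whose second triangle is of the form $(a',b_1,c')$ for some vertex $b_1$ with $b_1\sim b,a',c'$, and whose two additional squares are $(a,b,b_1,a')$ and $(b,c,c',b_1)$; similarly $H_2$ is contained in a prism $\Pi_2$ with second triangle $(a',b_2,d')$, $b_2\sim b,a',d'$, and additional squares $(a,b,b_2,a')$ and $(b,d,d',b_2)$.

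The crux of the argument is to prove $b_1=b_2$, and this is the main obstacle. Both $b_1$ and $b_2$ lie in $N(b)\cap N(a')$, and each of the 4-cycles $(a,b,b_i,a')$ is induced, so $b_i\nsim a$ and $b\nsim a'$. Suppose towards a contradiction that $b_1\ne b_2$. If $b_1\sim b_2$, then $b_2$ is adjacent to exactly the three consecutive vertices $b,b_1,a'$ of the induced 4-cycle $(a,b,b_1,a')$ and not to $a$, so $\{a,b,b_1,a',b_2\}$ induces an almost-wheel $W_4^-$. If $b_1\nsim b_2$, then the three pairwise non-adjacent vertices $a,b_1,b_2$ are each adjacent to the two non-adjacent vertices $b,a'$, so $\{a,b_1,b_2,b,a'\}$ induces a $K_{2,3}$. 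Either case contradicts Lemma~\ref{K_2,3+W^-_4}, and therefore $b_1=b_2=:b'$. Consequently $\Pi_1$ and $\Pi_2$ share the square $(a,b,b',a')$, and $\Pi_1\cup\Pi_2$ is a double prism $\bX(2Pr)$ of $\bX$ containing the twin-house, as desired; once $b_1=b_2$ has been established, the double-prism structure is read off directly by inspection of the cells listed above.
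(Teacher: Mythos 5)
Your proof is correct and follows essentially the same route as the paper's: apply the house condition to each of the two constituent houses, then use the absence of induced $K_{2,3}$ and $W_4^-$ (Lemma~\ref{K_2,3+W^-_4}) on the five vertices consisting of the shared edges' endpoints and the two new apexes to force the apexes to coincide, yielding two prisms glued along the common square. The paper's argument is the same identification step, just written with different vertex labels.
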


\begin{proof} Let $u,v,w,x_1,x_2$ be the vertices of one house and
  $u,v,w,y_1,y_2$ be the vertices of another house, where the edge
  $uv$ is common to the two squares $uvx_2x_1$ and $uvy_2y_1$, and
  where the edge $vw$ is common to the two triangles $vwx_2$ and
  $vwy_2$. By the 3-prism condition, there exists a vertex $a$ adjacent in
  $G$ to $x_1, u, w$ that is not adjacent to
  $x_2,v$. Analogously, there exists a vertex $b$ adjacent to $u, y_1,
  w$ that is not adjacent to $y_2,v$. If $a \neq b$, the graph induced
  by $a,b,u,v,w$ is either $K_{2,3}$ if $a \nsim b$, or $W_4^-$
  otherwise; in both cases, we get a contradiction with
  Lemma~\ref{K_2,3+W^-_4}.  Thus $a=b$, and since $a \nsim v,x_2,y_2$,
  the vertices $a,u,v,w,x_1,x_2,y_1,y_2$ induce a double prism.
\end{proof}

\begin{lemma} \label{no-double-house} If ${\bX}$ satisfies the 3-prism
  condition, then any double-house $\bX(H+C_4)$ in $\bX$ is included
  in a prism $\bX(Pr)$, i.e., $G$ does not contain an induced
  double-house $H+C_4$.
\end{lemma}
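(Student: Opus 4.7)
The plan is to apply the house condition once to an appropriate sub-house of $\bX(H+C_4)$ and then to show that the sixth vertex produced by the resulting prism must coincide with a vertex already present in the double-house.

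First I would set up coordinates: write the double-house as the union of a triangle $T = u_1 u_2 u_3$ with two squares $S_1 = u_1 u_2 v_2 v_1$ and $S_2 = u_2 u_3 v_3 v_2$, where $T$ shares the edge $u_1 u_2$ with $S_1$ and the edge $u_2 u_3$ with $S_2$, and where $S_1$ and $S_2$ share the edge $u_2 v_2$. Since $\bX(H+C_4)$ is induced, every pair of the six vertices not listed as an edge is a non-edge; in particular $u_1 \nsim v_2$ and $u_2 \nsim v_1$ (diagonals of $S_1$), $u_3 \nsim v_2$ and $u_2 \nsim v_3$ (diagonals of $S_2$), together with $v_1 \nsim v_3$, $u_1 \nsim v_3$, $u_3 \nsim v_1$.

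Next I would apply the house condition to the house $T \cup S_2$, which is well-defined because $T$ and $S_2$ share $u_2 u_3$. This embeds $T \cup S_2$ into a prism $\bX(Pr)$, which introduces a new vertex $y$ with $y \sim u_1, v_2, v_3$ and $y \nsim u_2, u_3$. If I can show $y = v_1$, then the prism has vertex set exactly $\{u_1, u_2, u_3, v_1, v_2, v_3\}$ and its remaining faces (the triangle $v_1 v_2 v_3$ and the square $u_1 u_3 v_3 v_1$) complete $\bX(H+C_4)$ into the desired prism.

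The main obstacle is verifying $y = v_1$, and I would handle it by contradiction. Assume $y \neq v_1$ and look at the induced subgraph on $\{u_1, u_2, v_2, v_1, y\}$: the three vertices $u_2, v_1, y$ are each adjacent to both $u_1$ and $v_2$, while the non-edges $u_1 \nsim v_2$, $u_2 \nsim v_1$, and $u_2 \nsim y$ are already secured. Only the adjacency $v_1 \sim y$ remains to be resolved. If $v_1 \nsim y$, the subgraph is exactly $K_{2,3}$ with sides $\{u_1, v_2\}$ and $\{u_2, v_1, y\}$; if $v_1 \sim y$, then $u_1 u_2 v_2 y$ is an induced $4$-cycle and $v_1$ is adjacent to three of its vertices but not to $u_2$, so the subgraph is exactly $W_4^-$. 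In both cases Lemma~\ref{K_2,3+W^-_4} is violated, forcing $y = v_1$ and completing the argument.
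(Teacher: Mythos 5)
Your proof is correct and is essentially the paper's argument: up to the symmetry exchanging the two squares, the paper likewise applies the house condition to a single triangle-plus-square house and then invokes Lemma~\ref{K_2,3+W^-_4} on the five vertices consisting of the other square together with the new prism vertex, the one undetermined adjacency (to the far corner of that square) yielding $K_{2,3}$ or $W_4^-$. The only difference is packaging: the paper notes that the new vertex differs from that far corner (via the non-edge you call $v_1\nsim v_3$) and ends in an outright contradiction with inducedness, whereas you conclude $y=v_1$ and read off the prism --- note that under your standing inducedness assumption the equality $y=v_1$ itself already contradicts $v_1\nsim v_3$ (since $y\sim v_3$), which is exactly the ``no induced double-house'' form of the statement.
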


\begin{proof} Suppose by contradiction that $G$ contains
  an induced double-house having $x,y,u,v,w,z$ as the set of vertices,
  where $uvw$ is a triangle and $xyvu$ and $xuwz$ are two squares of
  this house. By 3-prism condition, there exists a vertex $a$ different
  from $z$ (since $y \nsim z$) that is adjacent to $x,y,w$ and that
  is not adjacent to $u,v$.  Thus, the vertices $z,a,w,u,x$ induce
  either $K_{2,3}$ if $a \nsim z$ or $W_4^-$ otherwise. In both
  cases, we get a contradiction with Lemma~\ref{K_2,3+W^-_4}.
\end{proof}

\begin{lemma} \label{W__k} If ${\bX}$ satisfies the 3-prism condition and
does not contain ${\bX}(W_4)$, then ${\bX}$ does not contain
${\bX}(W^-_k)$ for any $k\ge 5.$
\end{lemma}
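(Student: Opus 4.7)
The plan is to argue by minimum counterexample. Suppose $\bX$ contains an induced $\bX(W^-_k)$ with outer cycle $(x_1,\ldots,x_k,x_1)$ and near-center $c$ (so $c\sim x_2,\ldots,x_k$ and $c\nsim x_1$), and let $k\ge 5$ be the smallest such integer. The engine of the argument is the house condition applied to the house formed by the triangle $cx_{k-1}x_k$ and the square $cx_2x_1x_k$, which share the edge $cx_k$. It produces a triangular prism $\bX(Pr)$ in $\bX$ whose two parallel triangles are $\{c,x_k,x_{k-1}\}$ and $\{x_2,x_1,w'\}$ for some vertex $w'$ of $\bX$, and whose two additional squares $cx_{k-1}w'x_2$ and $x_kx_{k-1}w'x_1$ must be induced $4$-cycles in the flag complex. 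Using $x_{k-1}\nsim x_1,x_2$ (valid for $k\ge 5$), I read off $w'\sim x_1,x_2,x_{k-1}$ and $w'\nsim c,x_k$.

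For the base case $k=5$ I would split on whether $w'\sim x_3$. If yes, then $x_3$ is adjacent to all four vertices of the induced $4$-cycle $(c,x_2,w',x_4)$ (the non-adjacencies $c\nsim w'$ and $x_2\nsim x_4$ being at hand), so $\{x_3,c,x_2,w',x_4\}$ induces a $W_4$, contradicting the hypothesis that $\bX$ contains no $\bX(W_4)$. If no, then $(w',x_2,x_3,x_4,w')$ is an induced $4$-cycle while $c$ is adjacent to exactly three of its four vertices, so $\{c,w',x_2,x_3,x_4\}$ induces a $W_4^-$, contradicting Lemma \ref{K_2,3+W^-_4}.

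The heart of the inductive step $k\ge 6$ is to show that $w'$ has no neighbor among $x_3,\ldots,x_{k-2}$. If $w'\sim x_i$ for some $i\in\{3,k-2\}$, then $x_i$ is adjacent to exactly three vertices of the induced $4$-cycle $(c,x_2,w',x_{k-1})$ (using $x_3\nsim x_{k-1}$, respectively $x_{k-2}\nsim x_2$, both valid for $k\ge 6$), producing an induced $W_4^-$ on $\{x_i,c,x_2,w',x_{k-1}\}$. If $w'\sim x_i$ for some $i\in\{4,\ldots,k-3\}$ (a range that is empty for $k=6$ and nonempty only for $k\ge 7$), then $x_i\nsim x_2,x_{k-1}$, and together with $c\nsim w'$ and $x_2\nsim x_{k-1}$ the set $\{c,w',x_2,x_i,x_{k-1}\}$ induces $K_{2,3}$ with bipartition $\{c,w'\}\sqcup\{x_2,x_i,x_{k-1}\}$. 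Either outcome contradicts Lemma \ref{K_2,3+W^-_4}. Consequently $(w',x_2,x_3,\ldots,x_{k-1},w')$ is an induced $(k-1)$-cycle whose non-$w'$ vertices are all adjacent to $c$ while $c\nsim w'$, so $\{c,w',x_2,\ldots,x_{k-1}\}$ induces $W^-_{k-1}$, contradicting the minimality of $k$.

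The main obstacle I anticipate is the case analysis governing when a neighbor $w'\sim x_i$ produces a $W_4^-$ (boundary indices $i=3,k-2$) versus a $K_{2,3}$ (interior indices $4\le i\le k-3$); this requires tracking the non-adjacencies on the outer cycle carefully as a function of $k$. Once this bookkeeping is recorded, reading off the induced $W^-_{k-1}$ and invoking minimality closes the argument cleanly.
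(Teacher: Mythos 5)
Your proof is correct and follows essentially the same route as the paper's: a minimal counterexample, the house condition applied to (the mirror image of) the same house to produce the extra prism vertex with exactly the adjacencies you list, and a contradiction via smaller forbidden wheels using Lemma~\ref{K_2,3+W^-_4}. The only difference is bookkeeping: the paper lets the new vertex have neighbors on the path and descends directly to $W_j^-$ for the smallest $j>3$ with $p\sim x_j$, whereas you first rule out all adjacencies to $x_3,\dots,x_{k-2}$ (adding a $K_{2,3}$ case for interior indices) and descend only to $W_{k-1}^-$, handling $k=5$ separately.
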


\begin{proof} Suppose by way of contradiction that ${\bX}$ contains
  ${\bX}(W^-_k)$, where $k$ is the smallest value for which this
  subcomplex exists. Since, by Lemma \ref{K_2,3+W^-_4}, $G$ does not contain
$W^-_4,$
  necessarily $k\ge 5$. Denote the vertices of $\bX(W_k^-)$ by
  $q,x_1,x_2,\ldots,x_k$ where $x_1,x_2, \ldots, x_k$ induce a cycle
  and where $q$ is adjacent to $x_1, \ldots, x _{k-1}$ but not to
  $x_k$. By the 3-prism condition applied to the house induced by
  $q,x_{k-1},x_k,x_1,x_2$, there exists $p$ in $G$ such that $p
  \sim x_{k-1},x_k,x_2$ and $p \nsim q,x_1$. If $p \sim x_3$, then the
  vertices $x_3,p,x_2,q,x_{k-1}$ induce $W_4$ if $x_3 \sim x_{k-1}$
  (i.e., if $k=5$), or $W_4^-$ otherwise; in both cases, we get a
  contradiction. Thus $p \nsim x_3$. Let $j$ be the smallest index
  greater than $3$ such that $p\sim x_j$. Since $p \sim x_{k-1}$, $j$
  is well-defined. But then, the vertices $q,p,x_2, \ldots, x_j$
  induce $W_j^-$ with $j<k$, contradicting the choice of $k$.
\end{proof}

\begin{lemma} \label{simplyconnected} Let ${\bf X}$ be a
  triangle-square flag complex such that $G({\bf X})$ satisfies the
  triangle and the quadrangle conditions TC$(v)$ and QC$(v)$, for some
  basepoint $v$. Then ${\bf X}$ is simply connected.
\end{lemma}

\begin{proof}
A \emph{loop} in ${\bf X}$ is a sequence $(w_1,w_2,...,w_k,w_1)$ of
vertices of ${\bf X}$ consecutively joined by edges. To prove the
lemma it is enough to show that every loop in ${\bf X}$ can be freely
homotoped to a constant loop $v$.  By contradiction, let $A$ be the
set of loops in $G({\bf X})$, which are not freely homotopic to $v$,
and assume that $A$ is non-empty.  For a loop $C\in A$ let $r(C)$
denote the maximal distance $d(w,v)$ of a vertex $w\in C$ to the
basepoint $v$.  Clearly $r(C)\geq 2$ for any loop $C\in A$ (otherwise
$C$ would be null-homotopic). Let $B\subseteq A$ be the set of loops
$C$ with minimal $r(C)$ among loops in $A$. Let $r:= r(C)$ for some
$C\in B$. Let $D\subseteq B$ be the set of loops having minimal number
$e$ of edges in the $r$-sphere around $v$, i.e. with both endpoints at
distance $r$ from $v$. Further, let $E\subseteq D$ be the set of loops
with the minimal number $m$ of vertices at distance $r$ from $v$.

Consider a loop $C=(w_1,w_2,...,w_k,w_1)\in E$.
We  can assume without loss of generality that $d(w_2,v)=r$.
We distinguish two cases corresponding to the
triangle or quadrangle condition that we apply to them.
\medskip

\noindent \emph{Case 1: $d(w_1,v)=r$ or $d(w_3,v)=r$.}  Assume without
loss of generality that $d(w_1,v)=r$. Then, by the triangle condition
$TC(v)$, there exists a vertex $w\sim w_1,w_2$ with $d(w,v)=r-1$.
Observe that the loop $C'=(w_1,w,w_2,...,w_k,w_1)$ belongs to $B$ --
in ${\bf X}$ it is freely homotopic to $C$ by a homotopy going through
the triangle $ww_1w_2$. The number of edges of $C'$ lying on the
$r$-sphere around $v$ is less than $e$ (we removed the edge
$w_1w_2$). This contradicts the choice of the number $e$.
\medskip

\noindent \emph{Case 2: $d(w_1,v)=d(w_3,v)=r-1$.}
By the quadrangle condition $QC(v)$, there exists a vertex $w\sim w_1,w_3$ with $d(w,v)=r-2$.
Again, the loop $C'=(w_1,w,w_3,...,w_k,w_1)$ is freely homotopic to $C$ (via the square $w_1w_2w_3w$). Thus $C'$
belongs to $D$ and the number of its vertices at distance $r$ from $v$ is equal to $m-1$. This contradicts
the choice of the number $m$.
\medskip

In both cases above we get contradiction. It follows that the set $A$ is empty and hence the lemma is proved.
\end{proof}

\subsection{Proof of (i)$\Rightarrow$(ii).}
Since the cube condition implies the 3-cube condition and the prism condition
implies the 3-prism condition, if $\bX$ is a bucolic complex, then its 2-skeleton
$\bX^{(2)}$ satisfies the condition (ii), thus (i)$\Rightarrow$(ii).

\subsection{Proof of (ii)$\Rightarrow$(iii).}
To prove the implication (ii)$\Rightarrow$(iii) of Theorem \ref{theorem0},
from now on, we suppose that  ${\bX}$ is a connected (but not necessarily simply connected)
triangle-square flag complex
satisfying the wheel, the 3-prism, and the 3-cube conditions.
The most difficult part of the proof is to show that the 1-skeleton of  ${\bX}$
is weakly modular. To show this, we closely follow the proof method of a
local-to-global characterization of weakly systolic complexes provided by
Osajda \cite{Osajda} using the level-by-level construction of the universal
cover of ${\bX}$.

\subsubsection{Structure of the construction.}
\label{s:struct}

We construct the universal cover $\widetilde{{\bX}}$ of ${\bX}$ as an increasing
union $\bigcup_{i\ge 1} \widetilde{{\bX}}_i$ of triangle-square
complexes. The complexes $\widetilde{{\bX}}_i$ are in fact spanned by
concentric combinatorial balls $\widetilde{B}_i$ in $\widetilde{{\bX}}$.
The covering map $f$ is then the union $\bigcup_{i\ge 1} f_i,$ where
$f_i: \widetilde{{\bX}}_i\rightarrow {\bX}$ is a locally injective
cellular map such that $f_i|_{\widetilde{{\bX}}_j}=f_j$, for every $j\le i$. We
denote by $\widetilde{G}_i=G(\widetilde{{\bX}}_i)$ the underlying graph of
$\widetilde{{\bX}}_i$. We denote by $\tS_i$ the set of vertices
$\tB_i\setminus \tB_{i-1}$.

Pick any vertex $v$ of ${\bX}$ as the basepoint. Define $\widetilde{B}_0=\{
\widetilde{v}\}:=\{ v\}, \widetilde{B}_1:=B_1(v,G),$ and $f_1:=$Id$_{B_1(v,G)}.$
Let $\widetilde{{\bX}}_1$  be the triangle-square complex spanned by $B_1(v,G).$
Assume that, for $i\geq 1$,  we have constructed the vertex sets
$\widetilde{B}_1,\ldots,\widetilde{B}_i,$ and we have defined
the triangle-square complexes $\widetilde{{\bX}}_1\subseteq \cdots\subseteq \widetilde{{\bX}}_i$
(for any $1\le j<k\le i$ we have an identification map $\widetilde{{\bX}}_j\rightarrow \widetilde{{\bX}}_{k}$)
and the corresponding cellular maps $f_1,\ldots,f_i$ from
$\widetilde{{\bX}}_1,\ldots,\widetilde{{\bX}}_i,$ respectively,  to ${\bX}$ so
that the graph $\tG_i=G(\widetilde{\bX}_i)$ and the complex $\widetilde{\bX}_i$
satisfy the following conditions:

\begin{enumerate}[(A{$_i$})]
\item[(P$_i$)]
$B_j(\tv,\tG_i)=\widetilde{B}_j$ for any $j\le i$;
\item[(Q$_i$)]
$\widetilde{G}_i$ is weakly modular with respect to $\widetilde{v}$ (i.e.,
$\widetilde{G}_i$ satisfies the conditions TC($\widetilde{v}$) and
QC($\widetilde{v}$));
\item[(R$_i$)]
for any $\widetilde{u}\in \widetilde{B}_{i-1},$ $f_i$
defines an isomorphism between the subgraph of $\tG_i$ induced by
$B_1(\widetilde{u},\tG_i)$ and the subgraph of $G$ induced by
$B_1(f_i(\widetilde{u}),G)$;
\item[(S$_i$)]
for any $\widetilde{w},\widetilde{w}'\in \widetilde{B}_{i-1}$ such that the
vertices $w=f_i(\widetilde{w}),w'=f_i(\widetilde{w}')$ belong to a square
$ww'uu'$ of ${\bX}$, there exist $\widetilde{u},\widetilde{u}'\in
\widetilde{B}_i$ such that $f_i(\widetilde{u})=u, f_i(\widetilde{u}')=u'$ and
$\widetilde{w}\widetilde{w}'\widetilde{u}\widetilde{u}'$ is a square of
$\widetilde{{\bf
    X}}_i$.
\item[(T$_i$)]
 for any $\widetilde{w}\in \widetilde{S}_i:=\widetilde{B}_i\setminus
  \widetilde{B}_{i-1},$ $f_i$
  defines an isomorphism between the subgraphs of $\tG_i$ and of $G$ induced by
$B_1(\widetilde{w},\tG_i)$ and
  $f_i(B_1(\widetilde{w},\tG_i))$.
\end{enumerate}

 It can be easily checked that
$\widetilde{B}_1,\widetilde{G}_1,\widetilde{\bX}_1$ and $f_1$ satisfy the
conditions (P$_1$),(Q$_1$),(R$_1$),(S$_1$), and
(T$_1$). Now we construct the set $\widetilde{B}_{i+1},$ the graph
$\widetilde{G}_{i+1}$ having $\widetilde{B}_{i+1}$ as the vertex-set, the
triangle-square complex $\widetilde{{\bX}}_{i+1}$ having $\tG_{i+1}$ as its
1-skeleton,  and the map $f_{i+1}: \widetilde{{\bX}}_{i+1}\rightarrow {\bX}.$
Let
 $$Z=\{ (\widetilde{w},z): \widetilde{w} \in \widetilde{S}_i \mbox{ and } z\in
B_1(f_i(\widetilde{w}),G)\setminus f_i(B_1(\widetilde{w},\tG_i))\}.$$
On $Z$ we define a binary relation $\equiv$ by setting $(\widetilde{w},z)\equiv
(\widetilde{w}',z')$ if and only if $z=z'$  and one of the following two
conditions is satisfied:

\begin{itemize}
\item[(Z1)] $\widetilde{w}$ and $\widetilde{w}'$ are the same or adjacent in
$\widetilde{G}_i$ and $z\in B_1(f_i(\widetilde{w}),G)\cap
B_1(f_i(\widetilde{w}'), G);$
\item[(Z2)] there exists $\widetilde{u}\in \widetilde{B}_{i-1}$ adjacent in
$\widetilde{G}_i$ to $\widetilde{w}$ and $\widetilde{w}'$ and such that
$f_i(\widetilde{u})f_i(\widetilde{w})zf_i(\widetilde{w}')$ is a square-cell
of ${\bX}$.
\end{itemize}

In what follows, the above relation will be used in the inductive step to construct $\tX_{i+1}$, $f_{i+1}$ and all related objects.

\subsubsection{Definition of $\tG_{i+1}$.}
\label{s:def-i+1}
In this subsection, performing the inductive step, we define $\tG_{i+1}$ and $f_{i+1}$.
First however we show that the relation $\equiv$ defined in the previous subsection is an equivalence relation.
The set of vertices of the graph $\tG_{i+1}$ will be then defined as the union of the set of vertices of the previously
constructed graph $\tG_{i}$ and the set of equivalence classes of $\equiv$.

\begin{lemma} \label{equiv} The relation $\equiv$ is an equivalence relation on
$Z$.
\end{lemma}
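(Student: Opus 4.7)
The plan is to verify reflexivity, symmetry, and transitivity in turn; the first two are immediate, and the bulk of the work is a case analysis for transitivity.

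\textbf{Reflexivity and symmetry.} For any $(\tilde w,z) \in Z$, condition (Z1) applied with $\tilde w'=\tilde w$ gives $(\tilde w,z)\equiv(\tilde w,z)$. Both (Z1) and (Z2) are plainly symmetric in the pair $(\tilde w,\tilde w')$, so $\equiv$ is symmetric.

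\textbf{Transitivity.} Suppose $(\tilde w_1,z)\equiv(\tilde w_2,z)$ and $(\tilde w_2,z)\equiv(\tilde w_3,z)$, and set $w_j:=f_i(\tilde w_j)$. We may assume $\tilde w_1,\tilde w_2,\tilde w_3$ are pairwise distinct (otherwise the conclusion is immediate from symmetry and the existing equivalences). I will split into four cases according to whether each of the two given equivalences is witnessed by (Z1) or by (Z2), and in each case produce a witness for $(\tilde w_1,z)\equiv(\tilde w_3,z)$ using the inductive conditions (R$_i$), (S$_i$), (T$_i$) and the local combinatorial hypotheses on $\bX$.

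\emph{Case (Z1)+(Z1).} Here $\tilde w_1\sim\tilde w_2\sim\tilde w_3$ in $\tilde G_i$ and $z$ is adjacent in $G$ to each of $w_1,w_2,w_3$. Since $\tilde w_1,\tilde w_3$ both lie in $B_1(\tilde w_2,\tilde G_i)$, condition (T$_i$) identifies the induced subgraph on $\{\tilde w_1,\tilde w_2,\tilde w_3\}$ with the one on $\{w_1,w_2,w_3\}$; then, according to whether $w_1\sim w_3$ or not in $G$, either (Z1) directly witnesses $(\tilde w_1,z)\equiv(\tilde w_3,z)$, or the subgraph induced in $G$ by $\{z,w_1,w_2,w_3\}$ is a $W_4^-$ or a square, in which case Lemma~\ref{K_2,3+W^-_4} together with (S$_i$) yields the required square through $\tilde w_2$ for (Z2).

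\emph{Mixed cases (Z1)+(Z2) and (Z2)+(Z1).} By symmetry, only one needs treatment. Suppose $\tilde w_1\sim\tilde w_2$ in $\tilde G_i$ with $w_1\sim z$, and that $(\tilde w_2,z)\equiv(\tilde w_3,z)$ via (Z2) through some $\tilde u\in\tilde B_{i-1}$ adjacent to $\tilde w_2,\tilde w_3$ with $f_i(\tilde u)w_2 z w_3$ a square of $\bX$. Applying (R$_i$) at $\tilde u$ one transports the configuration $\{z,w_1,w_2,w_3,f_i(\tilde u)\}$ to $\bX$ and analyses the induced subgraph: the forbidden $K_{2,3}, W_4, W_4^-$ substructures of $G$ (condition (iii) respected by $\bX^{(1)}$) together with the house and cube conditions force either a triangle $w_1 w_2 w_3$ (giving (Z1)), or a square $f_i(\tilde u)w_1 z w_3$ (giving (Z2) through $\tilde u$ by (R$_i$) and (S$_i$)), or a house/double-house collapsing via Lemma~\ref{no-double-house} back to one of these two situations.

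\emph{Case (Z2)+(Z2).} This is the main obstacle. Here we have $\tilde u_{12},\tilde u_{23}\in\tilde B_{i-1}$ and two squares $f_i(\tilde u_{12}) w_1 z w_2$ and $f_i(\tilde u_{23}) w_2 z w_3$ in $\bX$ sharing the edge $w_2 z$. I distinguish whether $\tilde u_{12}=\tilde u_{23}$ or not. In the first subcase, the image vertex $u:=f_i(\tilde u_{12})$ is a common neighbor of $w_1,w_2,w_3$ in $\bX$, and the local configuration at $u$ is a \emph{cube-condition} pattern (three squares pairwise sharing an edge through $z$); the cube condition produces a $3$-cube of $\bX$, and (R$_i$) at $\tilde u$ lifts this cube to $\tilde G_i$, exhibiting (Z2) for $(\tilde w_1,z)\equiv(\tilde w_3,z)$ through $\tilde u$. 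In the second subcase $\tilde u_{12}\ne\tilde u_{23}$ the images $u_{12},u_{23}$ together with $w_1,w_2,w_3,z$ produce either a forbidden induced $K_{2,3}, W_4, W_4^-$ (ruled out by (ii)), or a twin-house/double-house configuration; Lemmas~\ref{house+K2,3} and~\ref{no-double-house} then upgrade it to a double prism or prism of $\bX$, from which (R$_i$) at $\tilde u_{12}$ (or $\tilde u_{23}$) delivers the required square in $\tilde G_i$ witnessing $(\tilde w_1,z)\equiv(\tilde w_3,z)$.

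The hard part is the last case, because it is the only one that genuinely needs the cube condition and the classification of the forbidden small subgraphs. Once all four cases are handled, $\equiv$ is an equivalence relation.
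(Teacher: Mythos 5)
Your reflexivity/symmetry check and the top-level split of transitivity according to which of (Z1)/(Z2) witnesses each hypothesis do match the paper's Cases 1--3, but none of the three transitivity cases is actually proved, and the proposed mechanisms would not work. The paper's argument is a reductio with two ingredients you never use. First it reduces to the situation where $\widetilde{w}_1\nsim\widetilde{w}_3$ and \emph{no} vertex of $\widetilde{B}_{i-1}$ is adjacent to both (otherwise (Z1), resp.\ (Z2), is verified directly), and it records the consequences of $(\widetilde{w},z)\in Z$: no neighbor of $\widetilde{w}$ in $\widetilde{B}_{i-1}$ maps to $z$ or to a neighbor of $z$, and no $2$-path from $\widetilde{w}$ through $\widetilde{B}_{i-1}$ ends at a vertex mapping to a neighbor of $z$ (properties (A$_3$)--(A$_5$), proved via (R$_i$) and (S$_i$)). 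Second, it uses property (Q$_i$) --- weak modularity of $\widetilde{G}_i$ with respect to $\widetilde{v}$ --- to produce, via TC($\widetilde{v}$) and QC($\widetilde{v}$), vertices $\widetilde{u},\widetilde{u}'\in\widetilde{S}_{i-1}$ and $\widetilde{x}\in\widetilde{S}_{i-2}$ underneath the configuration; the contradiction then comes from the $\widehat{W}_5$-wheel condition, from Lemma~\ref{W__k} (no induced $W_5^-$, $W_6^-$), from Lemma~\ref{no-double-house}, or from a cogwheel plus the cube condition lifted by (R$_i$)/(S$_i$). Your sketch never invokes TC/QC of $\widetilde{G}_i$ nor the $\widehat{W}_5$-wheel condition, and without them there is no way to manufacture the (Z2) witness (which must be a vertex of $\widetilde{B}_{i-1}$ adjacent to both $\widetilde{w}_1$ and $\widetilde{w}_3$) or to exclude the bad configurations.

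Concretely: in your (Z1)+(Z1) case with $w_1\nsim w_3$, the subgraph of $G$ induced by $\{z,w_1,w_2,w_3\}$ is $K_4$ minus the edge $w_1w_3$ (two triangles sharing $zw_2$), not a square or a $W_4^-$; Lemma~\ref{K_2,3+W^-_4} and (S$_i$) cannot produce the witness, since (S$_i$) requires both ends of the known edge of the square to lie in $\widetilde{B}_{i-1}$ while $\widetilde{w}_1,\widetilde{w}_2,\widetilde{w}_3\in\widetilde{S}_i$, and a ``square through $\widetilde{w}_2$'' is beside the point because (Z2) demands a witness in $\widetilde{B}_{i-1}$. In your (Z2)+(Z2) subcase $\widetilde{u}_{12}=\widetilde{u}_{23}$ there are only two squares, so the cube condition does not apply as stated; in fact $\{u,z,w_1,w_2,w_3\}$ would induce $K_{2,3}$ or $W_4^-$, so this subcase is simply impossible rather than a source of a $3$-cube. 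In the mixed case, the (Z2) square $uw_2zw_3$ is induced, so $w_2\nsim w_3$ and the ``triangle $w_1w_2w_3$'' alternative cannot arise; moreover $w_1\sim w_3$ in $G$ does not by itself give $\widetilde{w}_1\sim\widetilde{w}_3$ in $\widetilde{G}_i$, and (R$_i$) at $\widetilde{u}$ does not ``transport'' $z$, since $z\nsim u$. So the proposal reproduces the case skeleton but omits the actual argument that makes each case close.
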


\begin{proof} For any vertex $\widetilde{w}\in \widetilde{B}_i,$ we will
  denote by $w=f_i(\widetilde{w})$ its image in $X$ under $f_i$. Since the
  binary relation $\equiv$ is reflexive and symmetric, it suffices to
  show that $\equiv$ is transitive. Let $(\widetilde{w},z)\equiv
  (\widetilde{w}',z')$ and $(\widetilde{w}',z')\equiv (\widetilde{w}'',z'')$. We
  will prove that $(\widetilde{w},z)\equiv (\widetilde{w}'',z'').$ By
  definition of $\equiv,$ we conclude that $z=z'=z''$. By definition of
$\equiv,$ $z\in B_1(w,G)\cap B_1(w',G)\cap
B_1(w'',G).$

If $\tw \sim \tw''$ (in $\widetilde{G}_i$), then by definition of $\equiv$,
$(\tw,z)\equiv
(\tw'',z)$ and we are done. If $\tw \nsim \tw''$ and if there exists
$\tu \in \tB_{i-1}$ such that $\tu \sim \tw, \tw''$, then by (R$_i$)
applied to $\tu$, we obtain that $u \sim w, w''$ and $w \nsim
w''$. Since $(\tw,z), (\tw'',z) \in Z$, we have $z\sim w,w''$. Moreover,
if $z\sim u$, then by (R$_i$) applied to $u$, there exists $\tz \in
\tB_i$, such that $\tz \sim \tu, \tw, \tw''$ and $f_i(\tz)=z$.  Thus
$(\tw,z), (\tw',z) \notin Z$, which is a contradiction. Consequently,
if $\tw \nsim \tw''$ and if there exists $\tu \in \tB_{i-1}$ such that
$\tu \sim \tw, \tw''$ and $f_i(\widetilde u)=u$, then $uwzw''$ is an induced
square in $G$,
and by condition (Z2), we are done. Therefore, in the rest of the
proof, we will make the following assumptions and show that they lead to a
contradiction.
\begin{enumerate}
\item[($A_1$)] $\tw \nsim \tw''$;
\item[($A_2$)] there is no $\tu \in \tS_{i-1}$ such that $\tu \sim
\tw, \tw''$.
\end{enumerate}

\begin{claim}
For any couple $(\tw,z) \in Z$ the following properties hold:
\begin{enumerate}
\item[($A_3$)] there is no neighbor $\tz \in
  \tB_{i-1}$ of $\tw$ such that $f_i(\tz)=z$;
\item [($A_4$)] there is no neighbor  $\tu \in
  \tB_{i-1}$ of $\tw$ such that $u \sim z$;
\item [($A_5$)] there are no $\tx, \ty \in \tB_{i-1}$ such that
  $\tx \sim \tw, \ty$ and   $y \sim z$.
\end{enumerate}
\end{claim}

\begin{proof}
If $\tw$ has a neighbor $\tz \in \tB_{i-1}$ such that $f_i(\tz)=z$,
then $(\tw,z) \notin Z$,  a contradiction. This establishes $(A_3)$.

If $\tw$ has a neighbor $\tu \in \tB_{i-1}$ such that $u \sim z$, then
by (R$_i$) applied to $\tu$, there exists $\tz \in \tB_i$ such that
$\tz \sim \tu,\tw$. Thus $(\tw,z) \notin Z$, a contradiction, establishing $(A_4)$.

If there exist $\tx, \ty \in \tB_{i-1}$ such that
$\tx \sim \tw, \ty$ and $y \sim z$, then $yxwz$ is an
induced square in $G$. From (S$_i$) applied to $\ty,\tx$, there
exists $\tz \in \tB_i$ such that $\tz \sim \ty,\tw$ and $f_i(\tz)=z$. Thus
$(\tw,z)
\notin Z$, a contradiction, and therefore $(A_5)$ holds as well.
\end{proof}

We distinguish three cases depending on which of the conditions (Z1)
or (Z2) are satisfied by the pairs $(\widetilde{w},z)\equiv
(\widetilde{w}',z')$ and $(\widetilde{w}',z')\equiv (\widetilde{w}'',z'')$.

\medskip\noindent
{\bf Case 1:}  $\widetilde{w}'$ is adjacent in  $\widetilde{G}_i$ to both
$\widetilde{w}$ and $\widetilde{w}''.$

\medskip
By (Q$_i$), the graph $\widetilde{G}_i$ satisfies the triangle condition
TC($\widetilde{v}$),  thus there exist two vertices
$\widetilde{u},\widetilde{u}'\in \widetilde{S}_{i-1}$ such that $\widetilde{u}$
is
adjacent to $\widetilde{w}, \widetilde{w}'$ and $\widetilde{u}'$ is adjacent to
$\widetilde{w}',\widetilde{w}''$. By ($A_2$), $\tu \nsim \tw''$, $\tu' \nsim
\tw$, $\tu \neq \tu'$.

If $\tu \sim \tu'$, then by (T$_i$) applied to $\tw'$ and by
$(A_3)\&(A_4$), the vertices $u,u',w,w',w'',z$ induce $W_5$ in $G$.
By TC($\tv$), there exists $\tx \in \tS_{i-2}$ such that $\tx \sim \tu, \tu'$.
By
(R$_i$) applied to $\tu$ and $\tu'$, we get $x \notin
\{u,u',w,w',w''\}$ and $x \sim u, u'$. From ($A_4)$\&$(A_5$), we get $x
\neq z$ and $x \nsim z$. Since $G$ satisfies the
$\widehat{W}_5$-wheel condition, there exists a vertex $y$ of $G$ adjacent to
$x,u,u',w,w',w'',z$. By (R$_{i}$) applied to $\tu$, there
exists $\ty \sim \tw,\tu,\tx$ and thus $\ty \in \tB_{i-1}$, contradicting
the property $(A_4)$.

Suppose now that $\tu \nsim \tu'$. Then $i \geq 2$ and by QC($\tv$),
there exists $\tx \in \tS_{i-2}$ such that $\tx \sim \tu, \tu'$. From
($A_4)\&(A_5$), $x \neq z$ and $x \nsim z$. Consequently,
$z,w,w',w'',u,u',x$ induce a $W_6^-$, contradicting Lemma~\ref{W__k}.

\medskip\noindent {\bf Case 2:} $\widetilde{w}$ and $\widetilde{w}'$ are
adjacent in $\widetilde{G}_i$, and there exists $\widetilde{u}'\in
\widetilde{B}_{i-1}$ adjacent to $\widetilde{w}'$ and $\widetilde{w}''$ such
that
$u'w'w''z$ is a square-cell of ${\bX}$.

\medskip By $(A_1)\&(A_2)$, $\tw \nsim \tw''$ and $\tu' \nsim
\tw$.  By the triangle condition TC($\widetilde{v}$) for $\widetilde{G}_i$, there
exists a vertex $\widetilde{u}\in \widetilde{B}_{i-1}$ different from
$\widetilde{u}'$ and adjacent to $\widetilde{w}$ and $\widetilde{w}'$. By
$(A_3)\&(A_4)$,
$u \neq z$ and $u \nsim z$. By $(A_2)$, $\tu \nsim \tw''$.

If $\tu \sim \tu'$, by (T$_i$) applied to $w'$, $z,w,w',u,u',w''$ induce a
$W_5^-$, contradicting Lemma~\ref{W__k}. Thus $\tu \nsim \tu'$. By the
quadrangle condition QC($\widetilde{v}$) for $\widetilde{G}_i$, there exists a
vertex $\widetilde{x}\in \widetilde{S}_{i-2}$ adjacent to $\widetilde{u}$ and
$\widetilde{u}'$. From $(A_4)\&(A_5)$, $x \neq z$ and $x \nsim z$. By
(T$_i$) applied to $\tw'$ and by (R$_i$) applied to $\tu'$, we get that
$z,w,w',w'',u,u',x$ induce a twin-house. By Lemma~\ref{house+K2,3}
there exists $y$ in $G$ such that $y\sim w,w'',u',x$ and $y \nsim
u, z$. By (R$_i$) applied to $u'$, there exists $\ty \in \tB_i$ such
that $\ty \sim \tu',\tw'',\tx$. By (S$_i$) applied to $\tu,\tx$ and to
the square $uxyw$, we get $\ty \sim \tw$. Consequently, $\ty \in \tS_{i-1}$,
$\ty \sim \tw, \tw''$, contradicting ($A_2$).

\medskip\noindent
{\bf Case 3:}  There exist
$\widetilde{u},\widetilde{u}'\in \widetilde{B}_{i-1}$ such that the vertex
$\widetilde{u}$ is adjacent in $\widetilde{G}_i$ to
$\widetilde{w},\widetilde{w}',$ the vertex
$\widetilde{u}'$ is adjacent to $\widetilde{w}',\widetilde{w}'',$ and $uwzw'$
and $u'w'zw''$ are square-cells of ${\bX}$.

\medskip From $(A_1)\&(A_2)$, $\tw \nsim \tw''$, $\tu \neq \tu'$, $\tu
\nsim \tw''$, and $\tu' \nsim \tw$. From $(A_3)$, $u \neq z \neq u'$ and $z
\nsim u,u'$.  If $\tu \sim \tu'$, by (T$_i$) applied to $w'$
and by (R$_i$) applied to $u, u'$, the vertices $z,w,w',w'',u,u'$
induce a double-house, which is impossible from
Lemma~\ref{no-double-house}. Thus $\tu \nsim \tu'$.

By QC($\tv$), there exists $\tx \in \tS_{i-2}$ such that $\tx \sim
\tu, \tu'$. By $(A_4)\&(A_5)$, $x \neq z$ and $x \nsim z$. By (T$_i$)
applied to $w'$ and by (R$_i$) applied to $u,u'$, the vertices
$z,w,w',w'',u,u',x$ induce $CW_3$. Thus, by the 3-cube condition,
there exists a vertex $y$ of $G$ such that $y \sim x, w,w''$ and $y \nsim
z,w',u,u'$. By (R$_i$) applied to $\tx$, there is $\ty \in \tB_i$ such
that $\ty \sim \tx$.  By (S$_i$) applied to $\tu,\tx$ and to the
square $uxyw$, we have $\ty \sim \tw$. By (S$_i$) applied to $\tu',\tx$ and to
the square $u'xyw''$, we get $\ty \sim \tw''$. Consequently, $\ty \in
\tS_{i-1}$, $\ty \sim \tw, \tw''$, contradicting ($A_2$).
\end{proof}

Let $\widetilde{S}_{i+1}$ denote the equivalence classes of $\equiv$, i.e.,
$\widetilde{S}_{i+1}=Z/_{\equiv}$. For a couple $(\widetilde{w},z)\in Z$,
we will denote by $[\widetilde{w},z]$ the equivalence class of $\equiv$
containing  $(\widetilde{w},z).$ Set $\widetilde{B}_{i+1}:=\widetilde{B}_i\cup
\widetilde{S}_{i+1}.$
Let $\widetilde{G}_{i+1}$ be the graph having  $\widetilde{B}_{i+1}$ as the
vertex set in which two vertices $\widetilde{a},\widetilde{b}$ are adjacent if
and
only if one of the following conditions holds:
\begin{itemize}
\item[(1)] $\widetilde{a},\widetilde{b}\in \widetilde{B}_i$ and
$\widetilde{a}\widetilde{b}$ is an edge of $\widetilde{G}_i$,
\item[(2)] $\widetilde{a}\in \widetilde{B}_i$,  $\widetilde{b}\in
\widetilde{S}_{i+1}$ and $\widetilde{b}=[\widetilde{a},z]$,
\item[(3)] $\widetilde{a},\widetilde{b}\in \widetilde{S}_{i+1},$
$\widetilde{a}=[\widetilde{w},z]$,
$\widetilde{b}=[\widetilde{w},z']$ for a vertex $\widetilde{w}\in \tB_i,$ and
$z\sim z'$ in the graph $G$.
\end{itemize}

Finally, we define the map $f_{i+1}: \widetilde{B}_{i+1}\rightarrow V({\bX})$ in
the
following way: if $\widetilde{a}\in \widetilde{B}_i$, then
$f_{i+1}(\widetilde{a})=f_i(\widetilde{a}),$  otherwise, if $\widetilde{a}\in
\widetilde{S}_{i+1}$ and
$\widetilde{a}=[\widetilde{w},z],$ then $f_{i+1}(\widetilde{a})=z.$ Notice that
$f_{i+1}$ is well-defined because all couples representing $\widetilde{a}$ have one and the same vertex $z$ in the
second argument. In the sequel, all vertices of $\widetilde{B}_{i+1}$ will
be denoted with a tilde and their images in $G$ under $f_{i+1}$ will be denoted
without tilde, e.g. if $\widetilde{w}\in \widetilde{B}_{i+1},$
then $w=f_{i+1}(\widetilde{w})$.

\subsubsection{Properties of $\tG_{i+1}$ and $f_{i+1}$.}
\label{s:propG}
In this subsection we check our inductive assumptions, verifying the
properties (P$_{i+1}$) through (T$_{i+1}$) for $\tG_{i+1}$ and $f_{i+1}$
defined above. In particular it allows us to define the corresponding
complex $\tX_{i+1}$.
\medskip

\begin{lemma} \label{Pi+1}  $\tG_{i+1}$ satisfies the property $(P_{i+1})$,
i.e.,
$B_j(v,\tG_{i+1})=\widetilde{B}_j$ for any $j\le i+1.$
\end{lemma}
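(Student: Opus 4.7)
The plan is to establish both inclusions $\tB_j\subseteq B_j(\tv,\tG_{i+1})$ and $B_j(\tv,\tG_{i+1})\subseteq \tB_j$ for each $j\le i+1$, the core observation being that the three edge-types in $\tG_{i+1}$ respect the level decomposition $\tB_0\subsetneq \tB_1\subsetneq\cdots\subsetneq \tB_{i+1}$, so that no type (2) or (3) edge can act as a shortcut.

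For the forward inclusion, since every type (1) edge already lies in $\tG_i$ and $\tG_i$ is a subgraph of $\tG_{i+1}$, property $(P_i)$ yields $\tB_j = B_j(\tv,\tG_i)\subseteq B_j(\tv,\tG_{i+1})$ for all $j\le i$. For $j=i+1$, every equivalence class $\tw=[\tu,z]\in \tS_{i+1}$ has, by the very definition of $Z$, a representative with $\tu\in\tS_i$, so the type (2) edge $\tu\sim\tw$ together with $d_{\tG_{i+1}}(\tv,\tu)\le d_{\tG_i}(\tv,\tu)=i$ gives $d_{\tG_{i+1}}(\tv,\tw)\le i+1$.

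The reverse inclusion is proved by induction on $j$. The case $j=0$ is trivial, and the case $j=1$ uses the fact that for $i\ge 1$ the basepoint $\tv$ does not lie in $\tS_i$, whence no type (2) edge and no type (3) edge is incident to $\tv$; thus the $\tG_{i+1}$-neighbors of $\tv$ coincide with its $\tG_1$-neighbors, which form $\tB_1$ by $(P_i)$. For the induction step, fix a shortest $\tG_{i+1}$-path $\tv=\tu_0,\tu_1,\dots,\tu_j=\tu$ with $j\le i+1$, and apply the induction hypothesis to obtain $\tu_{j-1}\in\tB_{j-1}$. Then distinguish the type of the final edge $\tu_{j-1}\tu_j$:
\begin{itemize}
\item If it is of type (1), both endpoints lie in $\tB_i$ and the edge belongs to $\tG_i$; combined with $\tu_{j-1}\in\tB_{j-1}=B_{j-1}(\tv,\tG_i)$ via $(P_i)$, this gives $\tu_j\in\tB_j$.
\item If it is of type (2), then $\tu_j=[\tu_{j-1},z]\in\tS_{i+1}$, which by definition of $Z$ forces $\tu_{j-1}\in\tS_i$. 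Since $\tS_i\cap\tB_{i-1}=\emptyset$, the containment $\tu_{j-1}\in\tB_{j-1}\cap\tS_i$ requires $j-1\ge i$; combined with $j\le i+1$ one gets $j=i+1$, so $\tu_j\in\tS_{i+1}\subseteq\tB_{i+1}=\tB_j$.
\item If it is of type (3), then $\tu_{j-1}\in\tS_{i+1}$, but the sets $\tS_{i+1}$ and $\tB_j$ are disjoint whenever $j\le i$ (since $\tS_{i+1}$ consists of equivalence classes created only at stage $i+1$), forcing $j-1\ge i+1$ and contradicting $j\le i+1$. Hence this case cannot occur.
\end{itemize}

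The step I expect to be the most delicate is the type (2) sub-case, because naively one might fear that many vertices of $\tB_i$ acquire a new ``level-$(i+1)$'' neighbor and thereby shortcut the graph; the argument above rules this out precisely by exploiting the defining requirement $\tw\in\tS_i$ in the construction of $Z$, which pins each type (2) edge to the outer sphere $\tS_i$. The remaining book-keeping is routine, and the disjointness identities $\tS_i\cap\tB_{i-1}=\emptyset$ and $\tS_{i+1}\cap\tB_i=\emptyset$ that are invoked throughout follow immediately from the inductive definition of the sets $\tB_j$.
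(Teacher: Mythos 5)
Your proposal is correct and follows essentially the same route as the paper: the key point in both is that, by the definition of $Z$ and of the three edge types, every vertex of $\tS_{i+1}$ is adjacent only to vertices of $\tS_i\cup\tS_{i+1}$, so the new vertices create no shortcuts and lie at distance exactly $i+1$ from $\tv$. Your version merely spells this out as a double inclusion with an induction on $j$ (with only cosmetic imprecisions, e.g.\ the type (1) subcase $j=i+1$, where $\tu_j\in\tB_i\subseteq\tB_{i+1}$ directly rather than via $(P_i)$), whereas the paper states the same argument more tersely.
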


\begin{proof} By definition of edges of $\widetilde{G}_{i+1},$ any vertex
$\widetilde{b}$ of $\widetilde{S}_{i+1}$ is adjacent
to at least one vertex of $\tB_i$ and all such neighbors of $\widetilde{b}$ are
vertices of the form $\widetilde{w}\in \widetilde{B}_i$
such that $\widetilde{b}=[\widetilde{w},z]$ for a couple $(\widetilde{w},z)$ of
$Z.$ By definition of $Z,$ $\widetilde{w}\in \tS_i,$
whence any vertex of $\tS_{i+1}$ is adjacent only to vertices of $\tS_i$ and
$\tS_{i+1}.$ Therefore, the distance between
the basepoint $\widetilde{v}$ and any vertex $\widetilde{a}\in \tB_i$ is the
same in the graphs $\tG_i$ and $\tG_{i+1}.$ On the
other hand, the distance in  $\tG_{i+1}$ between $\widetilde{v}$ and any vertex
$\widetilde{b}$ of $\tS_{i+1}$ is $i+1.$ This
shows that indeed $B_j(v,\tG_{i+1})=\widetilde{B}_j$ for any $j\le i+1.$
\end{proof}

\begin{lemma} \label{Qi+1}  $\widetilde{G}_{i+1}$ satisfies the property
$(Q_{i+1}),$ i.e., the graph $\widetilde{G}_{i+1}$
is weakly modular with respect to the basepoint $\widetilde v$.
\end{lemma}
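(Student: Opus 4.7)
The plan is to use the inductive hypothesis $(Q_i)$ together with Lemma \ref{Pi+1} to reduce the verification to configurations that involve at least one vertex of the newly-added sphere $\tS_{i+1}$. First I would observe that, by the definition of the edge set of $\tG_{i+1}$, every edge incident to a vertex of $\tS_{i+1}$ joins it either to a vertex of $\tS_i$ (via rule (2)) or to another vertex of $\tS_{i+1}$ (via rule (3)). Combined with Lemma \ref{Pi+1}, this means no edge of $\tG_{i+1}$ creates a shortcut between two vertices of $\tB_i$, so $\tG_i$ is isometrically embedded in $\tG_{i+1}$ and every instance of TC$(\tv)$ or QC$(\tv)$ whose data lies entirely in $\tB_i$ is already resolved by $(Q_i)$.

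For the triangle condition, the only new case is when two adjacent vertices $\ta,\tb$ both lie in $\tS_{i+1}$. An edge between them can be produced only by rule (3), so $\ta=[\tw,z]$ and $\tb=[\tw,z']$ for the same vertex $\tw\in\tS_i$ with $z\sim z'$ in $G$. Then rule (2) makes $\tw$ adjacent to both $\ta$ and $\tb$, and $d(\tv,\tw)=i=d(\tv,\ta)-1$, which is exactly what TC$(\tv)$ demands.

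For the quadrangle condition, the new case is that of $\ta,\tb\in\tS_i$ with $d(\ta,\tb)=2$ having a common neighbor $\tc\in\tS_{i+1}$. Write $\tc=[\tw,z]$. Since $\tc\sim\ta$ and $\tc\sim\tb$ can arise only from rule (2), we must have $\tc=[\ta,z]=[\tb,z]$, so $(\ta,z)\equiv(\tb,z)$. Condition (Z1) in the definition of $\equiv$ would force $\ta=\tb$ or $\ta\sim\tb$, both contradicting $d(\ta,\tb)=2$; therefore (Z2) must hold, producing $\tu\in\tB_{i-1}$ adjacent in $\tG_i$ to both $\ta$ and $\tb$. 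Since $\tu\sim\ta\in\tS_i$ forces $d(\tv,\tu)\ge i-1$, while $\tu\in\tB_{i-1}$ gives the reverse inequality, we conclude $\tu\in\tS_{i-1}$, which is precisely the common neighbor required by QC$(\tv)$.

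The main potential obstacle is making sure the case analysis is exhaustive: one must rule out that $\ta,\tb$ in QC$(\tv)$ could both lie in $\tS_{i+1}$ (impossible, since this would demand a vertex at distance $i+2$ from $\tv$, which does not yet exist in $\tG_{i+1}$) and that the common vertex from (Z2) could land outside $\tS_{i-1}$ (excluded by the distance estimate above). The whole argument is essentially a bookkeeping verification once one sees that the two clauses (Z1) and (Z2) of the equivalence $\equiv$ were designed exactly to encode, respectively, triangle-completions already present in $\tG_i$ and quadrangle-completions supplied by a lower-sphere vertex; in particular the hard work of checking that $\equiv$ is an equivalence relation, done in Lemma \ref{equiv}, is what makes this verification straightforward here.
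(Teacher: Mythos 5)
Your proof is correct and follows essentially the same route as the paper: reduce to configurations meeting $\widetilde{S}_{i+1}$ using (P$_{i+1}$) and (Q$_i$), verify TC($\widetilde{v}$) via the common vertex $\widetilde{w}\in \widetilde{S}_i$ supplied by edge rule (3), and verify QC($\widetilde{v}$) by noting that nonadjacency rules out (Z1), so clause (Z2) of $\equiv$ supplies the required vertex of $\widetilde{S}_{i-1}$. The only cosmetic difference is your passing claim that $\widetilde{G}_i$ is isometrically embedded in $\widetilde{G}_{i+1}$, which is neither proved at this stage nor needed, since the argument only uses the preservation of distances to $\widetilde{v}$ guaranteed by Lemma \ref{Pi+1}.
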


\begin{proof} First we show that  $\widetilde{G}_{i+1}$ satisfies the triangle
condition TC($\widetilde{v}$).
Pick two adjacent vertices $\widetilde{x},\widetilde{y}$ having in
$\widetilde{G}_{i+1}$ the same distance to $\widetilde{v}.$
Since by Lemma \ref{Pi+1}, $\widetilde{G}_{i+1}$ satisfies the property
(P$_{i+1}$) and the graph $\tG_i$ is weakly modular
with respect to $\widetilde v$, we can suppose that
$\widetilde{x},\widetilde{y}\in \tS_{i+1}.$ From the definition of the edges
of $\tG_{i+1}$, there exist two couples $(\widetilde{w},z),(\widetilde{w},z')\in
Z$ such that $\widetilde{w}\in \tB_i,$ $z$ is
adjacent to $z'$ in $G,$ and
$\widetilde{x}=[\widetilde{w},z],\widetilde{y}=[\widetilde{w},z'].$ Since
$\widetilde{w}$ is
adjacent in $\tG_{i+1}$ to both $\widetilde{x}$ and $\widetilde{y},$ the
triangle condition TC($\widetilde{v}$) is established.

Now we show that $\widetilde{G}_{i+1}$ satisfies the quadrangle condition
QC($\widetilde{v}$). Since  the properties
(P$_{i+1}$) and (Q$_i$) hold, it suffices to consider a vertex $\widetilde{x}\in
\tS_{i+1}$ having two nonadjacent
neighbors $\widetilde{w},\widetilde{w}'$ in $\widetilde{S}_i.$ By definition of
$\tG_{i+1},$ there exists
a vertex $z$ of ${\bX}$ and couples $(\widetilde{w},z),(\widetilde{w}',z)\in Z$
such that $\widetilde{x}=[\widetilde{w},z]$
and $\widetilde{x}=[\widetilde{w}',z].$ Hence $(\widetilde{w},z)\equiv
(\widetilde{w}',z).$ Since $\widetilde{w}$ and $\widetilde{w}'$
are not adjacent, by condition (Z2) in the definition of $\equiv$ there exists
$\widetilde{u}\in \tB_{i-1}$ adjacent
to $\widetilde{w}$ and $\widetilde{w}'$, whence
$\widetilde{x},\widetilde{w},\widetilde{w}'$ satisfy QC($\widetilde{v}$).
\end{proof}

We first prove that the mapping $f_{i+1}$ is a graph homomorphism
(preserving edges) from
$\tG_{i+1}$ to $G$. In particular, this implies that two adjacent vertices of
$\tG_{i+1}$ are mapped in $G$ to
different vertices.

\begin{lemma}\label{lem-homomorphism}
$f_{i+1}$ is a graph homomorphism from $\tG_{i+1}$ to $G$, i.e., for any edge
$\ta\tb$ of $\tG_{i+1}$, $ab$ is an edge of $G$.
\end{lemma}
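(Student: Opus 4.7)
The plan is to prove this by case analysis, according to the three cases in the definition of edges of $\tG_{i+1}$. Each case reduces to a direct consequence of either the inductive hypotheses carried from stage $i$ or to the very definition of $\equiv$ and of $Z$.

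First I would handle edges of type (1), in which both endpoints lie in $\tB_i$ and $\ta\tb$ is an edge of $\tG_i$. Since $f_{i+1}$ restricted to $\tB_i$ coincides with $f_i$, it is enough to show $f_i$ preserves each edge of $\tG_i$. If at least one of $\ta,\tb$ lies in $\tB_{i-1}$, this follows from property (R$_i$), which says $f_i$ is a graph isomorphism on the $1$-ball of any vertex of $\tB_{i-1}$; if both endpoints lie in $\tS_i$, the same conclusion follows from (T$_i$) applied to either of them. So in case (1) the edge $ab$ appears in $G$.

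Next I would handle edges of type (2), where $\ta\in\tB_i$ and $\tb=[\ta,z]\in\tS_{i+1}$. By definition of $Z$, we have $\ta\in\tS_i$ and $z\in B_1(f_i(\ta),G)\setminus f_i(B_1(\ta,\tG_i))$, so $z$ is either equal to $a=f_i(\ta)$ or adjacent to $a$ in $G$. Since $\ta\in B_1(\ta,\tG_i)$, we have $a\in f_i(B_1(\ta,\tG_i))$, so the definition of $Z$ forces $z\neq a$. Hence $z\sim a$ in $G$, which is exactly what we want since $f_{i+1}(\ta)=a$ and $f_{i+1}(\tb)=z$. I expect this to be the ``main'' case in terms of using the definitions, though it is still short.

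Finally I would address edges of type (3): $\ta=[\tw,z]$ and $\tb=[\tw,z']$ for some common $\tw\in\tB_i$, with $z\sim z'$ in $G$. Here the conclusion is immediate from the definition of $f_{i+1}$, which sends $\ta$ to $z$ and $\tb$ to $z'$, so that $ab=zz'$ is an edge of $G$ by hypothesis. No properties of $\tG_i$ or of the equivalence relation are needed beyond the fact that $f_{i+1}$ is well-defined (which was already verified when $\equiv$ was shown to be an equivalence relation in Lemma~\ref{equiv}). The union of the three cases yields the claim, and no genuine obstacle arises: the lemma essentially records that the definition of $\tG_{i+1}$ was arranged precisely so that $f_{i+1}$ is a homomorphism.
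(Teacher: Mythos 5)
Your proposal is correct and follows essentially the same route as the paper: a case analysis over the three types of edges of $\tG_{i+1}$, using (R$_i$)/(T$_i$) for edges inside $\tB_i$, the definition of $Z$ (which forces $z\neq f_i(\ta)$, hence $z\sim a$) for edges of type (2), and the built-in requirement $z\sim z'$ for edges of type (3). Your treatment of case (2) is in fact slightly more explicit than the paper's, but the argument is the same.
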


\begin{proof}
Consider an edge $\ta\tb$ of $\tG_{i+1}$. If $\ta,\tb \in \tB_i$, the
lemma holds by (R$_i$) or (T$_i$)  applied to $\ta$. Suppose that $\ta \in
\tS_{i+1}$. If $\tb \in \tB_i$, then $\ta = [\tb,a]$, and $ab$ is an
edge of $G$. If $\tb \in \tB_{i+1}$, then the fact that $\ta$ and $\tb$ are
adjacent  implies that there
exists a vertex $\tw \in \tB_i$ such that $\ta = [\tw,a], \tb= [\tw,b]$ and
such that $a \sim b$ in $G$.
\end{proof}

We now prove that $f_{i+1}$ is locally surjective at any vertex in
$\tB_i$.

\begin{lemma}\label{lem-locally-surjective}
If $\ta \in \tB_i$ and if $b \sim a$ in $G,$ then there exists
a vertex $\tb$ of $\tG_{i+1}$  adjacent to $\ta$ such that $f_{i+1}(\tb) = b$.
\end{lemma}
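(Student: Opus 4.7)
The plan is a short case analysis based on the location of $\ta$ in $\tB_i$, invoking the inductive properties (R$_i$) and (T$_i$) together with the explicit description of the edges of $\tG_{i+1}$ and of the map $f_{i+1}$.

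First I would handle the case $\ta \in \tB_{i-1}$. By hypothesis (R$_i$), $f_i$ restricts to an isomorphism between the subgraph induced by $B_1(\ta,\tG_i)$ and the subgraph induced by $B_1(a,G)$. Since $b \in B_1(a,G)$, there is a (unique) preimage $\tb \in B_1(\ta,\tG_i) \subseteq \tB_i \subseteq \tB_{i+1}$ with $f_i(\tb)=b$ and $\tb \sim \ta$ in $\tG_i$. By edge rule (1) in the definition of $\tG_{i+1}$, the edge $\ta\tb$ survives in $\tG_{i+1}$, and $f_{i+1}(\tb)=f_i(\tb)=b$.

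Next I would treat the case $\ta \in \tS_i$. Here (T$_i$) only gives an isomorphism between $B_1(\ta,\tG_i)$ and $f_i(B_1(\ta,\tG_i)) \subseteq B_1(a,G)$, so I split into two subcases according to whether $b$ lies in $f_i(B_1(\ta,\tG_i))$. If yes, then exactly as above (T$_i$) supplies a preimage $\tb \in B_1(\ta,\tG_i)$ of $b$ that is adjacent to $\ta$ in $\tG_i$ hence in $\tG_{i+1}$ by edge rule (1). If no, then by definition the pair $(\ta,b)$ lies in the set $Z$ used to construct $\tS_{i+1}$; set $\tb:=[\ta,b]\in \tS_{i+1}\subseteq \tB_{i+1}$. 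By edge rule (2) of $\tG_{i+1}$, $\tb$ is adjacent to $\ta$, and by the definition of $f_{i+1}$ on $\tS_{i+1}$ we have $f_{i+1}(\tb)=b$.

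I do not anticipate any real obstacle: every step is an immediate unwinding of the inductive data $(P_i)$--$(T_i)$ together with the edge-rules (1)--(3) and the definition of $f_{i+1}$. The only subtle point (that $\equiv$ identifies the correct pairs so that $\tb$ is unambiguous) has already been absorbed into the construction via Lemma~\ref{equiv}, so here no further combinatorial work is needed.
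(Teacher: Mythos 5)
Your proposal is correct and follows essentially the same argument as the paper: for $\ta\in\tB_{i-1}$ invoke (R$_i$); for $\ta\in\tS_i$ either a neighbor in $\tB_i$ already maps to $b$, or $(\ta,b)\in Z$ and the new vertex $[\ta,b]\in\tS_{i+1}$ is adjacent to $\ta$ by edge rule (2) and is mapped to $b$ by $f_{i+1}$. Your write-up merely makes explicit the appeal to (T$_i$) and to edge rule (1), which the paper leaves implicit.
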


\begin{proof}
If $\ta \in \tB_{i-1}$, the lemma holds by (R$_i$). Suppose that $\ta
\in \tS_i$ and consider $b \sim a$ in $G$. If $\ta$ has a
neighbor $\tb \in \tB_i$ mapped to $b$ by $f_i$, we are
done. Otherwise $(\ta,b) \in Z$, $[\ta,b]\sim \ta $ in
$\tG_{i+1}$ and $[\ta,b]$ is mapped to $b$ by $f_{i+1}$.
\end{proof}

Before proving the local injectivity of $f_{i+1}$, we formulate a technical
lemma.

\begin{lemma}\label{lem-prop-Z}
Let $(\tw,a),(\tw',a) \in Z$ be such that $(\tw,a)\equiv (\tw',a)$.  If
$(\tw,b) \in Z$ and
$b \sim w'$ in $G$, then $\tw \sim \tw'$, $(\tw',b) \in Z$ and
$(\tw,b) \equiv (\tw',b)$.
\end{lemma}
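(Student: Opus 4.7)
The plan is to reduce the statement to a case analysis based on how $(\widetilde{w},a)\equiv(\widetilde{w}',a)$ is realized. If $\widetilde{w}=\widetilde{w}'$ the conclusion is immediate, so assume $\widetilde{w}\ne\widetilde{w}'$. The hypothesis then places us either in clause (Z1), which already gives $\widetilde{w}\sim\widetilde{w}'$, or in clause (Z2), which a priori allows $\widetilde{w}\nsim\widetilde{w}'$. The main task is to eliminate the latter possibility; once this is done, the remaining assertions $(\widetilde{w}',b)\in Z$ and $(\widetilde{w},b)\equiv(\widetilde{w}',b)$ will follow relatively painlessly from property (T$_i$) and from clause (Z1), respectively.

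To rule out (Z2) with $\widetilde{w}\nsim\widetilde{w}'$, I would take the hypothetical $\widetilde{u}\in\widetilde{B}_{i-1}$ adjacent to both $\widetilde{w},\widetilde{w}'$ together with the induced square-cell $uwaw'$ of $\bX$, and combine this with $b\sim w$ (extracted from $(\widetilde{w},b)\in Z$) and $b\sim w'$ (the hypothesis). The first move is to show $b\nsim u$: otherwise property (R$_i$) applied to $\widetilde{u}$ would lift $b$ to a neighbor $\widetilde{b}\in B_1(\widetilde{u},\widetilde{G}_i)$, and since $b\sim w$ the local isomorphism also yields $\widetilde{b}\sim\widetilde{w}$, forcing $b\in f_i(B_1(\widetilde{w},\widetilde{G}_i))$ and contradicting $(\widetilde{w},b)\in Z$. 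Then I would split on whether $a\sim b$: if $a\sim b$, the five vertices $\{u,w,a,w',b\}$ induce an almost-wheel $W_4^-$ (the induced 4-cycle $uwaw'$ with $b$ adjacent to $w,a,w'$ but not $u$); if $a\nsim b$, the same vertex set induces $K_{2,3}$ with parts $\{w,w'\}$ and $\{u,a,b\}$. Either outcome contradicts condition (ii) of Theorem~\ref{theorem0}, so $\widetilde{w}\sim\widetilde{w}'$.

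Once $\widetilde{w}\sim\widetilde{w}'$ is established, the claim $(\widetilde{w}',b)\in Z$ will follow by assuming a preimage $\widetilde{c}\in B_1(\widetilde{w}',\widetilde{G}_i)$ with $f_i(\widetilde{c})=b$ and invoking (T$_i$) at $\widetilde{w}'$: since $\widetilde{w},\widetilde{c}\in B_1(\widetilde{w}',\widetilde{G}_i)$ and $w\sim b$ in $G$, the local isomorphism forces $\widetilde{c}\sim\widetilde{w}$, again contradicting $(\widetilde{w},b)\in Z$. Finally, $\widetilde{w}\sim\widetilde{w}'$ together with $b\sim w$ and $b\sim w'$ verifies clause (Z1) of the definition of $\equiv$ directly, delivering $(\widetilde{w},b)\equiv(\widetilde{w}',b)$.

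The principal difficulty is the forbidden-subgraph analysis in the second paragraph: whichever of $a\sim b$ or $a\nsim b$ holds, exactly one of the induced subgraphs $W_4^-$ or $K_{2,3}$ must appear among $\{u,w,a,w',b\}$, and it is only because condition (ii) excludes both simultaneously that the dichotomy closes. The surrounding applications of (R$_i$) and (T$_i$) are routine bookkeeping, but this combinatorial step is what truly forces $\widetilde{w}\sim\widetilde{w}'$ and hence the lemma.
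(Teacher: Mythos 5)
Your proof is correct and follows essentially the same route as the paper's: the same two claims ($\tw\sim\tw'$ forced by a forbidden induced subgraph on $\{u,w,a,w',b\}$ arising from the (Z2)-square, then $(\tw',b)\in Z$ by lifting $b$ near $\tw'$ and contradicting $(\tw,b)\in Z$ via the local isomorphism, then (Z1)). The only cosmetic deviations are that you first exclude $b\sim u$ using (R$_i$) at $\tu$, so your dichotomy needs only $K_{2,3}$ and $W_4^-$ where the paper also allows a $W_4$ case, and that you verify $(\tw',b)\in Z$ by applying (T$_i$) at $\tw'$ rather than (R$_i$) at the lifted neighbor $\tb$.
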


\begin{proof}
First suppose that $\tw \nsim \tw'$. Since $(\tw,a)\equiv (\tw',a)$,
there exists $\tu \in \tS_{i-1}$ such that $\tu \sim \tw, \tw'$ and
$wuw'a$ is an induced square in $G$. In $G$, $b \sim
w, w'$, thus $b,w,u,a,w'$ induce $K_{2,3}$ if $b \nsim
a,u$, $W_4$ if $b\sim a,u$, or $W_4^-$ otherwise. In any case,
we get a contradiction.

Thus $\tw \sim \tw'$. If $(\tw',b) \notin Z$, then there
exists $\tb \in \tB_i$ such that $\tb \sim \tw'$ and $f_i(\tb)=b$. In $G$,
$wbw'$
is a triangle, thus $\tb \sim \tw$ by condition (R$_i$) applied to $\tb$. This
implies that $(\tw,b) \notin Z$. Consequently, $(\tw,b), (\tw',b) \in
Z$ and $(\tw,b) \equiv (\tw',b)$ since $\tw \sim \tw'$.
\end{proof}

We can now prove that $f_{i+1}$ is locally injective.

\begin{lemma}\label{lem-locally-injective}
If $\ta \in \tB_{i+1}$ and  $\tb, \tc$ are distinct neighbors of
$\ta$ in $\tG_{i+1}$, then $b \neq c$.
\end{lemma}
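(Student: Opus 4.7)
I will argue by case analysis on the level at which the central vertex $\ta$ lies (i.e., whether $\ta$ is in $\tB_{i-1}$, in $\tS_i$, or in $\tS_{i+1}$), and within each case, on which parts of the decomposition $\tB_{i+1} = \tB_{i-1} \cup \tS_i \cup \tS_{i+1}$ the neighbors $\tb, \tc$ lie. Throughout, a key observation is that, by construction, vertices of $\tS_{i+1}$ are only adjacent in $\tG_{i+1}$ to vertices of $\tS_i$ and of $\tS_{i+1}$.

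If $\ta \in \tB_{i-1}$, both neighbors $\tb, \tc$ necessarily lie in $\tB_i$ (no new neighbors are created in $\tS_{i+1}$), so the result follows from the injectivity part of $(R_i)$ applied to $\ta$. If $\ta \in \tS_i$, I split according to whether $\tb, \tc$ are in $\tB_i$ or $\tS_{i+1}$: both in $\tB_i$ is handled by $(T_i)$; if $\tb \in \tB_i$ and $\tc = [\ta,c] \in \tS_{i+1}$, then by definition of $Z$ we have $c \notin f_i(B_1(\ta,\tG_i))$, so $c \neq b$; finally, if both $\tb = [\ta,b]$ and $\tc = [\ta,c]$ lie in $\tS_{i+1}$ and $b=c$, then $\tb$ and $\tc$ are literally the same equivalence class, contradicting distinctness.

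The main work is the case $\ta \in \tS_{i+1}$, say $\ta = [\tw,a]$ with $\tw \in \tS_i$. Suppose first that both $\tb, \tc$ lie in $\tB_i$, so $\tb = \tx_1$, $\tc = \tx_2$ with $(\tx_1,a) \equiv (\tx_2,a) \equiv (\tw,a)$. If $\tx_1 \sim \tx_2$ in $\tG_i$, applying Lemma~\ref{lem-homomorphism} (really the inductive hypothesis $(T_i)$) gives $x_1 \neq x_2$; if instead they are non-adjacent and related via $(Z2)$ through a common neighbor $\tu \in \tB_{i-1}$, then $(R_i)$ applied to $\tu$ gives $x_1 \neq x_2$. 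Next, if $\tb = \tx_1 \in \tB_i$ and $\tc = [\tx_2,c] \in \tS_{i+1}$ with $(\tx_1,a) \equiv (\tx_2,a)$, assume towards contradiction that $b = x_1 = c$; then $(\tx_2, x_1) \in Z$ forces $x_1 \sim x_2$ in $G$ but, in whichever of $(Z1)$ or $(Z2)$ certifies $(\tx_1,a) \equiv (\tx_2,a)$, either $(T_i)$ or the induced-square structure of a cell forces $x_1 \notin B_1(x_2,G) \setminus f_i(B_1(\tx_2,\tG_i))$, a contradiction. Finally, when both $\tb = [\tx_1,b]$ and $\tc = [\tx_2,c]$ lie in $\tS_{i+1}$, and we assume $b=c$, then $(\tx_1,a)\equiv(\tx_2,a)$, $(\tx_1,b)\in Z$ and $b \sim x_2$ (since $(\tx_2,b) \in Z$), so Lemma~\ref{lem-prop-Z} yields $(\tx_1,b)\equiv(\tx_2,b)$, i.e., $\tb = \tc$, contradicting distinctness.

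The main obstacle is the last subcase of the third case, where both neighbors sit in $\tS_{i+1}$ and one has to compare representatives $[\tx_1,b]$ and $[\tx_2,b]$ with different first coordinates; the just-established Lemma~\ref{lem-prop-Z} is tailored to supply exactly this step, so the argument reduces essentially to the bookkeeping of propagating the equivalence relation $\equiv$ under adjacency in $G$.
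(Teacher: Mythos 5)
Your proof is correct and follows essentially the same route as the paper: the same case analysis on the positions of $\ta,\tb,\tc$ relative to $\tB_i$ and $\tS_{i+1}$, using $(R_i)$, $(T_i)$, the definition of $Z$ and of $\equiv$ (distinguishing (Z1)/(Z2)), and Lemma~\ref{lem-prop-Z} for the crucial subcase where both neighbors lie in $\tS_{i+1}$. The only differences are cosmetic (organizing by whether $\ta\in\tB_{i-1}$, $\tS_i$ or $\tS_{i+1}$, and handling the adjacent-neighbors situation inside the subcases rather than disposing of it globally via Lemma~\ref{lem-homomorphism}).
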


\begin{proof}
First note that if $\tb \sim \tc$, the assertion holds by
Lemma~\ref{lem-homomorphism}; in the following we assume that $\tb
\nsim \tc$. If $\ta, \tb, \tc \in \tB_i$, the lemma holds by (R$_i$)
or (T$_i$) applied to $\ta$.
Suppose first that $\ta \in \tB_i$. If $\tb, \tc \in \tS_{i+1}$, then
$\tb = [\ta,b]$ and $\tc = [\ta,c],$ and thus $b\neq c$. If $\tb \in
\tB_i$ and $\tc = [\ta,c] \in \tS_{i+1}$, then $(\ta,b) \notin Z$, and
thus $c \neq b$. Thus, let $\ta\in \tS_{i+1}.$

If $\tb, \tc \in \tB_i$ and $\ta \in \tS_{i+1}$, then $\ta =
[\tb,a] = [\tc,a]$. Since $(\tb,a) \equiv (\tc,a)$ and since $\tb
\nsim \tc$, there exists $\tu \in \tB_{i-1}$ such that $\tu \sim \tb,
\tc$ and $abuc$ is an induced square of $G$. This
implies that $b \neq c$.

If $\ta, \tb, \tc \in \tS_{i+1}$, then there exist $\tw, \tw' \in \tB_i$
such that $\tb = [\tw,b]$, $\tc =[\tw',c],$ and $\ta= [\tw,a] =
[\tw',a]$. If $b = c$, then  $[\tw,b] =
[\tw',b] = [\tw',c]$ by Lemma~\ref{lem-prop-Z}, and thus $\tb = \tc$,
which is impossible.

If $\ta, \tb \in \tS_{i+1}$ and $\tc \in \tS_i$, then there exists
$\tw \in \tS_i$ such that $\tb = [\tw,b]$ and $\ta = [\tw,a] =
[\tc,a]$. If $\tw \sim \tc$, then $(\tw,c) \notin Z$ and thus,
$(\tw,c) \neq (\tw,b)$, i.e., $b \neq c$. If $\tw \nsim \tc$, since
$[\tw,a] = [\tc,a]$, there exists $\tu \in \tS_{i-1}$ such that
$\tu\sim \tw,\tc$ and such that $acxu$ is an induced square of
$G$. Since $\tw$ and $\tc$ are not adjacent, by (R$_i$) applied
to $\tu$, $w$ and
$c$ are not adjacent as well. Since $w \sim b$, this implies that $b \neq c$.
\end{proof}

\begin{lemma}\label{lem-triangles}
If $\ta \sim \tb, \tc$ in $\tG_{i+1}$, then $\tb \sim \tc$ if and only
if $b \sim c$.
\end{lemma}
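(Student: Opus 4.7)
The forward implication $\tb\sim\tc \Rightarrow b\sim c$ follows immediately from Lemma \ref{lem-homomorphism}, so the work is in the converse. Assuming $b\sim c$ in $G$, my plan is to proceed by case analysis on the location of $\ta$ in the filtration $\tB_{i-1}\subset\tB_i\subset\tB_{i+1}$.

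When $\ta\in\tB_{i-1}$, the edge rules force $\tb,\tc\in\tB_i$ (vertices of $\tS_{i+1}$ are only adjacent to $\tS_i\cup\tS_{i+1}$), and then (R$_i$) at $\ta$ transports the edge $bc$ back to an edge $\tb\tc$. When $\ta\in\tS_i$ the argument splits: for $\tb,\tc\in\tB_i$ use (T$_i$); for both $\tb,\tc\in\tS_{i+1}$, write $\tb=[\ta,b]$, $\tc=[\ta,c]$ and apply edge rule (3) directly (they share the base $\ta$ and $b\sim c$); for the mixed case $\tb\in\tB_i$, $\tc=[\ta,c]\in\tS_{i+1}$, I would first verify $(\tb,c)\in Z$ using (R$_i$)/(T$_i$) together with the local injectivity of $f_{i+1}$ at $\tb$ (any $\tc'\sim\tb$ in $\tG_i$ with $f_i(\tc')=c$ would also have to be adjacent to $\ta$, contradicting $(\ta,c)\in Z$), and then deduce $(\tb,c)\equiv(\ta,c)$ via condition (Z1) since $\ta\sim\tb$ and $c\in B_1(a)\cap B_1(b)$.

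For $\ta\in\tS_{i+1}$, write $\ta=[\tw,a]$ with $\tw\in\tS_i$. If $\tb,\tc\in\tB_i$, then $(\tb,a)\equiv(\tc,a)$; condition (Z2) would produce an induced square $u\tb a\tc$ placing $b,c$ on the diagonal, contradicting $b\sim c$, so (Z1) must apply and gives $\tb\sim\tc$. If $\tb\in\tB_i$ and $\tc\in\tS_{i+1}$, write $\ta=[\tb,a]=[\tw,a]$ and $\tc=[\tw,c]$; then Lemma \ref{lem-prop-Z}, applied with the equivalence $(\tw,a)\equiv(\tb,a)$, the couple $(\tw,c)\in Z$, and $c\sim b$, yields $(\tb,c)\in Z$ and $\tc=[\tb,c]$, so $\tb\sim\tc$ by rule (2).

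The main difficulty is the subcase $\tb,\tc\in\tS_{i+1}$ with $\tb=[\tw,b]$, $\tc=[\tw',c]$, $\ta=[\tw,a]=[\tw',a]$, and $\tw\neq\tw'$. I would analyze the equivalence $(\tw,a)\equiv(\tw',a)$. In case (Z1), $\tw\sim\tw'$ and so $w\sim w'$ in $G$; if both $w\nsim c$ and $w'\nsim b$ held, then $(w,b,c,w')$ would be an induced $4$-cycle with $a$ as a universal center, i.e., an induced $W_4$, forbidden by the $(W_4,\widehat{W}_5)$-condition. Hence $w\sim c$ or $w'\sim b$, and Lemma \ref{lem-prop-Z} reroutes the base of $\tc$ or $\tb$ to a common choice, producing $\tb\sim\tc$. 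In case (Z2) there is $\tu\in\tB_{i-1}$ with $\tu\sim\tw,\tw'$ and induced square $uwaw'$; (R$_i$) at $\tu$ combined with $(\tw,b),(\tw',c)\in Z$ forces $u\nsim b$ and $u\nsim c$. A four-way case split on whether $w\sim c$ and whether $w'\sim b$ then yields in the three subcases where at least one of these adjacencies holds an induced $W_4^-$ centered at $b$ or $c$ on the square $uwaw'$ (forbidden by Lemma \ref{K_2,3+W^-_4}), and in the remaining subcase (neither holds) an induced $5$-cycle $(u,w,b,c,w')$ with $a$ adjacent to four of its five vertices, i.e., an induced $W_5^-$ (forbidden by Lemma \ref{W__k}). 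Thus (Z2) cannot occur under the hypothesis $b\sim c$, and (Z1) handles everything. The main obstacle is this last analysis: ruling out (Z2) requires enumerating all local configurations and invoking each of the forbidden induced subgraphs $W_4$, $W_4^-$, $W_5^-$ to produce a contradiction.
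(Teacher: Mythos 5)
Your proposal is correct and follows essentially the same route as the paper: the same layer-by-layer case analysis driven by the edge rules of $\tG_{i+1}$, the same appeals to Lemma~\ref{lem-homomorphism}, Lemma~\ref{lem-prop-Z} and the properties (R$_i$)/(T$_i$), and the same terminal contradictions ($W_4$ and $W_5^-$) in the hardest case $\ta,\tb,\tc\in\tS_{i+1}$. The only divergence is organizational: where the paper first transfers $\tw\nsim\tc$, $\tw'\nsim\tb$ into $w\nsim c$, $w'\nsim b$ via the previously settled mixed case and then splits on $\tw\sim\tw'$, you split on (Z1)/(Z2) and eliminate the residual adjacencies with $W_4^-$ exclusions (Lemma~\ref{K_2,3+W^-_4}) and a Lemma~\ref{lem-prop-Z} rerouting of the base vertex -- a valid, equivalent reshuffling.
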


\begin{proof}
If $\tb \sim \tc$, then  $b \sim c$ by Lemma~\ref{lem-homomorphism}.
Conversely, suppose that $b \sim c$ in $G$.
If $\ta, \tb, \tc \in
\tB_i$, then  $\tb \sim \tc$ by condition (R$_i$) applied to
$\ta$. Therefore, further we will assume that at least one of the
vertices $\ta, \tb, \tc$ does not belong to $\tB_i$.

First suppose that $\ta \in \tB_i$. If $\tb, \tc \in \tS_{i+1}$, $\tb =
[\ta,b]$ and $\tc = [\ta,c]$. Since $b\sim c$, by construction, we
have $\tb \sim \tc$ in $\tG_{i+1}$. Suppose now that $\tb = [\ta,b]
\in S_{i+1}$ and $\tc \in \tB_i$. If there exists $\tb' \sim \tc$ in
$\tG_i$ such that $f_i(\tb') = b$, then by ($R_i$) applied to $\tc$,
$\ta \sim \tb'$ and $(\ta,b) \notin Z$, which is a contradiction. Thus
$(\tc,b) \in Z$ and since $\tc \sim \ta$, $[\tc,b]=[\ta,b]=\tb$, and
consequently, $\tc \sim \tb$. Therefore, let $\ta \in \tS_{i+1}$.

If $\tb, \tc \in \tB_i$ and $\ta \in \tS_{i+1}$, then
$\ta =[\tb,a] = [\tc,a]$ and either $\tb \sim \tc$, or there exists
$\tu \in \tS_{i-1}$ such that $\tu \sim \tb, \tc$ and $ubac$ is an
induced square in $G$, which is impossible because $b \sim c$.

If $\ta, \tb \in \tS_{i+1}$ and $\tc \in \tB_i$, then there exists
$\tw \in \tB_i$ such that $\tb = [\tw,b]$ and $\ta = [\tw,a] =
[\tc,a]$. By Lemma~\ref{lem-prop-Z}, $(\tc,b) \in Z$ and $\tb =
[\tw,b] = [\tc,b]$. Consequently, $\tc \sim \tb$.

If $\ta, \tb, \tc \in \tS_{i+1}$, there exist $\tw, \tw' \in \tB_i$
such that $\tb = [\tw,b]$, $\tc = [\tw',c]$ and $\ta = [\tw,a] =
[\tw',a]$. If $\tw \sim \tc$ or $\tw' \sim \tb$, then $\tb \sim \tc$ because
$b\sim
c$. Suppose now that $\tw \nsim \tc$, $\tw' \nsim
\tb$. From previous case applied to $\ta,\tb \in \tS_{i+1}$
(respectively, $\ta,\tc \in \tS_{i+1}$) and $\tw' \in \tB_i$ (respectively, $\tw \in
\tB_i$), it follows that $w \nsim c$ and $w' \nsim b$.  If $\tw \sim
\tw'$, then $a,b,w,w',c$ induce $W_4$ in $G$, which is
impossible. Since $[\tw,a] = [\tw',a]$, there exists $\tu \in
\tS_{i-1}$, such that $\tu \sim \tw, \tw'$ and such that $awuw'$ is an
induced square in $G$. If $u\sim b$, then by (R$_i$) applied to
$u$, there exists $\tb' \in \tB_i$ mapped to $b$ by $f_i$ such that
$\tb' \sim \tu, \tw$ and thus $(\tw,b) \notin Z$, which is a
contradiction. Using the same arguments, we have that $u \nsim c$ and
thus, $a,b,c,w',u,w$ induce $W_5^-$ in $G$, which is impossible.
\end{proof}

We can now prove that the image under $f_{i+1}$ of an induced triangle
or square is an induced triangle or square.

\begin{lemma} \label{cellular}
If $\widetilde{a}\widetilde{b}\widetilde{c}$ is a triangle in $\tG_{i+1}$, then
$abc$ is a triangle in $G$. If
$\widetilde{a}\widetilde{b}\widetilde{c}\widetilde{d}$ is an induced square of
$\tG_{i+1}$, then $abcd$ is an induced square in $G$. In particular,
the graph $\tG_{i+1}$ does not contain induced $K_{2,3}$ and $W^-_4$.
\end{lemma}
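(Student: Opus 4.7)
The plan is to leverage the three preceding lemmas (graph homomorphism, local injectivity, and the triangle-lemma) almost mechanically. For the triangle part, if $\widetilde{a}\widetilde{b}\widetilde{c}$ is a triangle in $\widetilde{G}_{i+1}$, then $ab, bc, ca$ are edges of $G$ by Lemma~\ref{lem-homomorphism}, and the three images are pairwise distinct because, viewing any one of the three vertices as a center, its two neighbors from the triangle are distinct in $\widetilde{G}_{i+1}$, so Lemma~\ref{lem-locally-injective} gives distinct images. Hence $abc$ is a triangle in $G$.

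For the square part, suppose $\widetilde{a}\widetilde{b}\widetilde{c}\widetilde{d}$ is an induced $4$-cycle of $\widetilde{G}_{i+1}$. Lemma~\ref{lem-homomorphism} yields the four edges $ab, bc, cd, da$ in $G$. Applying Lemma~\ref{lem-locally-injective} at $\widetilde{b}$ to the distinct neighbors $\widetilde{a},\widetilde{c}$ gives $a\neq c$, and similarly at $\widetilde{a}$ with the neighbors $\widetilde{b},\widetilde{d}$ gives $b\neq d$, so all four images are distinct. To verify that $abcd$ is induced, I apply Lemma~\ref{lem-triangles} with $\widetilde{b}$ and its neighbors $\widetilde{a},\widetilde{c}$: since $\widetilde{a}\nsim\widetilde{c}$, we get $a\nsim c$ in $G$, and symmetrically $b\nsim d$. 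Thus $abcd$ is an induced square of $G$.

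Finally, for the ``in particular'' statement, assume for contradiction that $\widetilde{G}_{i+1}$ contains an induced $K_{2,3}$ (say with parts $\{\widetilde{a},\widetilde{e}\}$ and $\{\widetilde{b},\widetilde{c},\widetilde{d}\}$) or an induced $W_4^-$. Using Lemma~\ref{lem-homomorphism} the edges of this subgraph carry over to edges of $G$; using Lemma~\ref{lem-locally-injective} at a common neighbor we see that all the image vertices are pairwise distinct; and using Lemma~\ref{lem-triangles} at each vertex that sees two non-adjacent others, every non-edge of the subgraph stays a non-edge in $G$. Hence the images induce exactly the same $K_{2,3}$ or $W_4^-$ in $G$, contradicting Lemma~\ref{K_2,3+W^-_4}. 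No separate obstacle is expected; the only subtlety is matching the correct ``center vertex'' to apply each of Lemmas~\ref{lem-locally-injective} and \ref{lem-triangles}, which the square argument above already illustrates.
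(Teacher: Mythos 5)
Your proposal is correct and follows essentially the same route as the paper: edges are preserved by the homomorphism lemma, distinctness comes from local injectivity, non-adjacency of the relevant pairs from the triangle lemma, and the $K_{2,3}$/$W_4^-$ case is reduced to Lemma~\ref{K_2,3+W^-_4}. The only (harmless) difference is that for the triangle case the paper notes that the homomorphism property alone already gives pairwise distinct images, so the extra appeal to local injectivity there is redundant.
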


\begin{proof}
For triangles, the assertion follows directly from Lemma~\ref{lem-homomorphism}.
Consider now a square $\ta\tb\tc\td$. From
Lemmas~\ref{lem-homomorphism} and \ref{lem-locally-injective}, the vertices $a,
b,
c,$ and $d$ are pairwise distinct and $a\sim b$, $b \sim c$, $c \sim d$, $d
\sim a$. From Lemma~\ref{lem-triangles}, $a\nsim c$ and $b \nsim
d$. Consequently, $abcd$ is an induced square in $G$.

Now, if $\tG_{i+1}$ contains an induced $K_{2,3}$ or $W^-_4,$ from the first
assertion
and Lemma \ref{lem-triangles} we conclude that the image under $f_{i+1}$ of this
subgraph
will be an induced $K_{2,3}$ or  $W^-_4$
in the graph $G$, a contradiction.
\end{proof}

The second assertion of Lemma~\ref{cellular} implies that replacing all 3-cycles
and all induced 4-cycles of $\widetilde{G}_{i+1}$ by triangle- and
square-cells, we will obtain a triangle-square flag complex, which we denote by
$\widetilde{{\bX}}_{i+1}.$
Then obviously $\widetilde{G}_{i+1}=G(\widetilde{{\bX}}_{i+1}).$
The first assertion  of Lemma~\ref{cellular} and the flagness of $\bX$ imply that $f_{i+1}$ can be
extended
to a cellular map from $\widetilde{\bX}_{i+1}$ to $\bX$: $f_{i+1}$
maps a triangle $\widetilde{a}\widetilde{b}\widetilde{c}$ to the triangle $abc$
of
${\bX}$ and a square $\widetilde{a}\widetilde{b}\widetilde{c}\widetilde{d}$ to
the
square $abcd$ of ${\bX}$.

\begin{lemma}\label{Ri+1}\label{Ti+1}
$f_{i+1}$ satisfies the conditions $(R_{i+1})$ and $(T_{i+1}).$
\end{lemma}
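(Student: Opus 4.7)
The plan is to assemble the lemmas established so far. Both conditions assert that $f_{i+1}$ restricted to the closed $1$-ball around a vertex $\widetilde{u}$ is an isomorphism; what differs is that $(R_{i+1})$ demands surjectivity of this restriction onto $B_1(u,G)$, whereas $(T_{i+1})$ only asks for an isomorphism onto the image $f_{i+1}(B_1(\widetilde{w},\tG_{i+1}))$. So in both cases one must verify (a) vertex injectivity on the ball, (b) preservation of edges, and (c) reflection of edges (no extra edges in the image), and in the first case additionally (d) vertex surjectivity onto $B_1(u,G)$.

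First I would handle $(R_{i+1})$. Fix $\widetilde{u}\in \widetilde{B}_i$ and set $u=f_{i+1}(\widetilde{u})$. Injectivity of $f_{i+1}$ on $B_1(\widetilde{u},\tG_{i+1})$ is Lemma~\ref{lem-locally-injective} applied with $\widetilde{a}=\widetilde{u}$. Surjectivity onto $B_1(u,G)$ is exactly Lemma~\ref{lem-locally-surjective}. Edges of $\tG_{i+1}$ with an endpoint $\widetilde{u}$ map to edges of $G$ by Lemma~\ref{lem-homomorphism}. Finally, if $\widetilde{b}$ and $\widetilde{c}$ are two neighbors of $\widetilde{u}$ in $\tG_{i+1}$, Lemma~\ref{lem-triangles} (applied with $\widetilde{a}=\widetilde{u}$) gives $\widetilde{b}\sim\widetilde{c}$ iff $b\sim c$ in $G$. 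This shows that $f_{i+1}$ restricts to an isomorphism between the subgraph induced by $B_1(\widetilde{u},\tG_{i+1})$ and the subgraph induced by $B_1(u,G)$, establishing $(R_{i+1})$.

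For $(T_{i+1})$, fix $\widetilde{w}\in \widetilde{S}_{i+1}$ and set $w=f_{i+1}(\widetilde{w})$. Again Lemma~\ref{lem-locally-injective} (with $\widetilde{a}=\widetilde{w}$) yields injectivity of $f_{i+1}$ on $B_1(\widetilde{w},\tG_{i+1})$, so in particular no two distinct vertices of this ball have the same image, and the image is $f_{i+1}(B_1(\widetilde{w},\tG_{i+1}))$. Edges with an endpoint $\widetilde{w}$ map to edges by Lemma~\ref{lem-homomorphism}, while for any pair of neighbors $\widetilde{b},\widetilde{c}$ of $\widetilde{w}$ Lemma~\ref{lem-triangles} again gives $\widetilde{b}\sim\widetilde{c}$ iff $b\sim c$. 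Since $(T_{i+1})$ does not require surjectivity onto $B_1(w,G)$, this is all that is needed, and we conclude that $f_{i+1}$ defines an isomorphism between the subgraph of $\tG_{i+1}$ induced by $B_1(\widetilde{w},\tG_{i+1})$ and the subgraph of $G$ induced by its image.

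No step here presents a genuine obstacle: all of the combinatorial work has already been absorbed into the previous lemmas (in particular the delicate case analysis in Lemma~\ref{equiv} and Lemma~\ref{lem-triangles}). The present lemma is simply the packaging of those facts into the two invariants that drive the inductive construction of $\widetilde{{\bX}}$.
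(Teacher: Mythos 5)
Your proof is correct and follows essentially the same route as the paper: the paper likewise deduces $(T_{i+1})$ from Lemmas~\ref{lem-locally-injective} and~\ref{lem-triangles} (with Lemma~\ref{lem-homomorphism} implicitly supplying edge preservation) and then upgrades this to $(R_{i+1})$ via the local surjectivity of Lemma~\ref{lem-locally-surjective}. Your write-up merely spells out the individual isomorphism requirements a bit more explicitly than the paper does.
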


\begin{proof}
From Lemmas~\ref{lem-locally-injective} and~\ref{lem-triangles}, we
know that for any $\tw \in \tB_{i+1}$, $f_{i+1}$ induces an
isomorphism between the subgraph of $\tG_{i+1}$ induced by $B_1(\tw,\tG_{i+1})$
and the subgraph of $G$ induced by
$f_{i+1}(B_1(\tw,\tG_{i+1}))$. Consequently, the condition  (T$_{i+1}$) holds.
From Lemma~\ref{lem-locally-surjective}, we know that
$f_{i+1}(B_1(\tw,\tG_{i+1})) = B_1(w,G)$ and consequently
(R$_{i+1}$) holds as well.
\end{proof}

\begin{lemma} \label{Si+1} For any $\widetilde{w},\widetilde{w}'\in
  \widetilde{B}_{i}$ such that the vertices
  $w=f_{i+1}(\widetilde{w}),w'=f_{i+1}(\widetilde{w}')$ belong to a square
  $ww'u'u$ of ${\bX}$, there exist $\widetilde{u},\widetilde{u}'\in
  \widetilde{B}_{i+1}$ such that $f_{i+1}(\widetilde{u})=u,
  f_{i+1}(\widetilde{u}')=u'$ and
$\widetilde{w}\widetilde{w}'\widetilde{u}'\widetilde{u}$
  is a square of $\widetilde{{\bX}}_{i+1},$ i.e., $\widetilde{{\bf
      X}}_{i+1}$ satisfies the property $(S_{i+1}).$
\end{lemma}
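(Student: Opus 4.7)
The plan is to prove $(S_{i+1})$ by a case analysis on the locations of $\tw, \tw'$ in $\tB_{i-1}$ versus $\tS_i$. In every case I will exhibit $\tu, \tu' \in \tB_{i+1}$ with the claimed images and then verify that $\tw\tw'\tu'\tu$ is an induced $4$-cycle of $\tG_{i+1}$ (hence a square in the flag complex $\widetilde{\bX}_{i+1}$) by combining Lemma~\ref{lem-triangles} with the non-adjacencies $w\nsim u'$ and $w'\nsim u$ coming from the square $ww'u'u$.

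Case~1 ($\tw,\tw'\in\tB_{i-1}$) is immediate: $(S_i)$ produces $\tu,\tu'\in\tB_i\subset\tB_{i+1}$ and a square in $\widetilde{\bX}_i\subset\widetilde{\bX}_{i+1}$. In Case~2 ($\tw\in\tB_{i-1}$, $\tw'\in\tS_i$), I first use $(R_i)$ at $\tw$ to obtain the unique $\tu\in\tB_i$ adjacent to $\tw$ with $f_i(\tu)=u$. If $\tu\in\tB_{i-1}$, $(S_i)$ applied to $(\tw,\tu)$ on the square $wuu'w'$ yields $\tu'$, and the resulting lift of $w'$ is identified with $\tw'$ by local injectivity of $(R_i)$ at $\tw$. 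If $\tu\in\tS_i$, I split further: either no neighbor of $\tu$ in $\tG_i$ lifts $u'$, whence $(\tu,u'),(\tw',u')\in Z$, and condition (Z2), applied with $\tw\in\tB_{i-1}$ as common neighbor of $\tu,\tw'$ and using the square $wuu'w'$ in $\bX$, gives $[\tu,u']=[\tw',u']=:\tu'$, which is adjacent to both $\tu$ and $\tw'$ via type-(2) edges; or there exists $\tu^*\in\tB_i$ adjacent to $\tu$ mapping to $u'$, in which case a forbidden-subgraph argument in $G$ (invoking any hypothetical common $\tB_{i-1}$-neighbor of $\tu^*$ and $\tw'$ returned by QC$(\tv)$ and checking that $\{w,w_1,u,u',w'\}$ would induce $W_4$ or $W_4^-$) forces $\tu^*\sim\tw'$ in $\tG_i$.

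Case~3 ($\tw,\tw'\in\tS_i$) is the heart of the proof. By TC$(\tv)$ there is $\tw^*\in\tB_{i-1}$ adjacent to both $\tw,\tw'$, so that $w^*ww'$ is a triangle of $G$ sharing the edge $ww'$ with the square $ww'u'u$. A case check shows that $w^*\sim u$ or $w^*\sim u'$ would produce a $W_4$ or $W_4^-$ on $\{w^*,w,w',u,u'\}$, so $w^*\nsim u,u'$ and we have a genuine house in $\bX$. Applying the house condition yields $y\in G$ with $y\sim w^*,u,u'$ and $y\nsim w,w'$, so that $\{w^*,w,w',u,u',y\}$ forms a $3$-prism. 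Lift $y$ by $(R_i)$ at $\tw^*$ to $\ty\in\tB_i$ adjacent to $\tw^*$. If $\ty\in\tB_{i-1}$, apply $(S_i)$ to $(\tw^*,\ty)$ first on the square $w^*wuy$ and then on $w^*w'u'y$ to obtain $\tu$ adjacent to $\tw,\ty$ and $\tu'$ adjacent to $\tw',\ty$ in $\widetilde{\bX}_i$; the adjacency $\tu\sim\tu'$ then follows from $(R_i)$ at $\ty$ since $u\sim u'$. If $\ty\in\tS_i$, the same vertices $\tu,\tu'$ are produced by Lemma~\ref{lem-locally-surjective} at $\ty$ together with the (Z1)/(Z2) gluing around $\ty$. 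In either subcase, Lemma~\ref{lem-triangles} applied at $\tw$ (respectively $\tw'$) using the neighborhoods guaranteed by $(R_i)$/$(T_i)$ converts the non-adjacencies $w\nsim u'$ and $w'\nsim u$ into $\tw\nsim\tu'$ and $\tw'\nsim\tu$, so that $\tw\tw'\tu'\tu$ is induced.

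The main obstacle is Case~3: the lifted triangle $\tw^*\tw\tw'$ and the square $ww'u'u$ do not glue inside $\widetilde{\bX}_i$ by any single application of $(S_i)$. The house condition in $\bX$ is invoked precisely to produce the prism "roof" $y$, which supplies a single base vertex $\ty$ in $\tB_i$ from which both $\tu$ and $\tu'$ can be lifted coherently and shown adjacent via the local isomorphisms $(R_i)$ and $(T_i)$. Throughout the case analysis, the $(W_4,\widehat W_5)$-condition and the absence of induced $K_{2,3}$ and $W_4^-$ are repeatedly used to exclude parasitic adjacencies, guaranteeing that the lifted $4$-cycle is induced and thus supports a square $2$-cell in $\widetilde{\bX}_{i+1}$.
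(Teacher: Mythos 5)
There is a genuine gap, and it sits exactly where the paper's proof does most of its work: the case analysis over whether $u$ and $u'$ \emph{already} have lifts in $\tB_i$ adjacent to $\tw$, $\tw'$ (or to your auxiliary vertex $\ty$). In your Case~2 with $\tu\in\tS_i$, your dichotomy is incomplete and its second branch is unsubstantiated. In branch (a) you form $[\tw',u']$, which requires $(\tw',u')\in Z$, i.e.\ that $\tw'$ has \emph{no} neighbor in $\tB_i$ mapped to $u'$; your hypothesis only concerns neighbors of $\tu$, so the configuration ``$\tw'$ has an old lift of $u'$ but $\tu$ does not'' is never treated. In branch (b), the claim that a single forbidden-subgraph check on $\{w,w_1,u,u',w'\}$ (with $w_1$ undefined, and with QC$(\tv)$ applied to $\tu^*$ and $\tw'$, which need not even lie at the same distance from $\tv$ nor have a farther common neighbor in $\tG_i$) ``forces'' $\tu^*\sim\tw'$ cannot work as stated: in the paper this configuration is shown to be \emph{impossible}, and the impossibility proof needs TC$(\tv)$, the house condition to produce a roof vertex, QC$(\tv)$ to produce a vertex one level down, and only then a $K_{2,3}$/$W_4^-$ contradiction -- a chain your one-line argument does not reproduce, and which in particular cannot avoid the house condition (which you never invoke in Case~2).

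The same omission recurs in your Case~3. Your subcase $\ty\in\tB_{i-1}$ is fine (two applications of $(S_i)$ across the squares $w^*wuy$ and $w^*w'u'y$ plus $(R_i)$ at $\ty$ do glue correctly), but the subcase $\ty\in\tS_i$ is precisely the hard one and ``Lemma~\ref{lem-locally-surjective} together with the (Z1)/(Z2) gluing around $\ty$'' is not a proof. The (Z2) identification $[\ty,u]=[\tw,u]$ presupposes $(\tw,u)\in Z$ and $(\ty,u)\in Z$, i.e.\ that neither $\tw$ nor $\ty$ has an old neighbor in $\tB_i$ mapped to $u$ (and likewise for $\tw'$, $\ty$, $u'$). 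When such old lifts do exist, the required adjacencies ($\tu\sim\tw'$-side vertices, $\tu\sim\tu'$, $\tu\sim\ty$, \dots) must be established by separate arguments, and in several configurations one must instead prove the configuration cannot occur at all; this is the content of the paper's subcases ``$\tu',\tu\in\tS_i$'' and ``$\tu\in\tS_i$'', each requiring TC/QC, the house condition, an application of the already-proved case, and forbidden-subgraph contradictions. Your outline follows the paper's general strategy (level structure, the prism roof $y$, (Z2) gluing for genuinely new vertices), but without these additional case distinctions and impossibility arguments the claimed square $\tw\tw'\tu'\tu$ is not produced in all situations, so the proof of $(S_{i+1})$ is incomplete.
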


\begin{proof}

By Lemma~\ref{Ri+1} applied to $\tw$ and $\tw'$, we know that in
$\tG_{i+1}$ there exist a unique $\tu$ (respectively, a unique $\tu'$) such
that $\tu\sim\tw$ (respectively, $\tu'\sim\tw'$) and $f_{i+1}(\tu)=u$
(respectively, $f_{i+1}(\tu)=u'$). By Lemma~\ref{Ri+1}, $\tw$ (respectively, $\tw'$)
is the unique neighbor of $\tu$ (resp. $\tu$) mapped to $w$
(respectively, $w'$) by $f_{i+1}$.

Note that if $\tw, \tw' \in \tB_{i-1}$, the lemma holds by
condition (S$_i$). Let us assume further that $\tw \in \tS_i$.

\medskip
\noindent{\textbf{Case 1.} $\tw' \in \tS_{i-1}$.}

\medskip
If $\tu' \in \tB_{i-1}$, by (S$_i$) applied to $\tw'$ and $\tu'$, we
conclude that $\tw\tw'\tu'\tu$ is a square in $\tG_{i+1}$.

If $\tu' \in \tS_i$ and $\tu \in \tS_{i-1}$, then Lemma~\ref{Ri+1}
applied to $\tw$, implies that $\tu$ is not adjacent to $\tw'$. Thus,
by the quadrangle condition QC($\tv$), there exists $\tx \in \tS_{i-2}$
such that $\tx\sim\tu, \tw'$. Hence, $w,w',u,u',x$ induce in $G$
a forbidden $K_{2,3}, W_4^-$, or $W_4$, which is impossible.

Suppose now that $\tu', \tu \in \tS_i$. By TC($\tv$), there exists
$\tx \in \tS_{i-1}$ different from $\tw'$ such that $\tx\sim\tu,\tw$.
Since $G$ does not contain $W_4^-$ or $W_4$, $x \nsim u', w'$ and
the vertices $u,w,w',u',x$ induce a house.  By the 3-prism condition
there exists $y$ in $G$ such that $y\sim x,u',w'$ and $y \nsim
u,w$.  Since $x\nsim w'$, by R$_{i}$ applied to $\tw$, $\tx \nsim
\tw'$.  Applying QC($\tv$), there exists $\tz \in \tS_{i-2}$ such that
$\tz\sim\tx,\tw'$ and $\tz \nsim \tw$. Since $\tz \in \tS_{i-2}$, $\tz
\nsim \tu'$ and thus by R$_{i+1}$ applied to $\tw'$, $z \nsim
u'$. Consequently, $z \neq y$.  Thus, from Lemma~\ref{cellular},
$xzw'w$ is an induced square of $G$ and $y,x,z,w',w$ induce a
$K_{2,3}$ if $z\nsim y$ and $W_4^-$ otherwise, which is impossible.
Note that if $\tu'$ has a neighbor $\tu_2$ in $\tB_i$ mapped to $u$ ,
then, exchanging the roles of $\tu'$ and $\tw$, we also get a
contradiction.  Suppose now that neither $\tw$ nor $\tu'$ has a
neighbor in $\tB_i$ mapped to $u$. Thus, $(\tw,u), (\tu',u) \in Z$ and
since $\tw'\in \tS_{i-1}$ is adjacent to $\tw$ and $\tu'$, $(\tw,u)
\equiv (\tu',u)$. Consequently, $\tw\tw'\tu'[\tw,u]$ is a square of
$\tG_{i+1}$ which is mapped by $f_{i+1}$ to the square $ww'u'u$.

\medskip
\noindent{\textbf{Case 2.} $\tw' \in \tS_{i}$.}

\medskip

If $\tu \in \tS_{i-1}$, exchanging the role of $\tw'$ and $\tu$, we
are in the previous case and thus there exists $\tu'' \sim \tw',\tu$
such that $f_{i+1}(\tu'') = u'$. By Lemma~\ref{Ri+1}, we get that
$\tu'=\tu''$ and we are done. For the same reasons, if $\tu' \in
\tS_{i-1}$, applying Case 1 with $\tw'$ in the role of $\tw$ and
$\tu'$ in the role of $\tw'$, we are done.

If $\tu \in \tS_{i}$, by TC($\tv$) there exists $\tx
\in \tB_{i-1}$ such that $\tx\sim \tw,\tu$. Thus, in $G$ there
exists $x \sim u,w$ and, since $G$ does not contain $W_4$ or
$W_4^-$, $x\nsim u',w'$. Applying the 3-prism condition, we get $y$ in $G$
such that $y \sim u',w',x$ and $y \nsim u,w$. Applying the
previous case to $\tw,\tx$ and the square $wxyw'$ of $G$, we know
that there exists $\ty \in \tB_{i}$ such that $\tw\tx\ty\tw'$ is an
induced square in $\tG_{i+1}$. From Lemma~\ref{Ri+1} applied to
$\tw'$, we deduce that $\ty\sim\tu'$. Applying (S$_i$) to $\tx,\ty$
and to the square $xyu'u$, we get that $\tu\sim\tu'$, thus $\tw\tw'\tu'\tu$ is a
square in $\tG_{i+1}$.  If $\tu' \in \tS_i$, then exchanging the roles of
$\tw, \tw', \tu, \tu'$ we also get that $\tw\tw'\tu'\tu$ is a square
in $\tG_{i+1}$.

Suppose now that $\tw$ has no neighbor in $\tB_i$ mapped to $u$  and
that $\tw'$ has no neighbor in $\tB_i$ mapped to $u'$. Thus, there
exist $[\tw,u]$ and $[\tw',u']$ in $\tS_{i+1}$. By TC($\tv$), there
exists $\tx \in \tS_{i-1}$ such that $\tx \sim \tw, \tw'$. In
$G$, $x \sim w, w'$ and $x \nsim u, u'$ since $G$ does not
contain $W_4^-$ or $W_4$. Applying the 3-prism condition, there is a
vertex $y$ in $G$ such that $y \sim u, u'$ and $y \nsim w, w'$.
By (R$_i$) applied to $\tx$, there exists $\ty$ in $\tB_i$ such that
$\ty \sim \tx$ and $\ty\nsim \tw,\tw'$. If $\ty$ has a neighbor in $\tB_i$
mapped to $u$, then applying the previous case to $\tw,\tx$ and
the square $wxyu$, we conclude that $\tw$ has a neighbor in $\tB_i$
mapped to $u$, which is impossible.
Consequently, $(\ty,u) \in Z$, and since there is
$\tx \in S_{i-1}$ such that $\tx \sim \tw, \ty$ and $wxyu$ is an
induced square in $G$, $(\ty,u) \equiv (\tw,u)$. Using the same
arguments, one can show that there exists $(\ty,u') \in [\tw',u']$. Since
$yuu'$ is a triangle in $G$, and since $[\tw,u] = [\ty, u]$ and
$[\tw',u'] = [\ty, u']$, there is an edge in $\tG_{i+1}$ between
$[\tw,u]$ and $[\tw',u']$. Consequently, $\tw\tw'[\tw',u'][\tw,u]$ is a
square of $\tG_{i+1}$ satisfying the lemma.
\end{proof}

\subsubsection{The universal cover $\tX$.}
\label{s:univ}
Let $\widetilde{\bX}_v$ denote the triangle-square complex obtained as the
directed union
$\bigcup_{i\ge 0} \widetilde{\bX}_i$ with a vertex $v$ of $\bX$ as the
basepoint. Denote by $\tG_v$ the
1-skeleton of $\widetilde{\bX}_v.$ Since each $\tG_i$
is weakly modular with respect to $\tv,$ the graph $\tG_v$ is also weakly
modular with respect to $\tv$. Thus the complex $\widetilde{\bX}_v$ is
simply connected by virtue of Lemma \ref{simplyconnected}. Let also
$f=\bigcup_{i\ge 0}f_i$ be the map from $\widetilde{\bX}_v$ to $\bX$.

\begin{lemma} \label{covering_map}
For any $\tw \in \widetilde{\bX}_v$,
$\mbox{St}(\tw,\widetilde{\bX}_v)$ is isomorphic to $\mbox{St}(w,\bX)$
where $w=f(\tw)$. Consequently,
$f:~\!\!\widetilde{{\bX}}_v\rightarrow~\!\!\bX$ is a covering map.
\end{lemma}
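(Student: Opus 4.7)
The plan is to fix a vertex $\tw \in \widetilde{\bX}_v$ at some level $i$ (so either $\tw = \tv$ with $i=0$, or $\tw \in \tS_i$), and to show that the star of $\tw$ is already determined inside $\widetilde{\bX}_{i+1}$, after which the required isomorphism becomes a direct consequence of property $(R_{i+1})$. The first step would be to observe that $B_1(\tw, \tG_v) = B_1(\tw, \tG_{i+1})$: any edge added in passing from $\tG_j$ to $\tG_{j+1}$ with $j \geq i+1$ is incident to a vertex of $\tS_{j+1}$, and by rules (2) and (3) defining the edges of $\tG_{j+1}$, such a vertex is adjacent only to vertices of $\tS_j \cup \tS_{j+1}$. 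Since $\tw$ sits at level $i \leq j-1$, no such edge is ever incident to $\tw$.

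Given this stabilization, property $(R_{i+1})$, established in Lemma~\ref{Ri+1}, immediately provides a graph isomorphism between the subgraph of $\tG_{i+1}$ induced by $B_1(\tw, \tG_{i+1})$ and the subgraph of $G$ induced by $B_1(w,G)$. To lift this to an isomorphism of triangle-square complexes $\mbox{St}(\tw, \widetilde{\bX}_v) \to \mbox{St}(w, \bX)$, I would use that both complexes are flag triangle-square complexes: their $2$-cells are exactly the $3$-cycles and the induced $4$-cycles of their $1$-skeletons. Combined with Lemma~\ref{cellular}, which guarantees that $f$ sends triangles to triangles and induced squares to induced squares, the graph isomorphism on $B_1$ automatically upgrades to a cellular isomorphism of the stars.

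For the covering-map conclusion it remains to verify that $f$ is cellular and surjective. Cellularity is built into the definition of $f$ by Lemma~\ref{cellular}. Surjectivity follows by observing that $v$ lies in the image of $f$ and that Lemma~\ref{lem-locally-surjective}, applied inductively, shows the image is closed under passing to neighbors in $G$; connectedness of $G$ then forces $f$ to be surjective on vertices, hence on all cells. Cellularity, surjectivity, and the local isomorphism on every star together constitute exactly the definition of a covering map recalled in Section~\ref{prel}. The only conceptually non-routine ingredient is the stabilization $B_1(\tw,\tG_v) = B_1(\tw,\tG_{i+1})$; everything else is bookkeeping of the properties already established for each $f_i$.
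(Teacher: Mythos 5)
There is a genuine gap at the step where you claim the graph isomorphism on $B_1$ ``automatically upgrades'' to an isomorphism of stars. In a triangle-square complex the star of $w$ is not spanned by $B_1(w,G)$: every square of $\bX$ containing $w$ contributes a vertex at distance $2$ from $w$ (the corner opposite to $w$), so $V(\mbox{St}(w,\bX))$ properly contains $B_1(w,G)$ whenever squares are present. Your stabilization observation $B_1(\tw,\tG_v)=B_1(\tw,\tG_{i+1})$ together with $(R_{i+1})$ controls only the $1$-ball, and Lemma~\ref{cellular} only pushes cells of $\widetilde{\bX}_v$ \emph{down} to cells of $\bX$; neither gives you (a) that every square $wuxu'$ of $\bX$ through $w$ \emph{lifts} to a square of $\widetilde{\bX}_v$ through $\tw$, nor (b) that $f$ is injective on the distance-two (diagonal) vertices of $\mbox{St}(\tw,\widetilde{\bX}_v)$. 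Point (a) is exactly the square-lifting property $(S_{i+2})$, which you never invoke (the paper applies $(R_{i+2})$ to find two non-adjacent lifts $\tu,\tu'$ of $u,u'$ adjacent to $\tw$, and then $(S_{i+2})$ to produce $\tx$ with $f(\tx)=x$). Point (b) is not a formal consequence of Lemma~\ref{lem-locally-injective}, which only concerns neighbors of a common vertex: if $\tu,\tu'$ are two diagonal vertices of the star with $f(\tu)=f(\tu')$, they need not share a neighbor in an obvious way, and the paper rules this out by a case analysis (both in $B_1(\tw,\tG_v)$, one in and one out, both out) using $(R_{i+1})$, $(R_{i+2})$ and the absence of induced $K_{2,3}$, $W_4^-$, $W_4$ in $G$. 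Without (a) the star map need not be surjective; without (b) it need not be injective; so the ``bookkeeping'' you defer is in fact the substance of the lemma.

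A secondary, smaller point: because the star of $\tw\in\tB_i$ contains vertices of $\tB_{i+2}$, the relevant properties are those indexed $i+2$ (the paper works inside $\tB_{i+2}$ with $(R_{i+2})$ and $(S_{i+2})$), not just $(R_{i+1})$. Your treatment of the remaining ingredients for the covering-map conclusion (cellularity via Lemma~\ref{cellular}, surjectivity on vertices via Lemma~\ref{lem-locally-surjective} and connectedness of $G$) matches the paper and is fine, but the core of the proof — the bijection of star vertex-sets and the matching of squares, which in the paper also requires flagness of both complexes together with $(R_{i+2})$ and $(S_{i+2})$ — is missing from your argument.
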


\begin{proof}
Note that, since $\widetilde{\bX}_v$ is a flag complex, a vertex $\tx$
of $\widetilde{\bX}_v$ belongs to $\mbox{St}(\tw,\widetilde{\bX}_v)$ if and only if either $\tx \in
B_1(\tw,\tG_v)$ or $\tx$ has two non-adjacent neighbors in
$B_1(\tw,\tG_v)$.

Consider a vertex $\tw$ of $\btX_v$. Let $i$ be the distance between
$\tv$ and $\tw$ in $\tG_v$ and consider the set $\tB_{i+2}$.  Then the
vertex-set of $\mbox{St}(\tw,\widetilde{\bX}_v)$ is included in
$\tB_{i+2}$. From ($R_{i+2}$) we know that $f$ is an isomorphism
between the graphs induced by $B_1(\tw,\tG_v)$ and $B_1(w,G).$

For any vertex $x$ in  $\mbox{St}(w,\bX)\setminus B_1(w,G)$ there exists an
induced square $wuxu'$ in $G$. From ($R_{i+2}$), there exist $\tu,
\tu' \sim \tw$ in $\tG_v$ such that $\tu \nsim \tu'$. From ($S_{i+2}$) applied
to $\tw,\tu$ and since $\tw$ has a unique neighbor $\tu'$ mapped to
$u'$, there exists a vertex $\tx$ in  $\tG_v$ such that $f(\tx) = x$, $\tx \sim
\tu, \tu'$ and $\tx \nsim \tw$. Consequently, $f$ is a surjection
from $V(\mbox{St}(\tw,\widetilde{\bX}_v))$ to $V(\mbox{St}(w,\bX))$.

Suppose by way of contradiction that there exist two distinct vertices
$\tu, \tu'$ of $\mbox{St}(\tw,\widetilde{\bX}_v)$ such that $f(\tu) =
f(\tu') = u$. If $\tu,\tu' \sim \tw$, by condition ($R_{i+1}$) applied
to $\tw$ we get a contradiction.  Suppose now that $\tu \sim \tw$ and
$\tu' \nsim \tw$ and let $\tz \sim \tw, \tu'$. This implies that $w,
u, z$ are pairwise adjacent in $G$. Since $f$ is an isomorphism
between the graphs induced by $B_1(\tw,\tG_v)$ and $B_1(w,G)$, we
conclude that $\tz \sim \tu$. But then $f$ is not locally injective
around $\tz$, contradicting the condition (R$_{i+2}$).  Suppose now
that $\tu, \tu' \nsim \tw$. Let $\ta, \tb \sim \tu, \tw$ and $\ta',
\tb' \sim \tu',\tw'$. If $\ta' = \ta$ or $\ta' = \tb$, then applying
(R$_{i+2}$) to $\ta'$, we get that $f(\tu)\neq f(\tu')$. Suppose now
that $\ta' \notin \{\ta,\tb\}$. Then the subgraph of $G$ induced by
$a',w,a,b,u$ is either $K_{2,3}$ if $a' \nsim a,b$, or $W_4$ if
$a'\sim a,b$, or $W_4^-$ otherwise. In all cases, we get a
contradiction.

Hence $f$ is a bijection between the vertex-sets of
$\mbox{St}(\tw,\widetilde{\bX}_v)$ and $\mbox{St}(w,\bX)$. Since
$\btX_v$ is a flag complex, by (R$_{i+2}$), $\ta\sim \tb$ in
$\mbox{St}(\tw,\widetilde{\bX}_v)$ if and only if $a \sim b$ in
$\mbox{St}(w,\bX)$.  By (R$_{i+2}$) applied to $w$ and since $\bX$ and
$\widetilde{\bX}_v$ are flag complexes, $\ta\tb\tw$ is a triangle in
$\mbox{St}(\tw,\widetilde{\bX}_v)$ if and only if $abw$ is a triangle
in $\mbox{St}(w,\bX)$. By ($R_{i+2}$) and since $\bX$ is a flag
complex, if $\ta\tb\tc\tw$ is a square in
$\mbox{St}(\tw,\widetilde{\bX})$, then $abcw$ is a square in
$\mbox{St}(w,\bX)$. Conversely, by the conditions ($R_{i+2}$) and
($S_{i+2}$) and flagness of $\widetilde{\bX}_v$, we conclude that if
$abcw$ is a square in $\mbox{St}(w,\bX)$, then $\ta\tb\tc\tw$ is a
square in $\mbox{St}(\tw,\widetilde{\bX}_v)$. Consequently, for any
$\tw \in \widetilde{\bX}_v$, $f$ defines an isomorphism between
$\mbox{St}(\tw,\widetilde{\bX}_v)$ and $\mbox{St}(w,\bX)$, and thus
$f$ is a covering map.
\end{proof}

\begin{lemma} \label{house-condition-tX}
$\widetilde{\bX}_v$ satisfies the $3$--prism, the $3$--cube, and the
  $\widehat{W}_5$--wheel conditions, and the graph $\tG_v$ does not contain
induced $K_{2,3},
  W_4^-,$ and $W_4$. Moreover, if $G$ is $W_5$--free, then $\tG_v$ is also
$W_5$--free.
\end{lemma}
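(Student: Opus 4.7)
The plan is to exploit the covering map $f\colon \widetilde{\bX}_v \to \bX$ from Lemma~\ref{covering_map}, which restricts to an isomorphism $\mbox{St}(\widetilde{z},\widetilde{\bX}_v) \to \mbox{St}(f(\widetilde{z}),\bX)$ at every vertex $\widetilde{z}$. The key observation is that each forbidden subgraph and each local condition involves a configuration that either lies inside a single star (which then pushes down along $f$ to give the same configuration in $\bX$), or lies inside the union of a few stars, in which case compatibility of lifts is controlled by uniqueness in the intersections.

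For the forbidden induced subgraphs $K_{2,3}$, $W_4$, $W_4^-$ (and $W_5$ in the addendum), I would show that the entire subgraph sits in $\mbox{St}(\widetilde{z},\widetilde{\bX}_v)$ for a suitably chosen vertex $\widetilde{z}$, using flagness of $\widetilde{\bX}_v$ to recognize $3$-cycles as triangle cells and induced $4$-cycles as square cells. For $W_4$ and $W_5$ I take $\widetilde{z}$ to be the central vertex, so that every outer triangle is a cell containing $\widetilde{z}$. For a $K_{2,3}$ with parts $\{\widetilde{a},\widetilde{b}\}$ and $\{\widetilde{x}_1,\widetilde{x}_2,\widetilde{x}_3\}$ I take $\widetilde{z}=\widetilde{a}$, so that each induced $4$-cycle $\widetilde{a}\widetilde{x}_i\widetilde{b}\widetilde{x}_j$ is a square cell containing $\widetilde{a}$. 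For a $W_4^-$ with outer cycle $\widetilde{x}_1\widetilde{x}_2\widetilde{x}_3\widetilde{x}_4$ and center $\widetilde{c}\nsim\widetilde{x}_4$, I take $\widetilde{z}=\widetilde{c}$ and observe that the triangles $\widetilde{c}\widetilde{x}_1\widetilde{x}_2$, $\widetilde{c}\widetilde{x}_2\widetilde{x}_3$ together with the induced $4$-cycle $\widetilde{c}\widetilde{x}_1\widetilde{x}_4\widetilde{x}_3$ (which is a square cell since $\widetilde{c}\nsim\widetilde{x}_4$ and $\widetilde{x}_1\nsim\widetilde{x}_3$) all contain $\widetilde{c}$. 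In each case the star isomorphism carries the subgraph down to the same forbidden subgraph in $G$, contradicting Lemma~\ref{K_2,3+W^-_4}, the $(W_4,\widehat{W}_5)$-condition on $\bX$, or the hypothesis that $G$ is $W_5$-free.

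For the cube, house, and $\widehat{W}_5$-wheel conditions I would proceed by lifting. Given the local configuration in $\widetilde{\bX}_v$, apply $f$ to obtain the corresponding configuration in $\bX$, invoke the relevant condition in $\bX$ to produce a new vertex $y$ (or $p$), and then lift this vertex back to $\widetilde{\bX}_v$ via the star isomorphism at an appropriately chosen vertex. For the cube condition the three squares share a common vertex $\widetilde{c}$, so the entire configuration together with the completing vertex lives in $\mbox{St}(\widetilde{c})$ and a single lift suffices. For the house condition the new vertex $y$ is adjacent to $w$, so I would lift via $\mbox{St}(\widetilde{w})\to\mbox{St}(w)$ to obtain $\widetilde{y}$ together with lifts of $x_1,x_2$ in $\mbox{St}(\widetilde{w})$, and then use uniqueness of lifts in $\mbox{St}(\widetilde{u})$ and $\mbox{St}(\widetilde{v})$ to identify these with the original $\widetilde{x}_1,\widetilde{x}_2$ of the house. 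For the $\widehat{W}_5$-wheel condition I would lift $p$ via $\mbox{St}(\widetilde{c})$ to obtain $\widetilde{p}$ adjacent to $\widetilde{c},\widetilde{x}_1,\ldots,\widetilde{x}_5$, and separately lift $p$ via $\mbox{St}(\widetilde{a})$ to obtain a vertex $\widetilde{p}'$ adjacent to $\widetilde{a}$; both lifts are neighbors of $\widetilde{x}_1$ and both map to $p$, so uniqueness of lifts in $\mbox{St}(\widetilde{x}_1)$ forces $\widetilde{p}=\widetilde{p}'$.

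The main obstacle I expect is this bookkeeping of lifts across distinct stars, namely verifying that the vertex produced by one star isomorphism is the same as the vertex produced by another. The uniqueness-of-lifts property of a covering map does the work: whenever two lifts of the same vertex both lie in the star of a common third lifted vertex, they must coincide. Once this pattern is set up, each of the local conditions translates cleanly from $\bX$ to $\widetilde{\bX}_v$.
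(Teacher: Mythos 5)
Your overall strategy is the one the paper uses: push forbidden subgraphs down through a single star isomorphism, and for the local conditions push the configuration down, apply the condition in $\bX$, lift the completing vertex, and reconcile lifts across stars by local uniqueness. The forbidden-subgraph part and the house and $\widehat{W}_5$-wheel parts are essentially sound (modulo details noted below). However, your treatment of the cube condition contains a genuine error. You claim that, since the three squares share the vertex $\tc$, ``the entire configuration together with the completing vertex lives in $\mbox{St}(\tc)$ and a single lift suffices.'' This is false: $\bX$ and $\widetilde{\bX}_v$ are \emph{triangle-square} complexes, so $\mbox{St}(c,\bX)$ consists only of triangles and squares through $c$. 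The vertex $y$ produced by the cube condition in $\bX$ is the corner of the $3$-cube antipodal to $c$: it satisfies $y\nsim c$ and it has no common neighbor with $c$ inside the configuration (its neighbors are the three far corners $b,b',b''$ of the squares, none of which is adjacent to $c$), so in general $y$ lies in no triangle or square together with $c$ and hence $y\notin\mbox{St}(c,\bX)$. Consequently you cannot obtain its lift via the star isomorphism at $\tc$. The repair is exactly what the paper does: lift $y$ at the star of one of the far corners, say $\tb$ (where $y\sim b$ guarantees $y\in\mbox{St}(b,\bX)$), deduce $\ty\sim\tb',\tb''$ and $\ty\nsim\tx,\ta',\ta''$ from $\mbox{St}(\tb)$ (using the squares of the cube through $b$, e.g. $ba''b'y$), and then pass to $\mbox{St}(\tb')$ to get the last non-adjacency $\ty\nsim\ta$.

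A lighter but related omission: when you say ``apply $f$ to obtain the corresponding configuration in $\bX$,'' remember that a covering map preserves adjacency but does not automatically preserve \emph{non}-adjacency, so you must check that the image is an induced copy of the configuration before invoking the condition in $\bX$. For squares this is Lemma~\ref{cellular}; for the extended $5$-wheel it requires showing that $z=f(\ta)$ is distinct from and non-adjacent to $c,x_3,x_4,x_5$, and the case $z\sim x_4$ cannot be excluded by a common-neighbor argument alone --- the paper rules it out by observing that $x_1zx_4x_5$ would be a square in $\mbox{St}(x_1,\bX)$, forcing $\tz\sim\tx_4$ upstairs. With the cube-condition lift corrected and these descent-of-non-adjacency checks added, your argument coincides with the paper's proof.
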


\begin{proof}
If $\tG_v$ contains an induced $K_{2,3}$ or $W_4^-$, there exists $i$
such that $\tG_i$ contains an induced $K_{2,3}$ or $W_4^-$,
contradicting Lemma~\ref{cellular}. Let $C\in \{W_4,W_5\}$ be an
induced subgraph of $\tG_v$. By Lemma~\ref{covering_map} applied to
the center of the wheel, the subgraph induced by $f(V(C))$ is
isomorphic to $C$.  Since $G$ does not contain any induced $W_4$, the
graph $\tG_v$ also does not contain any induced $W_4$ and if $G$ is
$W_5$--free, $\tG_v$ is also $W_5$--free.

\medskip

\noindent
\emph{$3$--prism condition:}
Let $\tu\tu'\tw'\tw\tx$ be a house in $\widetilde{\bX}_v$ where,
$\tu\tu'\tw'\tw$ is a square and $\tu\tw\tx$ is a triangle.
Consider the image of this house by $f$, i.e.
$uu'w'wx$ in $\bX$.
If the image is not an induced subgraph of $\bX$ then, by Lemma~\ref{covering_map}
(applied to $\tu$), we have $x\sim w'$ and the vertices $x, u, u', w,
w'$ induce a $W_4^-$, a contradiction. Thus $uu'w'wx$ is an induced house in $\bX$.
By the $3$--prism
condition in $\bX$, there exists a vertex $y \in G$ such that $y\sim
u',w',x$ and $y \nsim u,w$. Since $f$ is locally bijective, there
exists $\ty \sim \tx$ such that $f(\ty) = y$. Since $f$ is an
isomorphism from $\mbox{St}(\tx,\widetilde{\bX}_v)$ to $\mbox{St}(x,\bX)$,
considering the
squares $xyu'u$ and $xyw'w$, we get that $\ty \sim \tu',\tw'$ and $\ty
\nsim \tu, \tw$. Thus, $\widetilde{\bX}_v$ satisfies the $3$--prism condition.
\medskip

\noindent
\emph{$3$--cube condition:} Consider three squares
$\tx\ta_1\tb_3\ta_2$, $\tx\ta_2\tb_1\ta_3$, $\tx\ta_3\tb_2\ta_1$ in
$\widetilde{\bX}_v$.
By Lemma~\ref{covering_map} applied to $\tx$, the images $xa_1b_3a_2$
of $\tx\ta_1\tb_3\ta_2$, $xa_2b_1a_3$ of $\tx\ta_2\tb_1\ta_3$ and
$xa_3b_2a_1$ of $\tx\ta_3\tb_2\ta_1$ are squares of $\bX$.
By the $3$--cube condition, in $G$ there exists a vertex $y$ such that
$y \sim b_i$ and $y \nsim x,a_i$, for all $i$. Moreover, for all
distinct $i,j$, $b_i \nsim a_i$ and $b_i \nsim b_j$. By
Lemma~\ref{covering_map} applied to $\ta_1, \ta_2, \ta_3$, for all
distinct $i,j$, $\tb_i \nsim \ta_i$  and $\tb_i \nsim \tb_j$.
Since $f$ is locally
bijective, there exists $\ty \sim \tb_1$ such that $f(\ty) = y$. Since
$f$ is an isomorphism from $\mbox{St}(\tb_1,\widetilde{\bX}_v)$ to
$\mbox{St}(b_1,\bX)$, we get that $\ty \sim \tb_2, \tb_3$ and $\ty
\nsim \ta_2,\ta_3$. When considering
$\mbox{St}(\tb_2,\widetilde{\bX}_v)$, we get that $\ty \nsim
\ta_1$. If $\ty \sim \tx$, $\tG_v$ contains an induced $K_{2,3}$, a
contradiction.  Thus, $\widetilde{\bX}$ also satisfies the $3$--cube
condition.
\medskip

\noindent
\emph{$\widehat{W}_5$--wheel condition:}
Consider $W_5$ in $\tG_v$ made of a $5$--cycle
$(\tx_1,\tx_2,\tx_3,\tx_4,\tx_5,\tx_1)$ and a vertex $\tc$ adjacent to all vertices
of this cycle. Suppose that
there exists a vertex $\tz$ such that $\tz \sim \tx_1,\tx_2$ and $\tz
\nsim \tx_3,\tx_4,\tx_5, \tc$. By Lemma~\ref{covering_map}, the vertices $c, x_1, x_2, x_3, x_4,
x_5$ are all distinct and they induce $W_5$ in $G$.
Moreover, $z \notin \{c, x_1, x_2, x_3,
x_4, x_5\}$, and $z\nsim c,x_3,x_5$.
Similarly, if $z\sim x_4$ then $zx_2x_3x_4$ is a square and, by
Lemma~\ref{covering_map} (applied to $\tx_2$), we have $\tz \sim
\tx_4$, a contradiction.
 By the
$\widehat{W}_5$--wheel condition for $\bX$, there exists $y \sim
c,z,x_1,x_2,x_3,x_4,x_5$ in $G$. Consider the
neighbor $\ty$ of $\tc$ such that $f(\ty)=y$. Since $\mbox{St}(\tc,\widetilde{\bX}_v)$
is isomorphic to $\mbox{St}(c,\bX)$, $\ty \sim \tx_1, \tx_2, \tx_3, \tx_4,
\tx_5$. Considering the star $\mbox{St}(\tx_1,\widetilde{\bX}_v)$, we conclude that $\ty
\sim \tz$.
Consequently,
$\widetilde{\bX}_v$ satisfies the $\widehat{W}_5$--wheel condition.
\end{proof}

Now, we are ready to complete the proof  of the implication  (ii)$\Leftrightarrow$(iii).
Let $\bX$ be a connected triangle-square
flag complex satisfying the local conditions of (ii).
By Lemma \ref{covering_map}, $f: \widetilde{\bX}_v\rightarrow {\bX}$ is
a covering map. By Lemma \ref{simplyconnected}, $\widetilde{\bX}_v$ is simply
connected, thus $\widetilde{\bX}_v$ is the universal cover $\widetilde{\bf
  X}$ of ${\bX}$. Therefore the triangle-square complexes
$\widetilde{\bX}_v, v\in V({\bX}),$ are all universal covers of
$\bX$, whence they are all isomorphic. Since for each vertex $v$ of
$\bX$, the graph $\tG_v=G(\widetilde{\bX}_v)$ is weakly modular with respect
to the basepoint $v,$ we conclude that the 1-skeleton $G(\widetilde{\bf
  X})$ of $\widetilde{\bX}$ is weakly modular with respect to each
vertex, thus $G(\widetilde{\bX})$ is a weakly modular graph. Since
$\widetilde{\bX}$ is isomorphic to any $\widetilde{\bX}_v,$ by Lemma
\ref{house-condition-tX}, $\widetilde{\bX}$ satisfies the same local conditions as
$\bX$. Thus $\widetilde{\bX}$ satisfies the wheel, the 3-prism, and
the 3-cube conditions. If, additionally, $\bX$ is simply connected, then the universal cover
$\widetilde{\bX}$ is ${\bX}$ itself. Therefore, ${\bX}$ coincides
with $\widetilde{\bX}_v$ for any choice of the basepoint $v\in V({\bX}).$
Therefore, by what has been proven above, $G({\bX})$ is a bucolic graph.
This establishes the implication (ii)$\Rightarrow$(iii) of Theorem \ref{theorem0}.

\subsection{Proof of (iii)$\Rightarrow$(ii).}  Let $\bX$ be a
prism flag complex such that $G:=G({\bX})$ is a weakly modular graph
not containing induced $W_4$.  Then $G$ does not contain induced
$K_{2,3}$ and $W^-_4$ because $G$ is the 1-skeleton of a flag
triangle-square cell complex $\bX^{(2)}$ and both $K_{2,3}$ and
$W_4^-$ contain squares intersecting on two edges.  From Lemma
\ref{simplyconnected} we conclude that $\bX^{(2)}$ (and therefore
$\bX$) is simply connected.  Thus, it remains to show that $\bX$
satisfies the 3-prism, the 3-cube, and the $\widehat{W}_5$-wheel
conditions. First suppose that the triangle $uvw$ and the square
$uvxy$ define in $\bX$ a house. Then $w$ is at distance 2 to the
adjacent vertices $x$ and $y$. By the triangle condition, there exists
a vertex $w'$ adjacent to $w,x,$ and $y$ and different from $u$ and
$v.$ If $w'$ is adjacent to one or both of the vertices $u,v,$ then we
will get a forbidden $W^-_4$ or $W_4$ induced by $u,v,x,y,w'.$ This
establishes the 3-prism condition.

To prove the 3-cube condition, let $xyuv,$ $uvwz,$ and $uytz$ be three
squares of $\bX$ pairwise intersecting in edges and all three
intersecting in $u$. If $x$ and $w$ are adjacent, then the vertices
$v,x,w,u,y,z$ induce in $\bX$ a double house, which is impossible by
Lemma \ref{no-double-house} because $\bX$ satisfies the 3-prism
condition. Hence $x\nsim w$ and analogously $x\nsim t$ and $t\nsim w$.
If $x$ is adjacent to $z,$ then $x,y,u,t,z$ induce in $G$ a forbidden
$K_{2,3}$. Thus $x\nsim z$ and analogously $y\nsim w$ and $v\nsim
t$. First suppose that $d(x,z)=2$ in $G$. Since $d(y,z)=2,$ by the
triangle condition there exists a vertex $s$ adjacent to $x,y,$ and
$z$. From what has been shown before, $s\ne u,t,$ hence $y,u,z,t,s$
induce $K_{2,3},$ $W^-_4,$ or $W_4$ depending of whether $s$ is
adjacent to none, one or two of the vertices $u,t$. Thus, $d(x,z)=3$
and for the same reasons, $d(y,w) = d(v,t) = 3$.  By  the quadrangle
condition there exists a vertex $s$ adjacent to $x,w,t$ and distinct
from previous vertices. Since $d(x,z) = d(w,y) = d(t,v) = 3$, $s \nsim
z,y,v$.  If $s$ is adjacent to $u$, then $s,u,v,w,z$ induce a
forbidden $K_{2,3}$. This shows that in this case the vertices
$s,t,u,v,w,x,y,z$ define a 3-cube, establishing the 3-cube condition.

Finally, we establish the $\widehat{W}_5$-wheel condition.  Note
that $\bX$ satisfies the 3-cube and the 3-prism
conditions and  does not contain a
${\bX}(W^-_5)$ by Lemma \ref{W__k}.  Pick a 5-wheel defined by a 5-cycle
$(x_1,x_2,x_3,x_4,x_5,x_1)$ and a vertex $c$ adjacent to all vertices
of this cycle, and suppose that $x_0$ is a vertex adjacent to $x_1$
and $x_5$ and not adjacent to remaining vertices of this 5-wheel. If
$d(x_0,x_3)=3,$ then by the quadrangle condition QC$(x_0)$, there exists a
vertex $y$ adjacent to $x_0,x_2,x_4$ and not adjacent to $x_3.$ Then
the vertices $c,y,x_2,x_3,x_4$ induce a $W_4$ if $y$ is adjacent to
$c$, and a $W^-_4$ otherwise. So, suppose that $d(x_0,x_3)=2$. By the
triangle condition TC$(x_0)$, there exists a vertex $z$ adjacent to
$x_0,x_2,x_3$. Suppose that $z\nsim c$. If $z \sim x_1$, then $x_2,
x_1,z,x_3,c$ induce a forbidden $W_4$. If $z \sim x_5$, the vertices
$x_1,x_2,c,x_5,z$ induce a forbidden $W_4$ if $z \sim x_1$ or a
$W_4^-$ otherwise. If $z \nsim x_1,x_5$, the vertices
$z,x_2,c,x_5,x_0,x_1$ induce a forbidden $W_5^-$. Thus, $z \sim c$.
To avoid a forbidden $W^-_4$ or $W_4$ induced by $z,c,x_1,x_0,x_5,$
the vertex $z$ must be adjacent to $x_1$ and $x_5.$ Finally, to avoid
$W_4$ induced by $z,c,x_3,x_4,x_5$, the vertex $z$ must be adjacent to
$x_4$ as well. As a result, we conclude that $z$ is adjacent to $x_0$
and to all vertices of the 5-wheel, establishing the
$\widehat{W}_5$-wheel condition. This concludes the proof of the
implication (iii)$\Rightarrow$(ii).
\medskip

Before proving the implication (ii)$\&$(iii)$\Rightarrow$(i), we will establish the last assertion of Theorem \ref{theorem0}.
Let $\bX$ be a flag prism complex satisfying the wheel, the cube, and the prism conditions.
Then its 2-skeleton  $\bY:=\bX^{(2)}$ is a triangle-square flag complex satisfying the wheel,  the 3-cube, and the 3-prism  conditions.
Let $\widetilde{\bX}$  be the universal cover of $\bX$. Then  the 2-skeleton $\widetilde{\bX}^{(2)}$ of $\widetilde{\bX}$ is a covering
space of  $\bY$. But at the same time  $\widetilde{\bX}^{(2)}$ is simply connected (because the 2-skeleton carries all the
information about the fundamental group), so $\widetilde{\bX}^{(2)}$ is the universal cover of $\bY$. Since $\widetilde{\bX}$ is the
prism complex of $\widetilde{\bX}^{(2)}$ and  $\widetilde{\bX}^{(2)}=\widetilde{\bY}$ satisfies the condition (ii) of Theorem \ref{theorem0},
we conclude that $\widetilde{\bX}$ is a bucolic complex.

\subsection{Proof of (ii)$\&$(iii)$\Rightarrow$(i).} Now, we will show that a flag prism complex $\bX$ satisfying
the conditions (ii) and (iii) of Theorem \ref{theorem0} also satisfies the cube and the prism conditions. We start
with an auxiliary result and some conventions.

\begin{lemma}\label{prism-convex} Any prism $H$ (and in particular, any cube) of $\bX$ is convex, i.e., $G(H)$
induces a convex subgraph of $G(\bX).$
\end{lemma}

\begin{proof} If the 1-skeleton $G(H)$ of a prism $H$ is not convex in $G(\bX),$ then  $G(H)$ is not
locally convex since $G(\bX)$ is weakly modular; cf.\ \cite[Theorem 7]{Ch_triangle}. Thus we can find  two vertices $x,y$ of $H$ at distance 2 in $G(H)$
having a common neighbor outside $H$. Since $x$ and $y$ already have two common (non-adjacent) neighbors in $H,$ we will obtain in
$G(\bX)$ a forbidden $K_{2,3},W^-_4,$ or $W_4$.
\end{proof}

Further we will use recurrently this result without referring to Lemma \ref{prism-convex}, simply saying that the prisms and the cubes of $\bX$ are convex.
Furthermore, when we say that  some vertex set or some subgraph $\pi$ of $G(\bX)$ is a prism or a cube, we mean that $\pi$ is the $0$-skeleton or the
$1$-skeleton of a prism or a cube of $\bX$. In that case, if the meaning is clear from the context, we will denote the resulting prism or cube also by $\pi$.
Finally, the notation $Q_n$ stands for the $n$-cube and $K_n$ for the $n$-clique or $n$-simplex.

\medskip\noindent {\bf Cube condition.} Let $q_1,q_2,q_3$ be
three $k$-cubes of $\bX$ that share a common $(k-2)$-cube $q$ and
pairwise share common $(k-1)$-cubes $q_{ij}$. Note that the vertices of $q_{ij}
\setminus q$ span a $(k-2)$-cube and those of $q_i \setminus q_{ij}$ span a $(k-1)$-cube.
For a vertex $x$ of $q$, let $x_{ij}$ be the unique
neighbor of $x$ in $q_{ij} \setminus q$. Let $x_i$ be the second
common neighbor in $q_i$ of the vertices $x_{ij}$ and $x_{ik}$; $x_i$
is in $q_i \setminus (q_{ij} \cup q_{ik})$. By the 3-cube condition,
there exists a vertex $\mx$ such that $\mx \sim x_1, x_2, x_3$ and
${\mx} \nsim x_{12}, x_{13}, x_{23}$, and the vertices
${\mx},x,x_{12},x_{13},x_{23},x_1,x_2,x_3$ constitute a 3-cube $q_x$ of
$\bX$. Since $x_2 \in I({\mx},x_{12})$, $x_2 \notin q_1$, and
the cube $q_1$ is convex,
${\mx} \notin q_1$. For similar reasons, ${\mx}
\notin q_2, q_3$.  Now, for another vertex $y$ of $q$ we denote by
$y_{12},y_{13},y_{23},y_1,y_2,y_3,\my$ the vertices defined in the
same way as for $x$ and  we denote by $q_y$ the 3-cube spanned by
these vertices and $y$.
From the definition of these vertices we conclude that all $x_i,x_{ij},y_i,y_{ij}$ are distinct and
for all distinct $i,j\in \{ 1,2,3\}$, $x_{ij}\sim y_{ij}$ as well as $x_i\sim
y_i$ hold if and only if $x\sim y$.

\begin{lemma}\label{lem-cube-ijk}
For any $x, y \in q$, for any distinct $i,j,k$, $x \nsim y_i, y_{ij}$,
$x_{ik} \nsim y_i, y_{ij}, y_j$ and $x_i \nsim y_j$.
\end{lemma}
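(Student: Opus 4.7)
The plan is to exploit, uniformly, the convexity in $G(\bX)$ of each $k$-cube $q_i$ noted in the opening paragraph of the present implication, together with the standing assumption $x \ne y$ (since $y$ was introduced one paragraph earlier as \emph{another} vertex of $q$). Parameterize $q_i$ as $q \times \{0,1\}^2$, where the two extra coordinates record the shifts into $q_{ij}$ and into $q_{ik}$; then for every $w \in q$ the vertices $w, w_{ij}, w_{ik}, w_i$ of $q_i$ receive the coordinates $(w,0,0)$, $(w,1,0)$, $(w,0,1)$, $(w,1,1)$ respectively, and Hamming distance inside $q_i$ coincides, by convexity, with distance in $G(\bX)$.

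For the four pairs $(x,y_{ij})$, $(x,y_i)$, $(x_{ik},y_i)$, $(x_{ik},y_{ij})$ both endpoints lie in $q_i$; a coordinate count shows that their cube distances are $d_q(x,y)+1$, $d_q(x,y)+2$, $d_q(x,y)+1$, $d_q(x,y)+2$ respectively, each at least $2$ because $x \ne y$. Convexity of $q_i$ then upgrades these to distances in $G(\bX)$, so these pairs are non-adjacent.

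Only the pairs $(x_i, y_j)$ and $(x_{ik}, y_j)$ need a little more work, because $y_j \in q_j \setminus (q_{ij} \cup q_{jk})$ is not in $q_{ij} = q_i \cap q_j$, so $y_j \notin q_i$, while $y_j \sim y_{ij}$ inside $q_j$. The idea is to convert any hypothetical adjacency into a violation of convexity of $q_i$. If $x_{ik} \sim y_j$, then $d_G(x_{ik}, y_{ij}) \le 2$, but convexity of $q_i$ gives $d_G(x_{ik}, y_{ij}) = d_q(x,y)+2 \ge 3$, a contradiction. If $x_i \sim y_j$, then $d_G(x_i, y_{ij}) \le 2$ and convexity of $q_i$ gives $d_G(x_i, y_{ij}) = d_q(x,y)+1$: this is already a contradiction when $d_q(x,y) \ge 2$, and when $d_q(x,y) = 1$ the vertex $y_j$ would lie on a geodesic from $x_i$ to $y_{ij}$, so convexity of $q_i$ would force $y_j \in q_i$, contradicting $y_j \notin q_i$. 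This last sub-case, where convexity forces an outside vertex into the cube, is the only place requiring any care; the other items are one-line convexity computations.
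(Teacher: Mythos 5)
Your proof is correct and takes essentially the same route as the paper's: both arguments rest entirely on the convexity (hence isometric, induced embedding) of the cubes and their faces in $G(\bX)$, with your explicit Hamming-coordinate count standing in for the paper's phrasing via ``a triangle in $q_i$ or non-convexity of $q_i\setminus q_{ik}$'', and with convexity of $q_i$ through $y_{ij}$ used for $x_{ik}\nsim y_j$ where the paper equivalently invokes convexity of $q_j$ through $x$. Your treatment of the delicate pair $x_i\nsim y_j$ (including the case $d_q(x,y)=1$, where convexity would force $y_j\in q_i$) matches the paper's argument.
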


\begin{proof}
If $x$ (respectively, $x_{ik}$) is adjacent to $y_i$ or $y_{ij}$, then since
$x \sim x_{ij}$ (respectively,  $x_{ik} \sim x_i$), either the cube $q_i$ will contain a
triangle  or the cube $q_i \setminus q_{ik}$ is not
convex, which are impossible.  Since $x_{ik}\sim
x$ and $x \nsim y_j$, the convexity of $q_j$ ensures that $x_{ik}
\nsim y_j$. Finally, the convexity of $q_i$ ensures that $x_i \nsim
y_j$, since $y_j \sim y_{ij}$ and $x_i \nsim y_{ij}$.
\end{proof}

\begin{lemma} \label{m_x}
For any $x,y \in q$, for any distinct $i,j$, ${\mx} \nsim y,y_i,y_{ij}$.
\end{lemma}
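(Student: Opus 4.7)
\medskip
\noindent\textbf{Proof proposal.}
The plan is to exploit the convexity of the cubes of $\bX$ in $G=G(\bX)$, combined with the house condition and the convexity of the $3$-cube $q_x$ in the one critical subcase.

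The key reduction, which I would state first, is the following: since $\mx\notin q_i$, any two neighbors of $\mx$ in $q_i$ must be adjacent in $q_i$, by convexity of $q_i$ in $G$ (otherwise $\mx$ would lie in the interval of two non-adjacent vertices of $q_i$ and hence inside $q_i$). Coordinatising $q_i$ as $q\times\{0,1\}^2$, with the two extra bits parametrising the $q_{ij}$- and $q_{ik}$-directions so that $x_i=(x,1,1)$, one reads off $d_{q_i}(x_i,y)=d_q(x,y)+2$, $d_{q_i}(x_i,y_{ij})=d_q(x,y)+1$, and $d_{q_i}(x_i,y_i)=d_q(x,y)$. For $y\ne x$ the first two are $\ge 2$, and the third is $\ge 2$ when additionally $y\not\sim x$ in $q$. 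Combined with the known neighbor $x_i$ of $\mx$ in $q_i$, the reduction immediately gives $\mx\nsim y$ and $\mx\nsim y_{ij}$ for all $y\ne x$ in $q$, and also $\mx\nsim y_i$ whenever $y\not\sim x$ in $q$.

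The main obstacle is the remaining subcase: $\mx\nsim y_i$ when $y\sim x$ in $q$. Here $x_i\sim y_i$ in $q_i$, so convexity of $q_i$ no longer forbids a second neighbor $y_i$ of $\mx$. Assume, towards contradiction, that $\mx\sim y_i$. Then $\{\mx,x_i,y_i\}$ is a $3$-cycle of $G$ and hence a triangle-cell of $\bX$ by flagness. Together with the square face $x_ix_{ij}y_{ij}y_i$ of $q_i$, which shares the edge $x_iy_i$, it forms a house in $\bX^{(2)}$. By the house condition this house is contained in a triangular prism, producing a vertex $z$ of $\bX$ such that $z\sim\mx,x_{ij},y_{ij}$, $z\nsim x_i,y_i$, and $zx_{ij}y_{ij}$ is the second triangle of the prism.

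To finish, I would confront $z$ with the $3$-cube $q_x$. Both $\mx$ and $x_{ij}$ are vertices of $q_x$, at $q_x$-distance $2$; convexity of $q_x$ in $G$ then gives $d_G(\mx,x_{ij})=2$ and therefore $z\in I(\mx,x_{ij})\subseteq q_x$. A direct inspection of $q_x$ shows that the common neighbors of $\mx$ and $x_{ij}$ inside $q_x$ are exactly $x_i$ and $x_j$, so $z\in\{x_i,x_j\}$. However $z\sim y_{ij}$, while the reduction applied inside $q_i$ yields $x_i\nsim y_{ij}$ and the analogous reduction inside $q_j$ yields $x_j\nsim y_{ij}$. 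This contradiction completes the proof.
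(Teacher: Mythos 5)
Your proposal is correct, and its first half (ruling out $\mx\sim y$, $\mx\sim y_{ij}$, and $\mx\sim y_i$ when $x\nsim y$, always with $x\neq y$ — the tacit assumption also in the paper's statement) is the same convexity-of-$q_i$ argument the paper uses, with the needed non-adjacencies read off from cube coordinates instead of quoted from Lemma~\ref{lem-cube-ijk}. Where you genuinely diverge is the remaining case $x\sim y$, $\mx\sim y_i$. The paper assembles $x_j,x_{ij},y_{ij},y_i,x_i,\mx$ into a double-house and invokes Lemma~\ref{no-double-house}, which forces the extra edge $x_jy_{ij}$ and contradicts Lemma~\ref{lem-cube-ijk}; that auxiliary lemma is itself proved from the house condition together with $K_{2,3}$/$W_4^-$-freeness. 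You instead apply the house condition directly to the house formed by the triangle $\mx x_iy_i$ and the square face $x_ix_{ij}y_{ij}y_i$ of $q_i$ (the very house hidden inside the paper's double-house argument), obtain the same auxiliary vertex $z\sim\mx,x_{ij},y_{ij}$, and then finish by a different mechanism: since $q_x$ is a convex $3$-cube and $d(\mx,x_{ij})=2$, the vertex $z$ lies in $I(\mx,x_{ij})\subseteq q_x$, hence $z\in\{x_i,x_j\}$, contradicting $x_i,x_j\nsim y_{ij}$ (cube distances in $q_i$ and $q_j$). So the paper ends with a forbidden-subgraph argument exploiting the second square $\mx x_ix_{ij}x_j$, while you end with a convexity argument exploiting the cube $q_x$; yours bypasses the $K_{2,3}$/$W_4^-$ case analysis (using only convexity of cells plus the house condition), at the price of redoing work that Lemma~\ref{no-double-house} already packages for later reuse. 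One cosmetic point: the facts $x_i\nsim y_{ij}$ and $x_j\nsim y_{ij}$ follow from your coordinate/distance computations in the convex (hence induced) cubes $q_i$ and $q_j$, not from the ``two neighbors of $\mx$ must be adjacent'' reduction per se, so attribute them accordingly.
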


\begin{proof}
First suppose by way of contradiction that ${\mx}$ is adjacent to $y$ or
$y_{ij}$. Since ${\mx} \notin q_i$, ${\mx} \sim x_i$, and since $x_i
\nsim y, y_{ij}$ by Lemma~\ref{lem-cube-ijk}, we get a contradiction
with the convexity of $q_i$.
Suppose now by way of contradiction that ${\mx}\sim y_i$. If $x \nsim
y$, then $x_i \nsim y_i$ and since both $x_i,y_i \in q_i$ are adjacent
to ${\mx}\notin q_i$, we obtain a contradiction with the convexity of $q_i$.
Now, suppose that $x\sim y$. Then $x_i \sim y_i,
x_{ij} \sim y_{ij}$ and the vertices $x_j,x_{ij},y_{ij},y_i,x_i,{\mx}$
define a double-house; by Lemma~\ref{no-double-house}, it implies that
$x_j \sim y_{ij}$, contradicting Lemma~\ref{lem-cube-ijk}. Thus, ${\mx}
\nsim y_i$.
\end{proof}

\begin{lemma} \label{k-2-cube}
The set $\{ {\mx}: x\in q\}$ spans a $(k-2)$-cube $q'$ of $\bX$ and the
vertices of $q_1 \cup q_2 \cup q_3 \cup q'$ span a $(k+1)$-cube of
$\bX$.
\end{lemma}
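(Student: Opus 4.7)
The plan is to first prove $\mx \sim \my$ whenever $x \sim y$ in $q$, and then to show that $x \mapsto \mx$ is a graph isomorphism from $q$ onto $q' := \{\mx : x \in q\}$, so that flagness of $\bX$ promotes $q'$ to a $(k-2)$-cube and the union $q_1 \cup q_2 \cup q_3 \cup q'$ spans the desired $(k+1)$-cube.

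For the key adjacency step, given $x \sim y$ in $q$, I apply the cube condition to the three squares through $x_{12}$, namely $\{x_{12}, x_1, \mx, x_2\} \subset q_x$, $\{x_{12}, x_1, y_1, y_{12}\} \subset q_1$, and $\{x_{12}, x_2, y_2, y_{12}\} \subset q_2$, which pairwise share the distinct edges $x_{12}x_1$, $x_{12}x_2$, $x_{12}y_{12}$. This yields a $3$-cube with a new vertex $z_{12}$ adjacent to $\mx, y_1, y_2$ and, by convexity of cubes, nonadjacent to its other six vertices. Analogous constructions around $x_{13}$ and $x_{23}$ produce $z_{13}$ and $z_{23}$. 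I then argue $z_{12} = z_{13} = z_{23}$: otherwise $z_{12}, z_{23}$ would both be common neighbors of $\mx$ and $y_2$ alongside $x_2$ (with $x_2 \nsim z_{12}, z_{23}$ by cube convexity), making $\{\mx, y_2, x_2, z_{12}, z_{23}\}$ induce a $K_{2,3}$ if $z_{12} \nsim z_{23}$ or a $W_4^-$ if $z_{12} \sim z_{23}$, both forbidden by condition (iii) of Theorem~\ref{theorem0}. Writing $z$ for this common vertex, $z \sim \mx, y_1, y_2, y_3$ and $z \nsim y_{12}, y_{13}, y_{23}$.

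To identify $z = \my$ (and hence conclude $\mx \sim \my$), I still have to rule out $z \sim y$: such an edge would make $\{z, y, y_{12}, y_1\}$ an induced square, which together with the triangle $\{y_1, y_{12}, x_1\}$ forms a house. By the house condition this house extends to a $3$-prism, whose sixth vertex must lie in $q_1$ by convexity and hence must equal $x$ (the only common neighbor of $x_1$ and $y$ in $q_1$). But then $\{x, x_1, \mx, z\}$ would be a $4$-cycle, contradicting the fact that $x$ and $\mx$ are antipodal in the convex $3$-cube $q_x$. Hence $z$ satisfies all the defining conditions of $\my$, and taking $\my := z$ (any other choice being identified with $z$ via the standard $K_{2,3}$-argument on the pairwise non-adjacent $y_1, y_2, y_3$) gives $\mx \sim \my$.

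To finish, non-adjacency $\mx \nsim \my$ and $\mx \ne \my$ for non-adjacent $x, y \in q$ follow from a forbidden-subgraph argument using any common neighbor $w$ of $x, y$ in $q$: the analogous vertex for $w$ is adjacent to both $\mx$ and $\my$ by the above, and an edge $\mx \my$ would combine with this vertex and the $w_i$'s to produce a $K_{2,3}$ or $W_4^-$, with the needed non-adjacencies provided by Lemma~\ref{m_x}. Consequently, $x \mapsto \mx$ is a graph isomorphism onto $q'$; by flagness of $\bX$, $q'$ spans a $(k-2)$-cube. A final check shows that all adjacencies between $q_1 \cup q_2 \cup q_3 \cup q'$ realise exactly the $1$-skeleton of the Hamming product $q \times \{0,1\}^3$ — a $(k+1)$-cube — with no spurious edges by Lemma~\ref{m_x} and convexity of the constituent cubes, so flagness gives the $(k+1)$-cube cell. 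The main obstacle is the identification $z_{12} = z_{13} = z_{23}$ together with the verification $z \nsim y$; both rely on delicate forbidden-subgraph analysis combined with the house condition.
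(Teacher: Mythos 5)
Your overall skeleton is the paper's: apply the cube condition to the three squares $(x_1,\mx,x_2,x_{12})$, $(x_1,y_1,y_{12},x_{12})$, $(x_2,y_2,y_{12},x_{12})$, prove $\mx\sim\my$ iff $x\sim y$, and finish by flagness. But the execution has genuine gaps. In the forward direction, after producing the new vertex $z$ adjacent to $\mx,y_1,y_2$, your route to identifying $z$ with the already-fixed vertex $\my$ breaks down. The ``house'' you build to rule out $z\sim y$ does not exist: $\{y_1,y_{12},x_1\}$ is not a triangle, since $x_1$ and $y_{12}$ both lie in the convex $k$-cube $q_1$ at distance $2$ there (e.g.\ via $x_{12}$), so $x_1\nsim y_{12}$. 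Moreover your uniqueness step (``the standard $K_{2,3}$-argument on the pairwise non-adjacent $y_1,y_2,y_3$'') only excludes the case $z\nsim\my$; if $z\sim\my$ the five vertices $z,\my,y_1,y_2,y_3$ induce $K_{2,3}$ plus an edge between the two degree-$3$ vertices, which is not among the forbidden subgraphs, so nothing is contradicted. The paper avoids all of this: since $(y_1,y_{12},y_2,\my)$ is an induced square of $q_y$ and $G$ contains no induced $K_{2,3}$, $W_4^-$, $W_4$, any common neighbor of $y_1,y_2$ must be $y_{12}$ or $\my$, and $\mx\nsim y_{12}$ (Lemma~\ref{m_x}) forces $z=\my$ at once; in particular your whole detour through $z_{13},z_{23}$ and the exclusion of $z\sim y$ is unnecessary.

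The converse direction is also too thin. You only treat nonadjacent $x,y$ having a common neighbor $w$ in $q$, i.e.\ $d(x,y)=2$; for $d(x,y)\ge 3$ one must first argue, as the paper does, that an edge $\mx\my$ would force $d(x_i,y_i)\le 3$ against convexity of $q_i$, reducing to distance $2$. And at distance $2$ the announced ``$K_{2,3}$ or $W_4^-$'' contradiction does not appear: with the nonadjacencies supplied by Lemma~\ref{m_x}, the natural five-vertex sets (e.g.\ $\mx,\my,\mw,w_1,x_1$) induce a house, not a forbidden graph. What actually works, and what the paper uses, is the six-vertex configuration $x_1,w_1,y_1,\my,\mw,\mx$, which forms a double-house; Lemma~\ref{no-double-house} (the house condition) then forces $x_1\sim y_1$, contradicting $x\nsim y$. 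So both halves of your adjacency criterion need repair before the isomorphism onto $q'$ and the final $(k+1)$-cube conclusion can be drawn.
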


\begin{proof}
First note that since $y_1 \sim \my$ and $y_1 \nsim {\mx}$ by
Lemma~\ref{m_x}, we have that ${\mx} \neq \my$. To prove the first
assertion of the lemma, since $q$ is a $(k-2)$-cube of $\bX$, it
suffices to show that ${\mx}\sim \my$ if and only if $x\sim y$.

First suppose that $x$ is adjacent to $y$. Consider the three 2-cubes
induced by the 4-cycles $(x_1,{\mx},x_2,x_{12},x_1),$
$(x_1,y_1,y_{12},x_{12},x_1),$ and $(x_2,y_2,y_{12},x_{12},x_2)$ of
$G(\bX).$ By the 3-cube condition, they are included in a 3-cube of
$\bX$, i.e., there exists a vertex $s$ adjacent to ${\mx},y_1,$ and
$y_2$. Since  $(y_1,y_{12},y_2,\my,y_1)$ is an induced
$4$-cycle in the 1-skeleton of the 3-cube $q_y$, it is also an induced $4$-cycle of
$G(\bX)$. Since $G(\bX)$ does not contain induced
$K_{2,3}$, $W_4^-$ or $W_4$, we conclude that $s = \my$ or $s =
y_{12}$. Since ${\mx} \sim s$ and ${\mx} \nsim y_{12}$ from
Lemma~\ref{m_x}, $s=\my$ and ${\mx}\sim \my$.  Conversely, suppose that
${\mx}\sim \my$ and assume that $x \nsim y$. Then $x_i\nsim y_i$ and
$x_{ij}\nsim y_{ij}$. Since $x_i,y_i\in q_i$ and since $q_i$
is convex, we conclude that $d(x_i,y_i)=2$, (otherwise,
$(x_i,{\mx},\my,y_i)$ would be a shortest path from $x_i$ to
$y_i$). Since $q_i$ is a cube, it implies that $d(x,y)=2$. Let $z$ be
a common neighbor of $x$ and $y$ in the cube $q$ and let $q_z$ be the
3-cube spanned  by the vertices
$z,z_{12},z_{13},z_{23},z_1,z_2,z_3,\mz$. Since $z \sim x,y$, $z_1
\sim x_1, y_1$ and $\mz \sim {\mx}, \my$. Consequently, the vertices
$x_1,z_1,y_1,\my,\mz,{\mx}$ define a double-house, and from
Lemma~\ref{no-double-house}, it implies that $x_1 \sim y_1$, a
contradiction. Therefore, ${\mx}\sim \my$ if and only if $x\sim y$,
whence $q'$ is a $(k-2)$-cube.

From Lemmas~\ref{lem-cube-ijk} and \ref{m_x}, and since $q'$ is a
$(k-2)$-cube, the vertices of $q_1 \cup q_2 \cup q_3 \cup q'$ span a
$(k+1)$-cube of $\bX$.
\end{proof}

\noindent
{\bf Prism condition:} Let $q$ be a $k$-cube intersecting a
simplex $\sigma$ in an edge $xy$. We will prove
that $q\cup \sigma$ is included in a prism of $\bX$. Let $uv$ be
the unique edge of the $k$-cube $q$ that is the farthest from $xy$
such that $d(x,u)=d(y,v)=k-1$ and $d(x,v)=d(y,u)=k.$ Let
$\sigma=\{ x,y, w_1,\ldots, w_m\}.$ Since $q$ is convex,
$d(w_i,u)=d(w_i,v)=k$ for any $i=1,\ldots,m$.

\begin{lemma}\label{k=1} If  $q$ is a 2-cube (i.e., $k=1$), then $q$ and
$\sigma$ satisfy the prism condition.
\end{lemma}

\begin{proof} By the 3-prism condition, the square
$q=xyvu$ together with each triangle $xyw_i$ of $\sigma$ is included in a 3-prism
$H_i$. Let
$a_i$ be the common neighbor in $H_i$ of $u,v,$ and $w_i$. Then $a_i\ne a_j$ for $i\ne j$, otherwise
$w_j\in I(a_i,y),$ contrary to the convexity of $H_i.$ On the other hand, if $a_i\nsim a_j,$ by the
triangle condition there exists a vertex $b$ adjacent to $a_i,a_j,w_j$. To avoid a forbidden
$W_4^-$ or $W_4$ induced by $a_i,u,v,a_j,b$ necessarily $b\sim u,v$. But then $v,b,a_j,y,w_j$
induce a forbidden $W_4^-$ or $W_4$ because $a_j\nsim y$. This shows that $a_i\sim a_j$,
i.e., the vertices of $\sigma$ together with
$u,v, a_1,\ldots a_m$ span a prism $K_2\Box K_{m+2}$.
\end{proof}

Now let $k\ge 2$  and proceed by induction on $k$. Denote by
$q'=I(x,u)$ and $q''=I(y,v)$ the two disjoint $(k-1)$-cubes obtained
from $q$ by removing all edges parallel to $xy$ (and $uv$). Let $x'$ be an arbitrary neighbor of
$x$ in $q'$ and let $y'$ be the neighbor of $y$ in $q''$ such that $xyy'x'$ is a square of $q$. Finally, let $u'$  be the neighbor of $u$ in $q'$ and
$v'$ be the neighbor of $v$ in $q''$ such that $uu'$ and $vv'$ are parallel in $q$ to $xx'$ and $yy'$. Then $u'\sim v'$ and $uvv'u'$ is a square of $q$.
Consider the decomposition of the $k$-cube $q$ with respect to the squares $xyy'x'$ and $uvv'u'$ into four $(k-2)$-cubes $q_x=I(x,u'), q_y=I(y,v'), q_{y'}=I(y',v),$ and
$q_{x'}=I(x',u).$   Note that $q_{x}\cup q_{y}$ and $q_{x'}\cup q_{y'}$ are two  $(k-1)$-cubes constituting $q$.

By the triangle condition, for each vertex $w_i$ of $\sigma$ there exists a vertex $w'_i$ adjacent to $w_i,x',y'$. Since $w'_i\in I(w_i,x')\subset I(w_i,u)$
and $w'_i\in I(w_i,y')\subset I(w_i,v),$ we conclude that $d(w'_i,u)=d(w'_i,v)=k-1.$ By the triangle condition, there exists a vertex $a_i$ adjacent to $u$
and $v$ at distance $k-2$ from $w'_i$ (and at distance $k-1$ from $w_i$).

\begin{lemma} \label{q_i} The interval $I(w_i,a_i)$ spans a $(k-1)$-cube $q_i$ such that $q_i\cup q'\cup q''$ is a prism.
\end{lemma}

\begin{proof} Since $a_i,u'\in I(u,w_i),$ by the quadrangle condition there exists a vertex $b_i\sim u',a_i$ at distance $k-2$ from $w_i$.
By the 3-prism condition, the square $uu'b_ia_i$ and the triangle $ua_iv$  are included into a 3-prism $H$. Since $H$ is convex and
$v'\in I(u',v),$ necessarily $v'$ belongs to $H$, whence $b_i\sim v'$. Applying the induction hypothesis to $\sigma$ and the $(k-1)$-cube $q_{x}\cup q_{y}$,
we conclude that $I(w_i,b_i)$ spans a $(k-2)$-cube $q'_i$ such that $q'_i\cup q_{x}\cup q_{y}$ is a prism $Q_{k-2}\times K_3$. Analogously, applying the induction hypothesis to the 3-simplex $w'_ix'y'$ and the $(k-1)$-cube $q_{x'}\cup q_{y'}$,
we conclude that $I(w'_i,a_i)$ spans a $(k-2)$-cube $q''_i$ such that $q''_i\cup q_{x'}\cup q_{y'}$ is a prism  $Q_{k-2}\times K_3$.

We show now that $q'_i\cup q''_i$ is a $(k-1)$-cube. If $q'_i\cap q''_i\ne\emptyset,$ then a simple distance comparison shows that $b_i\in I(w'_i,a_i).$ Since $a_i\in I(w'_i,u)$ and
$u'\in I(b_i,u),$ we obtain that $u'\in I(w'_i,u),$ contrary to the assumption that  $q''_i\cup q_{x'}$ is a $(k-1)$-cube. Thus $q'_i$ and $q''_i$ are disjoint.  Now, pick any
vertex $z$ of $q'_i.$ Let $x_z$ be the unique neighbor of $z$ in $q_{x}$,  $x'_z$ be the unique neighbor of $x_z$ in $q_{x'},$ and  $w_z$ be the unique
neighbor of $x'_z$ in $q''_i$. We will prove by induction on $r=d(z,b_i)$ that $z$ and $w_z$ are adjacent. If $r=0,$ then we are done because $z=b_i,x_z=u',x'_z=u,$ and
$w_z=a_i.$  Now, let $r>0$. Let $s$ be a neighbor of $z$ in the interval $I(z,b_i).$ Since $d(s,b_i)=r-1,$ by the induction assumption the vertex  $s$ together with the vertices $x_s\in q_{x},$ $x'_s\in
q_{x'},$ and $w_s\in q''_i$ span a square. Applying the 3-cube condition to this square and the squares $zx_zx_ss,x_zx'_zx'_sx_s,$ we conclude that the vertices
$z,x_z,x'_z,s,x_s,x'_s,w_s$ are included in a 3-cube. Since this cube is convex and $w_z\in I(x'_z,w_s),$ necessarily $w_z$ belongs to this cube, whence $z\sim w_z$. Finally, we show that $w_z$ is the unique neighbor of $z$ in $q''_i$. Suppose by way of contradiction that $z$ is adjacent to yet another vertex $t\in q''_i$. Since the cube
$q''_i$ is convex, $w_z\sim t$. Let $t'$ be the neighbor of $t$ in  $q_{x'}.$ Since $t\sim w_z,$ necessarily $t'\sim x'_z$. By the 3-prism condition, the square $w_ztt'x'_z$ and the triangle $ztw_z$ are included in a convex 3-prism.  Since $x_z\in I(z,x'_z),$ necessarily $x_z$ belongs to this prism, i.e., $x_z\sim t'.$ But then the vertex $x_z$ of $q_{x}$ has two neighbors in the cube $q_{x'},$ contrary to the fact that $q_{x}\cup q_{x'}$ is a $(k-1)$-cube. This establishes that $q'_i\cup q''_i$ is a $(k-1)$-cube and that $q_i\cup q'\cup q''$ is a prism $Q_{k-1}\Box K_3.$
\end{proof}

\begin{lemma} \label{q_iq_j} If $i\ne j,$ then $q_i\cup q_j$ is a $k$-cube and $q'\cup q''\cup q_i\cup q_j$ is a prism $Q_{k-1}\Box K_4$.

\end{lemma}

\begin{proof} First we show that the cubes $q_i$ and $q_j$ are disjoint. If this is not the case and $z\in q_i\cap q_j,$ then $d(z,w_i)=d(z,w_j).$ By the triangle condition, there exists a vertex $z_0\in I(z,w_i)\cap I(z,w_j)\subseteq q_i\cap q_j$ adjacent to $w_i$ and $w_j$. Since $z_0$ belongs to  $q_i$ and $q_j$, $z_0$ has a neighbor
$x_0\in q'$ and a neighbor $y_0\in q''$ such that  $x_0\sim y_0$ and $x_0\sim x, y_0\sim y.$ But then the vertices $x,y,z_0,x_0,y_0,w_i$ define a 3-prism, which is not convex
because $w_j\in I(x,z_0),$ a contradiction. Hence, the $(k-1)$-cubes $q_i$ and $q_j$ are disjoint.

Now, we show that $a_i\sim a_j$. Suppose by way of contradiction that $a_i\nsim a_j$. Consider the vertices $u'\in q'$ and
$v'\in q''$ defined above. Recall that $u'\sim v'$ and $u'\sim u, v'\sim v.$ From Lemma \ref{q_i} we know that $u'$ has a
unique neighbor $b_i$ in $q_i$ and a unique neighbor $b_j$ in $q_j$; moreover, $b_i\sim a_i,v'$ and $b_j\sim a_j,v'$.
By induction assumption applied to the simplex $\sigma$ and to the $(k-1)$-cube $q_x\cup q_y$ spanned by the parallel edges $xy$ and $u'v'$,
we conclude that $b_i\sim b_j$. Now, applying the case $k=1$ (Lemma \ref{k=1}) to the 4-simplex spanned by  $b_i,b_j,u',v'$ and
to the 2-cube spanned by  $u',u,v,v',$ we will obtain a contradiction. Thus $a_i\sim a_j$.

Finally, we  establish that $q_i\cup q_j$ is a $k$-cube. Pick two adjacent vertices $z'\in q'$ and $z''\in q''$, and let  $x'\in q_i$ and $y'\in q_j$ be their
common neighbors. If $z'=u$ and $z''=v$, then $x'=a_i,y'=a_j$ and $x'\sim y'$ because $a_i\sim a_j$. Otherwise, if $z'\ne u,z''\ne v,$ then  $x'\sim y'$
follows from the induction hypothesis applied to $\sigma$ and the cube spanned by the parallel edges $xy$ and $z'z''$. This shows that indeed
$q_i\cup q_j$ is a cube.

Since by Lemma \ref{q_i} $q\cup q_i$ and $q\cup q_j$ are prisms of the form $Q_{k-1}\Box K_3$ and $q_i\cup q_j$ is a $k$-cube, we obtain that $q\cup q_i\cup q_j$
is a prism $Q_{k-1}\Box K_4$.
\end{proof}

From Lemma \ref{q_iq_j} we immediately conclude that the vertex set of the union of $q$ with $\cup_{i=1}^mq_i$ spans a prism $Q_k\Box K_{m+2},$ thus establishing the
prism condition. This also  concludes the proof of the implication (ii)$\&$(iii)$\Rightarrow$(i) of Theorem \ref{theorem0} and finishes the proof of Theorem \ref{theorem0}.

\section{Contractibility and the fixed point property}
\label{pf56}

In this section, we prove contractibility and the fixed point theorem for finite group actions for locally-finite bucolic complexes.
The proofs of both results are based on the fact that in a locally-finite graph the convex hull of any finite set is finite (this property
is no longer true for non-locally-finite bucolic graphs).

\subsection{Convex hulls of finite sets.}

\begin{restatable}{proposition}{propconvexhull}\label{convex_hull} If $G=(V,E)$ is a
  locally-finite bucolic graph, then the convex hull $\mbox{conv}(S)$
  in $G$ of any finite set $S\subset V$ is finite.
\end{restatable}

\begin{proof}
By Theorem~\ref{theorem1},
$G$ is a retract of the (weak) Cartesian product $H=\Box_{i\in I} H_i$
of weakly bridged graphs $H_i$. Each $H_i$ is
locally-finite since it is isomorphic to a gated subgraph of $G$.
Note that $G$ is an isometric subgraph of
$H$.  For each index $i\in I,$ let $S_i$ denote the projection of $S$
in $H_i.$ Since the set $S$ is finite and the distance between any two
vertices of $S$ is finite, for all but a finite set $I'$ of indices
$i$ the set $S_i$ is a single vertex.
Since each set $S_i$ is finite, it is included in a ball of $H_i$, which is
necessarily finite. Since the balls in weakly bridged graphs are
convex, we conclude that for each $S_i$, the convex hull
$\mbox{conv}_{H_i}(S_i)$ of $S_i$ in $H_i$ is finite.  The convex hull
$\mbox{conv}_H(S)$ of $S$ in $H$ is the Cartesian product of the
convex hulls of the sets $\mbox{conv}_{H_i}(S_i)$:
$\mbox{conv}_H(S)=\Box_{i\in I} \mbox{conv}_{H_i}(S_i)$.  All
$\mbox{conv}_{H_i}(S_i)$ for $i\in I\setminus I'$ are singletons, thus
the size of $\mbox{conv}_H(S)$ equals the size of $\Box_{i\in I'}
\mbox{conv}_{H_i}(S_i),$ and thus is finite because $I'$ is finite and
each factor $\mbox{conv}_{H_i}(S_i)$ in this product is finite by what
has been shown above.

Since $A:=V\cap \mbox{conv}_H(S)$ is convex in $G$ and it contains the set $S$,
the convex hull of $S$ in $G$ is necessarily included in $A$.
Thus this convex hull is finite, concluding the proof of the proposition.
\end{proof}

Now, we show that Proposition \ref{convex_hull} is false for non-locally-finite bucolic graphs.
Namely, we present an infinite bridged graph $G$ in which all
maximal cliques have size $3$ (i.e., the systolic complex whose $1$-skeleton is $G$ has dimension $3$)
and the convex hull of five of its vertices is infinite.

\begin{example}
The graph $G$ consists of a graph $H$ of girth $6$ (recall that the
{\it girth} of a graph is the length of its smallest cycle) and a
vertex $c$ not belonging to $H$ and adjacent to all vertices of
$H$. Obviously $G$ is bridged,  has diameter $2$, and clique-number
$3$.  The graph $H$ is defined in the following way: it has a set of
four pairwise nonadjacent vertices $A=\{a_0,a_1,a_2,a_3\}$ and a
one-way infinite path $P = \{b_0, b_1, b_2, \ldots, b_j, \ldots \}$
disjoint from $A$. In $H$, $b_j$ is adjacent to $a_{i}$ if and only if
$j = i \mod 4$. For any distinct vertices $a_i,a_{j}$, $d_H(a_i,a_j)
\geq 3$ and thus any cycle containing $a_i$ and $a_j$ has length at
least $6$. Any shortest cycle containing only one vertex $a_i$, has
the form $(b_j,b_{j+1},b_{j+2},b_{j+3},b_{j+4},a_{j\!\!\mod 4},b_j)$ and has
also length at least $6$. Thus the girth of $H$ is $6$. Now, take the
convex hull in $G$ of the $5$-point set $A\cup\{b_0\}$. For each $j$,
note that $b_j$ is in the interval $I(b_{j-1},a_{j\!\!\mod
  4})$. Consequently, one can easily show by induction on $j$ that
conv$(A)$ is the whole graph $G$.
\end{example}

\subsection{Contractibility.}

\thcontractible*

\begin{proof}
Let $\bX$ be a bucolic complex and let $G=(V,E)$ be its
1-skeleton.  Pick any vertex $v_0$ of $G$ and let $B_k(v_0,G)$ be the ball of radius $k$ centered at $v_0$.
Since $G$ is locally-finite, each ball $B_k(v_0,G)$ is finite.  By Proposition \ref{convex_hull} the convex
hulls conv$(B_k(v_0,G)),
k\ge 1,$ are finite. Hence $V$ is an increasing union of the finite
convex sets $\textrm{conv}(B_k(v_0,G)), k\ge 1$.
A subgraph $G'$ of $G$ induced by a convex set of $G$ satisfies the
condition (ii) of Theorem \ref{theorem1}, thus $G'$ satisfies all other conditions
of this theorem, whence $G'$ is bucolic. Hence each
subgraph $G_k$ induced by $\mbox{conv}(B_k(v_0,G))$ is bucolic.

The prism complex $\bX$ is an increasing union of the finite bucolic
complexes $\bX(G_k)$ of the graphs $G_k, k\ge 1.$ Thus, to show that
$\bX$ is contractible, by Whitehead theorem, it suffices to show that
each complex $\bX(G_k)$ is contractible.  By condition (iv) of Theorem
\ref{theorem1}, the graph $G_k$ can be obtained via Cartesian products
of finite weakly bridged graphs using successive gated amalgams.  The
clique complexes of weakly bridged graphs are exactly the weakly
systolic complexes, therefore they are contractible by the results
\cite{Osajda}. Cartesian products of contractible topological spaces
are contractible, thus the prism complexes resulting from the
Cartesian products of prime graphs are contractible. Now, if a graph
$G'$ is a gated amalgam of two finite bucolic graphs $G_1,G_2$ with
contractible prism complexes $\bX(G_1),\bX(G_2)$ along a gated
subgraph $G_0=G_1\cap G_2$ which also has a contractible prism complex
$\bX(G_0),$ then by the gluing lemma \cite[Lemma 10.3]{Bj}, the prism
complex $\bX(G')$ of the bucolic graph $G'$ is also
contractible. Therefore, for each $k$, the prism complex $\bX(G_k)$ is
contractible. This concludes the proof of the contractibility theorem.
\end{proof}

\subsection{Fixed prism property.}

\thfixedprism*

\begin{proof}
Let $\bX$ be a bucolic complex and let $G$ denote the 1-skeleton
of $\bX$.  Let $F$ be a finite group acting by cell automorphisms on
$\bX$ (i.e., any $f\in F$ is a bijection and maps isometrically prisms onto prisms). Then
for an arbitrary vertex $v$ of $\bX,$ its orbit $Fv=\{ fv:\; \; f\in
F\}$ is finite.  Let $G_v$ be the subgraph of $G$ induced by the
convex hull in $G$ of the orbit $Fv$.  Since $Fv$ is finite, the graph
$G_v$ is finite by Proposition \ref{convex_hull}. Moreover, as a
convex subgraph of $G$, $G_v$ satisfies the conditions of Theorem
\ref{theorem1}(ii), hence $G_v$ is bucolic.  Clearly, the prism
complex $\bX(G_v)$ of $G_v$ is $F$-invariant.  Thus there exists a
minimal by inclusion finite non-empty bucolic subgraph $\ovG$ of $G$ whose
prism complex is $F$--invariant. We assert that $\bX(\ovG)$ is a
single prism, i.e., $\ovG$ is the Cartesian product of complete
graphs. We prove this assertion in two steps: first we show that $\ovG$
is a \emph{box}, (i.e., a Cartesian product of prime graphs), and then
we show that each prime graph must be a complete graph. By minimality
choice of $\ovG$ as an $F$-invariant bucolic subgraph, we conclude that
each proper bucolic subgraph of $\ovG$ is not $F$-invariant.  Therefore,
the first step of our proof is a direct consequence of the following
result.

\begin{proposition}
\label{Pfixedbox}
If $\ovG$ is a finite bucolic graph,  then there exists a box that is invariant under every automorphism of $G$.
\end{proposition}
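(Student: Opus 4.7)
The plan is to induct on $|V(G)|$. The base case is when $G$ is itself a box (which includes the trivial case $|V(G)|=1$): here $B=G$ is $\mathrm{Aut}(G)$-invariant. For the inductive step, assume $G$ is not a box. By Theorem~\ref{theorem1}(iv), $G$ admits a nontrivial gated amalgam decomposition, so the family $\mathcal{M}$ of maximal proper gated subgraphs of $G$ is nonempty, finite, and setwise $\mathrm{Aut}(G)$-invariant. The strategy is to exhibit a canonical $\mathrm{Aut}(G)$-invariant proper gated subgraph $H\subsetneq G$ and apply the induction hypothesis to $H$. This succeeds because gated subgraphs of bucolic graphs are bucolic: as convex subgraphs of a weakly modular graph they remain weakly modular, and they inherit the forbidden induced subgraphs from Theorem~\ref{theorem1}(ii). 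The induction hypothesis thus yields an $\mathrm{Aut}(H)$-invariant box $B\subseteq H$, and since $H$ is $\mathrm{Aut}(G)$-invariant, every $\varphi\in\mathrm{Aut}(G)$ restricts to an element of $\mathrm{Aut}(H)$; hence $B$ is in fact $\mathrm{Aut}(G)$-invariant.

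To construct $H$, consider the $\mathrm{Aut}(G)$-orbits on $\mathcal{M}$. For any orbit $\mathcal{O}\subseteq\mathcal{M}$, the intersection $H_{\mathcal{O}}:=\bigcap_{M\in\mathcal{O}} M$ is $\mathrm{Aut}(G)$-invariant and gated (gated subgraphs being closed under intersection). If some orbit yields a nonempty proper intersection, take $H:=H_{\mathcal{O}}$ and induct. Otherwise, when every orbit's intersection degenerates to $\emptyset$ or $G$, I invoke Chastand's decomposition theory: since $G$ is pre-median by Theorem~\ref{theorem1}(ii), it is fiber-complemented by \cite[Theorem~4.13]{Cha1}, and the machinery of \cite{Cha1,Cha2} organizes the gated amalgam structure of $G$ into a canonical finite tree-like incidence complex $T$ on which $\mathrm{Aut}(G)$ acts by simplicial automorphisms. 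A Serre-type fixed-point lemma for a finite group action on a tree (or a contractible finite simplicial complex) produces an $\mathrm{Aut}(G)$-fixed simplex of $T$, which corresponds to the desired canonical proper $\mathrm{Aut}(G)$-invariant gated subgraph.

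The main obstacle is precisely this edge case: I must ensure that whenever $G$ is not itself a box, some $\mathrm{Aut}(G)$-invariant proper gated subgraph of $G$ can indeed be produced. Making the canonical complex $T$ precise and equivariant requires invoking uniqueness (up to permutation) of the prime factorization for fiber-complemented graphs, together with the forbidden-subgraph characterization of bucolic graphs in Theorem~\ref{theorem1}(iii), in order to rule out pathological actions of $\mathrm{Aut}(G)$. If the explicit tree construction proves too delicate, a fallback is to strengthen the orbit-intersection argument by also considering joins in the gated-subgraph lattice, and exploiting lattice fixed-point theorems for finite group actions to locate an invariant element that is neither $\emptyset$ nor $G$.
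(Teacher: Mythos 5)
Your reduction step is fine and matches the paper's implicit set-up: gated (hence convex) subgraphs of a bucolic graph are bucolic, an $\mathrm{Aut}(G)$-invariant proper gated subgraph $H$ inherits the action by restriction, and one may induct on $|V(G)|$. The genuine gap is exactly the case you flag yourself: when every $\mathrm{Aut}(G)$-orbit of maximal proper gated subgraphs has empty intersection, you have no actual argument. The appeal to a ``canonical finite tree-like incidence complex'' attached to Chastand's theory is not something you construct, nor is it available off the shelf in \cite{Cha1,Cha2}; and the parenthetical weakening of Serre's fixed-point theorem to ``a contractible finite simplicial complex'' is false in general (finite groups can act without fixed points on finite contractible complexes -- Serre's theorem is special to trees/CAT(0) spaces), so even granting such a complex the fixed simplex is not automatic. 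The fallback via ``lattice fixed-point theorems'' is likewise unsubstantiated: the sublattice of gated subgraphs fixed setwise by the group always contains $\emptyset$ and $G$, and nothing you say forces an invariant element strictly in between. So the heart of the proposition -- showing that a non-box bucolic graph with no proper invariant bucolic subgraph cannot exist -- is left unproved.

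For comparison, the paper closes this case with a concrete mechanism you are missing. Writing $G$ as gated amalgams along gated separators $H_i$, it considers (following \cite{br-03}) the \emph{peripheral} subgraphs $U_i=G_i'\setminus H_i$ (those containing no gated separator); a non-box $G$ has at least one. Since automorphisms permute peripheral subgraphs, $H=\bigcap_{i}G_i''$, the complement of $\bigcup_i U_i$, is an invariant gated subgraph. If $H\neq\emptyset$ it is a proper invariant bucolic subgraph (your inductive case); if $H=\emptyset$, the Helly property for gated sets \cite{DrSch} yields two disjoint $G_i'',G_j''$, and peripherality of $U_i,U_j$ forces $H_j\subseteq G_i''$ and $H_i\subseteq G_j''$, hence $H_i\cup H_j\subseteq G_i''\cap G_j''$, a contradiction. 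Some such argument (peripherality plus Helly, or an equivalent) is what your proposal needs in place of the unconstructed tree and the invalid fixed-point appeals.
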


\begin{proof} If $\ovG$ is a box, then the assertion
is trivially true. Suppose now that $\ovG$ is not a box and assume without loss of generality that each proper bucolic
subgraph of $\ovG$ is not $Aut(\ovG)$-invariant.  By Theorem~\ref{theorem1}(iv), $\ovG$ is a gated amalgam of two proper nonempty
gated subgraphs $G'$ and $G''$ along a common gated subgraph $H_0$. Then we say that $H_0$ is a {\it gated separator} of $\ovG$.
Following \cite{br-03}, we will call $U':=G'\setminus H_0$ a \emph{peripheral subgraph} of $\ovG$ if $U'$ does not contain any gated separator of $\ovG$.

Since $\ovG$ is not a box,  it contains at least one gated
separator, and therefore $\ovG$ contains at least one peripheral subgraph (indeed, among all gated separators of $\ovG$ it suffices to consider a gated
separator $H_0$ so that $\ovG$ is the gated amalgam of $G'$ and $G''$ along $H_0$ and $G'$ has minimum size; then $G'\setminus H_0$ is a peripheral subgraph).
Let ${\mathcal U}=\{U_i: i \in I\}$ be the family of all peripheral subgraphs of $\ovG$,
such that $G$ is the gated amalgam of $G_i'$ and $G_i''$ along the gated separator $H_i$, where
$U_i=G_i'-H_i$ and $G_i'' \neq H_i.$ Note that any automorphism $f$ of $\ovG$ maps peripheral subgraphs to peripheral subgraphs,
thus the subgraph $\bigcup_{i\in I}U_i$ and the subgraph $H=\bigcap_{i \in I}G_i''$ induced by the complement of this union are both
$AutovG)$-invariant subgraphs of $\ovG.$ As an intersection of gated subgraphs of $\ovG,$ the graph $H$ is either empty or a proper gated subgraph of $\ovG$. In the second case,
since gated subgraphs of $\ovG$ are bucolic, we conclude that $H$ is a proper bucolic $Aut(\ovG)$-invariant subgraph of $\ovG$, contrary to minimality of $\ovG$. So, $H$ is empty.
By the Helly property for gated sets of a metric space \cite{DrSch}, we can find two indices $i,j\in I$ such that the gated subgraphs $G''_i$ and $G''_j$ are disjoint.
Since $H_i \cap H_j \subseteq G_i'' \cap G_j''$, the gated separators $H_i$ and $H_j$ are disjoint. But in this case, since $U_i=G'_i\setminus H_i$ is peripheral, we conclude that $H_j$ is contained in $G''_i$ (analogously, $H_i$ is contained in $G''_j$). Thus $H_i \cup H_j \subseteq G_i'' \cap G_j''$, contrary to the choice of $G''_i$ and $G''_j$. This finishes the proof of the proposition.
\end{proof}

Thus $\ovG$ is a box, and to finish the proof of Theorem \ref{fixed_prism} it is enough to show the following.

\begin{proposition}
\label{Pfixedprism} The graph $\ovG$ is the Cartesian product of complete graphs, i.e., $\bX(\ovG)$ is a prism.
\end{proposition}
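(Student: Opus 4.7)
My plan is to combine the box reduction from Proposition \ref{Pfixedbox} with the fixed-point property of weakly systolic complexes under finite group actions, as established in \cite{ChOs}. By Proposition \ref{Pfixedbox} I may assume $G = G_1 \Box \cdots \Box G_n$ is a box, with each $G_i$ a prime bucolic graph. Theorem \ref{theorem1}(iii) guarantees that every $G_i$ is either $K_2$ or a $2$-connected weakly bridged graph, so the clique complex $\bX(G_i)$ is in every case (weakly) systolic.

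The target is to produce an $F$-invariant Hamming subgraph of $G$ of the form $\sigma = \sigma_1 \Box \cdots \Box \sigma_n$, with each $\sigma_i$ a clique of $G_i$. Any such $\sigma$ is a prism, hence a bucolic subgraph of $G$, so by the minimality of $G$ among nonempty $F$-invariant bucolic subgraphs I will conclude $\sigma = G$; projecting the equality onto each factor will force $\sigma_i = G_i$ for every $i$, so each prime factor $G_i$ will be complete and $\bX(G)$ will itself be a single prism, as required. To construct $\sigma$ I will first invoke the classical unique prime factorization theorem for finite connected graphs (Sabidussi--Imrich): every automorphism of $G$ permutes the Cartesian factors $G_1, \ldots, G_n$ and only interchanges isomorphic ones. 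The action of $F$ thus descends to a permutation action on the index set $\{1,\ldots,n\}$, and for any orbit $O$ with representative $i_O \in O$ the stabilizer $\mathrm{Stab}_F(i_O)$ acts by graph automorphisms on $G_{i_O}$ through the corresponding coordinate projection.

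Next, I will apply the fixed-point theorem for finite group actions on weakly systolic complexes from \cite{ChOs}: for each orbit $O$ this yields a $\mathrm{Stab}_F(i_O)$-invariant clique $\sigma_{i_O}$ in $G_{i_O}$ (the case $G_{i_O} = K_2$ being trivial). For every $i \in O$ I will then pick some $f_i \in F$ with $f_i(i_O) = i$ and set $\sigma_i := f_i(\sigma_{i_O})$; any two such choices of $f_i$ differ by an element of $\mathrm{Stab}_F(i_O)$ which preserves $\sigma_{i_O}$ setwise, so $\sigma_i$ is well defined. The $F$-invariance of $\sigma$ is then formal: for $h \in F$, the element $hf_i$ takes $i_O$ to $h(i)$, so by the same well-definedness $h(\sigma_i) = (hf_i)(\sigma_{i_O}) = \sigma_{h(i)}$, whence $h(\sigma) = \sigma$. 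The only substantive obstacle in this route is the fixed-point theorem for weakly systolic complexes, which is precisely what converts primality (``not decomposable as a Cartesian product or as a gated amalgam'') into completeness; everything else is bookkeeping for the permutation action of $\mathrm{Aut}(G)$ on the prime factors of $G$.
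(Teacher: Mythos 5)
Your argument is correct, but it follows a different mechanism than the paper's. The paper also reduces to a box $G=G_1\Box\cdots\Box G_k$ of $2$-connected weakly bridged factors, but instead of decomposing the action factor-by-factor it passes to the \emph{strong} product $G'=G_1\boxtimes\cdots\boxtimes G_k$ on the same vertex set: each factor is dismantlable by \cite[Theorem B]{ChOs}, dismantlable graphs are closed under this product operation \cite{NoWi}, so $G'$ is dismantlable, $F$ acts on $G'$ by automorphisms, and Polat's theorem \cite{Polat-inv} produces an $F$-invariant clique of $G'$; the minimal prism of $\bX(G)$ containing that clique is $F$-invariant, and minimality of $G$ finishes exactly as in your last step. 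Your route instead makes explicit what the paper leaves implicit in the sentence ``$F$ acts by automorphisms on $G'$'', namely the Sabidussi--Vizing structure of automorphisms of a Cartesian product of prime factors (permutation of isomorphic factors composed with factor isomorphisms, cf.\ \cite{ImKl}), and then applies the fixed point theorem for finite groups acting on weakly systolic complexes from \cite{ChOs} to one representative factor per orbit, transporting the invariant clique equivariantly and taking the product $\sigma_1\Box\cdots\Box\sigma_n$; your well-definedness and invariance computations for this transport are sound, and the product of cliques is indeed an induced Hamming subgraph whose prism complex is $F$-invariant, so minimality applies. What the paper's approach buys is that all the orbit/stabilizer bookkeeping is absorbed into a single invariant clique of the strong product (at the cost of quoting dismantlability, its closure under strong products, and Polat's theorem); what your approach buys is that it isolates the single nonelementary input (the \cite{ChOs} fixed point theorem, itself proved via dismantlability) and makes transparent that completeness of each prime factor comes from an invariant simplex in the corresponding weakly systolic complex. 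Both are legitimate; just be sure, when writing it up, to state the factor-stabilizer action via the component maps (i.e., $h\mapsto h_{i_O}$ is a homomorphism $\mathrm{Stab}_F(i_O)\to\mathrm{Aut}(G_{i_O})$) rather than via ``coordinate projection'' of the set action, and to note that the case $G_i=K_2$ (or a trivial stabilizer) is handled by taking any clique, the invariance argument being unchanged.
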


\begin{proof} Let $\ovG=G_1\Box\cdots \Box G_k$, where each factor $G_i, i=1,\ldots,k,$ is a $2$-connected finite weakly bridged graph. By \cite[Theorem B]{ChOs} every factor $G_i$ is
dismantlable. Since dismantlable graphs form a variety (cf.\ e.g.\ \cite[Theorem 1]{NoWi}), it follows that the strong product $G'=G_1\boxtimes \cdots  \boxtimes G_k$ is dismantlable.  Observe that the finite group $F$ acts by automorphisms on $G'$. By the definition of the strong product, any clique of $G'$ is included in a prism of $\bX(G).$ By \cite[Theorem A]{Polat-inv}, there exists a clique $\sigma$ in $G'$ invariant under the action of $F$. Since $F$ acts by cellular automorphisms on ${\bf X}(G)$, it follows that $F$ fixes  the minimal prism containing all vertices of $\sigma$ (treated as vertices of $G$, and hence of ${\bf X}(G)$).
  By the minimality choice of $\ovG$ it follows that ${\bf X}(\ovG)$ is itself a prism.
\end{proof}

This concludes the proof of the fixed prism theorem.
\end{proof}

\section{Moorability of weakly bridged graphs}\label{moorability}

In this section, we extend Theorem 5.1 of \cite{ChOs} and  prove that non-locally-finite weakly bridged
graphs without infinite cliques are moorable. This result is established in \cite{ChOs} via a LexBFS ordering of vertices, which heavily
uses local-finiteness of $G$. Simple examples  show that not every non-locally-finite graph admits a LexBFS ordering. On the other hand,
Polat \cite{Po-infbrid} showed that all graphs admit a  BFS (breadth-first-search) ordering and, extending the result
of~\cite{Ch_bridged}, he showed that this BFS order provides a mooring of non-locally-finite bridged graphs.  In order to circumvent the
bottleneck  of LexBFS, we refine Polat's definition of BFS and define  a well-ordering of the vertices of a graph, which is intermediate
between BFS and LexBFS, that we call SimpLexBFS. We show that any (non-locally-finite) graph without infinite
cliques admits  a SimpLexBFS and that for weakly bridged graphs SimpLexBFS provides a mooring. This will complete the proof of the
implication (iii)$\Rightarrow$(i) of Theorem \ref{theorem1}.

\begin{definition}
A well-order $\leq$ on the vertex-set $V(G)$ of a graph $G$ is a {\it SimpLexBFS order} if for
every vertex $x\in V(G)$, there exists a mapping $L_x: \{y: y > x\} \to 2^{\{t: t\leq x\}}$ satisfying the
following conditions (in what follows we set $L_{(x)}(y)=\bigcup_{t < x}L_{t}(y)$, for $y
  \geq x$):
\begin{enumerate}[(S1)]
\item If $x < y < z$, then $L_{x}(z) \subseteq L_{y}(z)$.
\item If $x<y$ and $L_{(x)}(x) \neq L_{(x)}(y)$, then
  $\min_{\leq} \{ L_{(x)}(x) \Delta L_{(x)}(y)\} \in L_{(x)}(x)$.
\item For $x<y$, we have $L_x(y)=L_{(x)}(y)\cup \{x\}$ if $x\sim y$ and $x\sim t$ for all $t\in L_{(x)}(y)$, and $L_x(y)=L_{(x)}(y)$ otherwise.
\end{enumerate}
\end{definition}

Consider a graph $G$ and a SimpLexBFS order $\leq$ on $V(G)$. We now
explain how to build a spanning tree using $\leq$. Let $u_0$ be the
least element of $(V(G),\leq)$ and for every vertex $v \neq u_0$, let
$f(v) = \min_\leq \{u: u \in L_{(v)}(v)\}$; we say that $f(v)$ is
the \emph{father} of $v$ and $f:V(G)\rightarrow V(G)$ is the {\it father map} of $\leq$.
Note that for every $v \neq u_0$, $f(v)=\min_\leq \{u: u \sim v\}$, and thus,
$f(v) \leq v$. Since $\leq$ is a well-order, the set of edges $\{vf(v): v \neq u_0\}$ constitutes
a spanning tree of $G$.

The following lemma provides some basic properties of SimpLexBFS orders and can be easily proved by (transfinite) induction.

\begin{lemma}
 \label{l7.1}
 Let $\leq$ be a SimpLexBFS order on $V(G)$, let $u_0$ be the least
 element of $(V(G),\leq)$ and let $(L_{x})_{x\in V(G)}$ be the
 corresponding family of mappings. Then the following properties hold:
\begin{enumerate}
\item $\leq$ is a BFS order, i.e., if $v \leq w$, then  $d(v,u_0) \leq
  d(w,u_0)$;
\item if $v \neq u_0$, then  $d(f(v),u_0) = d(v,u_0)-1$;
\item if $f(v) \neq f(w)$, then $v < w$ if and only if $f(v) < f(w)$;
\item if $w \sim v$, then $f(v) \leq w$.
\item if $v \leq w$, then $L_{(v)}(w)\cup \{w\}$ is a
  clique of $V(G)$;
\end{enumerate}
\end{lemma}

Properties (2)-(4) also hold for all BFS orderings. On the other hand, (5) is the property which
distinguishes SimpLexBFS from BFS.

\begin{proposition}\label{SimpLexBFS} If a graph $G$ does not contain  infinite
cliques and $u_0$ is an arbitrary vertex of $G,$  then
there exists a SimpLexBFS order $\leq$ on $V(G)$ such that $u_0$ is the
least element of $(V(G), \leq)$.
\end{proposition}

\begin{proof}
We proceed as in the proof of Lemma 3.6 of \cite{Po-infbrid}. Consider an arbitrary well-order $\triangleleft$ on $V(G)$. We
inductively construct a well-order $\leq$ on $V(G)$ and a family of
mappings $(L_x)_{x \in V(G)}$ satisfying the conditions (S1), (S2), (S3). For  every $y \in V(G)$,
let $L_{u_0}(y)=\{u_0\}$ if $u_0\sim y$ and
$L_{u_0}(y)=\emptyset$ if $u_0\nsim y$. Assume that for a set $I \subseteq V(G)$ (including $u_0$), we
have constructed a well-order $\leq$ and a family of mappings $(L_x)_{x \in I}$ such that:
\begin{enumerate}[(P1)]
\item If $x \in I$ and  $y\in (V(G)\setminus I) \cup I_{>x}$, then $L_x(y) \subseteq I_{\le x}$, where $I_{>x}:=\{ t\in I: t>x\}$ and $I_{\le x}:=\{ t\in I: t\le x\}$.
\item If $x,y \in I$ such that $x < y$ and $z \in
  (V(G)\setminus I) \cup I_{>}(y)$, then $L_x(z) \subseteq L_y(z)$.
\item If $x\in I$, $y\in (V(G)\setminus I)\cup I_{>x}$, and $L_{(x)}(x)\neq L_{(x)}(y)$, then $\min_\leq
  \{ L_{(x)}(x) \Delta L_{(x)}(y)\} \in L_{(x)}(x)$.
\item If $x\in I$ and $y\in (V(G)\setminus I)\cup I_{>x}$, then
  $L_x(y)=L_{(x)}(y)\cup \{x\}$ if $x \sim y$ and $x\sim t$ for all $t
  \in L_{(x)}(y)$; and $L_x(y)=L_{(x)}(y)$ otherwise.
\item If $y\notin I$, then $L_I(y)\cup\{y\}$ induces a clique of $G$, where $L_I(y):=\bigcup_{t \in I}L_{t}(y)$.
\end{enumerate}

\medskip
If $I = V(G)$, then $\leq$ is a SimpLexBFS order on $V(G)$ and we are done. Otherwise, if $I \neq V(G)$,
we iteratively define a set $L'$ as follows. Initially, let $L' = \emptyset$ and while there exists
$y \in V(G)\setminus I$ such that $L' \subsetneq L_I(y)$, we add $$\min_\leq \{x \in I\setminus
L': \exists y \in V(G)\setminus I  \mbox{ such that } L' \subsetneq L_I(y)\}$$ to $L'$. Since by (P5), for
each $y \in V(G)\setminus I$, $L_I(y)$ induces a clique of $G$, and since $G$
does not contain infinite cliques, after a finite
number of steps the iteration stops and that there exists $y \in
V(G)\setminus I$ such that $L_I(y) = L'$.

Let $w$ be the least element of $(\{y \in V(G)\setminus I: L_I(y) =
L'\},\triangleleft)$. We extend $\leq$ by setting $x<w$  for any $x \in I$.
We define $L_w$ as follows: for every $y \notin I\cup\{w\}$, we set
$L_w(y):= L_I(y) \cup\{w\}$ if $w \sim y$ and $w \sim t$ for all $t \in L_I(y)$;
otherwise, we set $L_w(y):=L_I(y)$. Let $I':=I \cup \{w\}$. To complete the
proof of the proposition, it remains to
show that $I'$ satisfies the induction properties (P1)-(P5).

For (P1), if $x<w$, then the property holds by the induction
hypothesis. If $x=w$, then for every $y \notin I\cup\{w\}$ we have $L_w(y)
\subseteq L_I(y)\cup\{w\} \subseteq I'$.

For (P2), if $x<y< w$, then the property holds by the induction
hypothesis. If $x<y = w$, then for every $z \notin I'$
we have $L_x(z)\subseteq L_I(z) \subseteq L_w(z)$.

For (P3), if $x < w$, then the property holds by the induction
hypothesis. If $x=w$, then for every $y \notin I'$, we have $L_{(w)}(y)
= L_I(y)$. By the definition of $L' = L_{I}(w) = L_{(w)}(w)$, either $L_{(w)}(y) = L'$
or $\min_\leq \{ L' \Delta L_{(w)}(y)\} \in L'$.

For (P4), if $x <w$, then the property holds by the induction
hypothesis. If $x=w$, the property holds by the definition of $L_w$.

For (P5), if $x <w$, then the property holds by the induction
hypothesis. If $x=w$, then, by induction hypothesis, $L_I(y)\cup\{y\}$ is a
clique of $G$ for every $y$. If $w \notin L_w(y)$, then $L_w(y)
= L_I(y)$ and we are done. If $w \in L_w(y)$, from the definition of
$L_w(y)$ it follows that for every $t \in L_I(y)\cup\{y\}$ we have $t \sim w$;
consequently, $L_w(y) \cup \{y\}= L_I(y) \cup \{y\} \cup \{w\}$ is
a clique of $G$, and we are done.
\end{proof}

We can now prove the main result of this section.

\begin{proposition}\label{prop-moorable} Any weakly bridged graph $G$ without infinite cliques is moorable.
\end{proposition}

\begin{proof} We proceed as in the proof of Theorem 5.1 of \cite{ChOs}. Let $u_0$ be any vertex of $G$. By Proposition \ref{SimpLexBFS},
$V(G)$ admits a SimpLexBFS order $\leq$, where $u_0$ is the least element of $(V(G),\leq).$  Let
$(L_{x})_{x\in V(G)}$ be the corresponding family of mappings.  For
every vertex $v \neq u_0$, let $f(v) = \min_\leq \{u: u \in L_{(v)}(v)\}$ be the father map of $\leq$.

The following property of weakly bridged graphs immediately follows from the convexity of balls.

\begin{lemma}\label{rem-quad-wbrid}
If $u,v,v',w$ are four  vertices of a  weakly bridged graph $G$ such that  $u \sim
v,v'$ and $v,v' \in I(u,w)$,  then $v \sim v'$.
\end{lemma}

We now prove that $G$ satisfies the fellow-traveler property and
that  $f$ is a mooring of $G$.

\begin{lemma}\label{lem-fpp}
If  $v\sim w$, then either $f(v) = f(w)$ or $f(v)\sim f(w)$; additionally, if $v \leq w$,
then either $f(w)=v$ or $f(w) \sim v$. In particular, the father map $f$ is a mooring of $G$ onto $u_0$.
\end{lemma}

\begin{proof}
Let $w' = f(w)$ and $v' = f(v)$.
To prove the first assertion of the lemma, we proceed by induction on $i+1 = \max\{d(u_0,v), d(u_0,w)\}$.

\medskip
\noindent
{\bf Case 1. $d(u_0,v)<d(u_0,w).$}
\smallskip

Since $\leq$ is a BFS order (Lemma~\ref{l7.1}(1)),
we have $v \leq w$.
By Lemma~\ref{rem-quad-wbrid}, $v$
and $w'$ either coincide or are adjacent. In the first case we are done because $v$ and therefore $w'$ are
adjacent to $f(v)$. If $v$ and $w'$ are adjacent, since
$i=d(u,v)=d(u,w'),$ the vertices $v'$ and $f(w')$ coincide or are
adjacent by the induction assumption. Again, if $v'=f(w')$, we are
done. Now suppose that $v'$ and $f(w')$ are adjacent. Since $w'=f(w)$, we have
$w' \leq v$ (by Lemma~\ref{l7.1}(4)), and by the induction hypothesis, $v' \sim w'$.
This concludes the analysis of Case 1.

\medskip
\noindent
{\bf Case 2. $d(u_0,v)=d(u_0,w)=i+1.$}
\smallskip

Suppose, without loss of generality
that $v \leq w$. If the vertices $v'$ and $w'$ coincide,
then we are done. If $v' \neq w'$, then $v' \leq w'$ because $v \leq w$,
and thus $v' \nsim w$.
If $v'$ and $w'$ are adjacent, then the vertices
$v,w,w',v'$ define a $4$-cycle. Since $G$ is weakly bridged, this
cycle cannot be induced and since $v' \nsim w$, we have $w' \sim v$.
So, assume by way of contradiction that the vertices $v'$ and $w'$ are
not adjacent in $G$. If $v \sim w'$, then $v',w' \in I(v,u_0)$ by
Lemma~\ref{rem-quad-wbrid}, and we get $v' \sim w'$, contrary to our
assumptions. Consequently, $v' \nsim w$ and $w' \nsim v$.

Since $G$ is weakly modular, by TC($u_0$), there exists $s \sim v,w$
such that $d(u_0,s) = i$. Denote by $S$ the set of all such vertices
$s$. For every $s \in S$, since $s,v' \in I(v,u_0)$ (respectively, $s,v' \in
I(w,u_0)$) and since $G$ is weakly bridged, $s \sim v'$ (respectively, $s \sim
w'$).  For every $s \in S$, since $f(v)=v'$, $v' \leq s$ and thus
$f(v') \leq f(s)$ and $f(s) \sim v'$ by the induction hypothesis. For
the same reasons, for every $s \in S$, we have $f(s) \sim w'$.
For every $p \sim v',w'$ and any vertex $s \in S$, the cycle $(p,v',s,w',p)$
cannot be induced and thus $p \sim s$.

\begin{claim}\label{claim-Lvs}
For every $s \in S$, $L_{(v')}(v') \neq L_{(v')}(s)$.
\end{claim}

\begin{proof}[Proof of Claim~\ref{claim-Lvs}]
Suppose $L_{(v')}(v') = L_{(v')}(s)$. If $L_{(v')}(w') =
L_{(v')}(v')$, then we obtain $L_{v'}(s) = L_{(v')}(s) \cup\{v'\} = L_{(v')}(v')
\cup\{v'\}$ (since $L_{(v')}(v') \cup \{v'\}$ is a clique and $v' \sim
s$) and
$L_{v'}(w') = L_{(v')}(v')$ (since $v' \nsim w'$). Consequently, $v' =
\min_\leq \{ L_{v'}(w') \Delta L_{v'}(s)\}=\min_\leq \{ L_{(w')}(w')
\Delta L_{(w')}(s)\}$ and thus $s < w'$, a contradiction. Otherwise, if
$L_{(v')}(w') \neq L_{(v')}(v')$, let $p = \min_\leq \{ L_{(v')}(w') \Delta
L_{(v')}(s)\}$.  Since $L_{(v')}(v') = L_{(v')}(s)$, we conclude that  $p \in
L_{(v')}(s)$.  Consequently, $p = \min_\leq \{ L_{(w')}(w') \Delta
L_{(w')}(s)\}$ and thus $s < w'$, a contradiction.
\end{proof}

\begin{claim}\label{claim-s0}
 Let $s_0$ be the least vertex of $(S,\leq)$ and let $p = \min_{\leq}
 \{ L_{(v')}(v') \Delta L_{(v')}(s_0)\}$. Then for every $s \in S$ we have
 $L_{(p)}(v') = L_{(p)} (w') = L_{(p)}(s)$ and $p \nsim s$.
\end{claim}

\begin{proof}[Proof of Claim~\ref{claim-s0}]
By the definition of $p$, $L_{(p)}(v') =
L_{(p)}(s_0)$. If $L_{(p)}(w') \neq L_{(p)}(v')$, then  $q = \min_\leq
\{ L_{(p)}(v')\Delta L_{(p)}(w')\} \in L_{(p)}(v')$ since $v' \leq
w'$. Consequently, $\min_\leq \{ L_{(p)}(s_0)\Delta L_{(p)}(w')\} \in
L_{(p)}(s_0)$, and hence $s_0 \leq w'$, a contradiction.

Thus $L_{(p)}(w') = L_{(p)}(v') = L_{(p)}(s_0)$. For every $s \in S$ and
$p' \in L_{(p)}(v') = L_{(p)}(w')$ we have $p' \sim s$. Since
$L_{(p)}(v')$ is a clique, we get $L_{(p)}(v') \subseteq
L_{(p)}(s)$. Moreover, since $v' < s$, we have $L_{(p)}(v') =
L_{(p)}(s)$. Since $p \notin L_p(s_0)$ and $L_{p}(v) =
L_{(p)}(s_0) \cup \{p\}$ is a clique, we conclude that $p \nsim s_0$.  If there exists
$s_1 \in S$ such that $p \sim s_1$, then $L_{p}(s_1) = L_{(p)}(s_1) \cup
\{p\} = L_{p}(v)$. In this case, $p = \min_\leq (L_{p}(s_1) \Delta
L_{p}(s_0)) \in L_{p}(s_1)$, and thus $s_1 \leq s_0$, contrary to the
choice of $s_0$.
\end{proof}

Let $s_0$ be the least vertex of $(S,\leq)$ and let $s' = f(s_0)$. By
the induction assumption, we know that $s' \sim v', w'$ because  $v' < w'
< s_0$. Moreover, since $d(s',u_0) = i-1$, we have $s' \nsim v,w$.

Let $p = \min_\leq \{ L_{(v')}(v') \Delta L_{(v')}(s_0)\}$.  From
Claim~\ref{claim-s0}, $s_0 \nsim p$. Since $p \leq s_0$, $d(u_0,p)
\leq i$, and thus, if $p \sim v$ (respectively, $p \sim w$), then $s_0,p \in
I(u_0,v)$ (respectively, $s_0,p \in I(u_0,w)$). By
Lemma~\ref{rem-quad-wbrid}, $s_0\sim p$, a contradiction. If $p \sim
w'$, then $p \sim v',w'$, and thus $p \sim s_0$, a contradiction.

Since $s_0 \nsim p$, we conclude that $s' = f(s_0) \neq p$. If $s' < p$, then $f(v')
\leq s' < p$ and thus $f(v') = \min_\leq L_p(v') = \min_\leq L_p(s_0)
= f(s_0) = s'$; consequently, $s',p \in L_{p}(v')$ and thus $p \sim
s'$.  If $p < s'$, then $p = f(v')$ and by the induction assumption,
$s' \sim p$.

Consequently, $v,w,v',w',s_0,s'$ and $p$ induce in $G$ a $\widehat W_5$.
Thus, by the $\widehat W_5$-condition, there exists a vertex $t \sim
v,w,v',w',s_0,s',p$. Hence $t \in S$, and $t \sim p$, contradicting
Claim~\ref{claim-s0}. This finishes the analysis of Case 2 and concludes the
proof of the first assertion of the lemma.

Finally, we claim that the mapping $f$ is a mooring of $G$ onto $u_0$. Indeed,
for every $v \neq u_0$, we have $v \sim f(v)$ and $d(f(v),u_0) = d(v,u_0)-1$.
Moreover, for any edge $vw$ of $G$, from the first assertion it follows that either
$f(v) = f(w)$ or $f(v)\sim f(w)$, i.e., $f$ is indeed a mooring.
\end{proof}

This concludes the proof of Proposition \ref{prop-moorable}.
\end{proof}


\begin{bibdiv}
\begin{biblist}

\bib{AnFa}{article}{
   author={Anstee, Richard P.},
   author={Farber, Martin},
   title={On bridged graphs and cop-win graphs},
   journal={J. Combin. Theory Ser. B},
   volume={44},
   date={1988},
   number={1},
   pages={22--28},
   issn={0095-8956},
   review={\MR{923263 (89h:05053)}},
}

\bib{Ba_retract}{article}{
   author={Bandelt, Hans-J{\"u}rgen},
   title={Retracts of hypercubes},
   journal={J. Graph Theory},
   volume={8},
   date={1984},
   number={4},
   pages={501--510},
   issn={0364-9024},
   review={\MR{766499 (86c:05104)}},
   doi={10.1002/jgt.3190080407},
}

\bib{BaCh_helly}{article}{
   author={Bandelt, Hans-J{\"u}rgen},
   author={Chepoi, Victor},
   title={A Helly theorem in weakly modular space},
   journal={Discrete Math.},
   volume={160},
   date={1996},
   number={1-3},
   pages={25--39},
   issn={0012-365X},
   review={\MR{1417558 (97h:52006)}},
}

\bib{BaCh_weak}{article}{
   author={Bandelt, Hans-J{\"u}rgen},
   author={Chepoi, Victor},
   title={Decomposition and $l_1$-embedding of weakly median graphs},
   note={Discrete metric spaces (Marseille, 1998)},
   journal={European J. Combin.},
   volume={21},
   date={2000},
   number={6},
   pages={701--714},
   issn={0195-6698},
   review={\MR{1791200 (2002i:05091)}},
   doi={10.1006/eujc.1999.0377},
}

\bib{BaCh_wma1}{article}{
   author={Bandelt, Hans-J{\"u}rgen},
   author={Chepoi, Victor},
   title={The algebra of metric betweenness. I. Subdirect representation and
   retraction},
   journal={European J. Combin.},
   volume={28},
   date={2007},
   number={6},
   pages={1640--1661},
   issn={0195-6698},
   review={\MR{2339492 (2008h:05038)}},
   doi={10.1016/j.ejc.2006.07.003},
}

\bib{BaCh_survey}{article}{
   author={Bandelt, Hans-J{\"u}rgen},
   author={Chepoi, Victor},
   title={Metric graph theory and geometry: a survey},
   conference={
      title={Surveys on discrete and computational geometry},
   },
   book={
      series={Contemp. Math.},
      volume={453},
      publisher={Amer. Math. Soc.},
      place={Providence, RI},
   },
   date={2008},
   pages={49--86},
   review={\MR{2405677 (2009h:05068)}},
}

\bib{BaHe}{article}{
   author={Bandelt, Hans-J{\"u}rgen},
   author={Hedl{\'{\i}}kov{\'a}, Jarmila},
   title={Median algebras},
   journal={Discrete Math.},
   volume={45},
   date={1983},
   number={1},
   pages={1--30},
   issn={0012-365X},
   review={\MR{700848 (84h:06015)}},
   doi={10.1016/0012-365X(83)90173-5},
}

\bib{BaMuWi}{article}{
   author={Bandelt, Hans-J{\"u}rgen},
   author={Mulder, Henry Martyn},
   author={Wilkeit, Elke},
   title={Quasi-median graphs and algebras},
   journal={J. Graph Theory},
   volume={18},
   date={1994},
   number={7},
   pages={681--703},
   issn={0364-9024},
   review={\MR{1297190 (95h:05059)}},
   doi={10.1002/jgt.3190180705},
}

\bib{Bj}{article}{
   author={Bj{\"o}rner, Anders},
   title={Topological methods},
   conference={
      title={Handbook of combinatorics, Vol.\ 1,\ 2},
   },
   book={
      publisher={Elsevier},
      place={Amsterdam},
   },
   date={1995},
   pages={1819--1872},
   review={\MR{1373690 (96m:52012)}},
}

\bib{br-03}{article}{
   author={Bre{\v{s}}ar, Bo{\v{s}}tjan},
   title={Arboreal structure and regular graphs of median-like classes},
   journal={Discuss. Math. Graph Theory},
   volume={23},
   date={2003},
   number={2},
   pages={215--225},
   issn={1234-3099},
   review={\MR{2070153 (2005f:05143)}},
}

\bib{BrChChKoLaVa}{article}{
    title     ={Retracts of products of chordal graphs},
    author={Bre{\v{s}}ar, Bo{\v{s}}tjan},
    author={Chalopin, Jeremie},
    author={Chepoi, Victor},
    author={Kov{\v{s}}e, Matja{\v{z}}},
    author={Labourel, Arnaud},
    author={Vax{\`e}s, Yann},
    journal={J. Graph Theory},
    volume    ={73},
    date      ={2013},
    eprint    ={http://pageperso.lif.univ-mrs.fr/~victor.chepoi/RetractsPCG.pdf}
}

\bib{brth-09}{article}{
   author={Bre{\v{s}}ar, Bo{\v{s}}tjan},
   author={Tepeh Horvat, Aleksandra},
   title={Cage-amalgamation graphs, a common generalization of chordal and
   median graphs},
   journal={European J. Combin.},
   volume={30},
   date={2009},
   number={5},
   pages={1071--1081},
   issn={0195-6698},
   review={\MR{2513910 (2010b:05128)}},
   doi={10.1016/j.ejc.2008.09.003},
}

\bib{BrHa}{book}{
   author={Bridson, Martin R.},
   author={Haefliger, Andr{\'e}},
   title={Metric spaces of non-positive curvature},
   series={Grundlehren der Mathematischen Wissenschaften [Fundamental
   Principles of Mathematical Sciences]},
   volume={319},
   publisher={Springer-Verlag},
   place={Berlin},
   date={1999},
   pages={xxii+643},
   isbn={3-540-64324-9},
   review={\MR{1744486 (2000k:53038)}},
}

\bib{Cha1}{article}{
   author={Chastand, Marc},
   title={Fiber-complemented graphs. I. Structure and invariant subgraphs},
   journal={Discrete Math.},
   volume={226},
   date={2001},
   number={1-3},
   pages={107--141},
   issn={0012-365X},
   review={\MR{1801065 (2002i:05095)}},
   doi={10.1016/S0012-365X(00)00183-7},
}

\bib{Cha2}{article}{
   author={Chastand, Marc},
   title={Fiber-complemented graphs. II. Retractions and endomorphisms},
   journal={Discrete Math.},
   volume={268},
   date={2003},
   number={1-3},
   pages={81--101},
   issn={0012-365X},
   review={\MR{1982390 (2004d:05169)}},
   doi={10.1016/S0012-365X(02)00682-9},
}

\bib{ChLaPo}{article}{
   author={Chastand, Marc},
   author={Laviolette, Fran{\c{c}}ois},
   author={Polat, Norbert},
   title={On constructible graphs, infinite bridged graphs and weakly
   cop-win graphs},
   journal={Discrete Math.},
   volume={224},
   date={2000},
   number={1-3},
   pages={61--78},
   issn={0012-365X},
   review={\MR{1781285 (2002g:05152)}},
   doi={10.1016/S0012-365X(00)00127-8},
}

\bib{ChPo}{article}{
   author={Chastand, Marc},
   author={Polat, Norbert},
   title={On geodesic structures of weakly median graphs. I. Decomposition
   and octahedral graphs},
   journal={Discrete Math.},
   volume={306},
   date={2006},
   number={13},
   pages={1272--1284},
   issn={0012-365X},
   review={\MR{2237713 (2007c:05064)}},
   doi={10.1016/j.disc.2005.10.034},
}

\bib{Ch_triangle}{article}{
   author={Chepoi, Victor},
   title={Classification of graphs by means of metric triangles},
   language={Russian},
   journal={Metody Diskret. Analiz.},
   number={49},
   date={1989},
   pages={75--93, 96},
   issn={0136-1228},
   review={\MR{1114014 (92e:05041)}},
}

\bib{Ch_bridged}{article}{
   author={Chepoi, Victor},
   title={Bridged graphs are cop-win graphs: an algorithmic proof},
   journal={J. Combin. Theory Ser. B},
   volume={69},
   date={1997},
   number={1},
   pages={97--100},
   issn={0095-8956},
   review={\MR{1426753 (97g:05150)}},
}

\bib{Ch_CAT}{article}{
   author={Chepoi, Victor},
   title={Graphs of some ${\rm CAT}(0)$ complexes},
   journal={Adv. in Appl. Math.},
   volume={24},
   date={2000},
   number={2},
   pages={125--179},
   issn={0196-8858},
   review={\MR{1748966 (2001a:57004)}},
}

\bib{ChOs}{article}{
    title     ={Dismantlability of weakly systolic complexes and applications},
    author    ={Chepoi, Victor},
    author    ={Osajda, Damian},
    status    ={submitted},
    eprint    ={arXiv:0910.5444v1 [math.GR]},
    date={2009}
}


\bib{DrSch}{article}{
   author={Dress, Andreas W. M.},
   author={Scharlau, Rudolf},
   title={Gated sets in metric spaces},
   journal={Aequationes Math.},
   volume={34},
   date={1987},
   number={1},
   pages={112--120},
   issn={0001-9054},
   review={\MR{915878 (89c:54057)}},
   doi={10.1007/BF01840131},
}

\bib{EpCaHoLePaTh}{book}{
   author={Epstein, David B. A.},
   author={Cannon, James W.},
   author={Holt, Derek F.},
   author={Levy, Silvio V. F.},
   author={Paterson, Michael S.},
   author={Thurston, William P.},
   title={Word processing in groups},
   publisher={Jones and Bartlett Publishers},
   place={Boston, MA},
   date={1992},
   pages={xii+330},
   isbn={0-86720-244-0},
   review={\MR{1161694 (93i:20036)}},
}

\bib{FaJa}{article}{
   author={Farber, Martin},
   author={Jamison, Robert E.},
   title={On local convexity in graphs},
   journal={Discrete Math.},
   volume={66},
   date={1987},
   number={3},
   pages={231--247},
   issn={0012-365X},
   review={\MR{900046 (89e:05167)}},
}

\bib{Gr}{article}{
   author={Gromov, Mikhail},
   title={Hyperbolic groups},
   conference={
      title={Essays in group theory},
   },
   book={
      series={Math. Sci. Res. Inst. Publ.},
      volume={8},
      publisher={Springer},
      place={New York},
   },
   date={1987},
   pages={75--263},
   review={\MR{919829 (89e:20070)}},
}

\bib{Hag}{article}{
    title     ={Complexes simpliciaux hyperboliques
                de grande dimension},
    author    ={Haglund, Fr\' ed\' eric},
    status    ={preprint},
    journal   ={Prepublication Orsay},
    volume    ={71},
    date      ={2003},
    eprint    ={http://www.math.u-psud.fr/~biblio/ppo/2003/fic/ppo_2003_71.pdf}
}

\bib{ImKl}{book}{
name = {Handbook of product graphs},
author = {Hammack, Richard}
author={Imrich, Wilfried}
author={Klav\v{z}ar, Sandi},
note = {With a foreword by Peter Winkler},
issn = {978-1-4398-1304-1},
year = {2011},
date = {2011-01-01},
pages = {xviii+518},
publisher = {CRC Press},
address = {Boca Raton, FL},
edition = {Second},
series = {Discrete Mathematics and its Applications (Boca Raton)},
review={\MR{2817074}},
}

\bib{Ha}{book}{
   author={Hatcher, Allen},
   title={Algebraic topology},
   publisher={Cambridge University Press},
   place={Cambridge},
   date={2002},
   pages={xii+544},
   isbn={0-521-79160-X},
   isbn={0-521-79540-0},
   review={\MR{1867354 (2002k:55001)}},
}

\bib{Is}{article}{
   author={Isbell, John R.},
   title={Median algebra},
   journal={Trans. Amer. Math. Soc.},
   volume={260},
   date={1980},
   number={2},
   pages={319--362},
   issn={0002-9947},
   review={\MR{574784 (81i:06006)}},
   doi={10.2307/1998007},
}

\bib{JaSw}{article}{
   author={Januszkiewicz, Tadeusz},
   author={{\'S}wi{\c{a}}tkowski, Jacek},
   title={Simplicial nonpositive curvature},
   journal={Publ. Math. Inst. Hautes \'Etudes Sci.},
   number={104},
   date={2006},
   pages={1--85},
   issn={0073-8301},
   review={\MR{2264834 (2007j:53044)}},
   doi={10.1007/s10240-006-0038-5},
}

\bib{Mu}{book}{
   author={Mulder, Henry Martyn},
   title={The interval function of a graph},
   series={Mathematical Centre Tracts},
   volume={132},
   publisher={Mathematisch Centrum},
   place={Amsterdam},
   date={1980},
   pages={iii+191},
   isbn={90-6196-208-0},
   review={\MR{605838 (82h:05045)}},
}

\bib{NoWi}{article}{
   author={Nowakowski, Richard},
   author={Winkler, Peter},
   title={Vertex-to-vertex pursuit in a graph},
   journal={Discrete Math.},
   volume={43},
   date={1983},
   pages={235--239},
}

\bib{Osajda}{article}{
    title     ={A combinatorial non-positive
                curvature I: weak systolicity},
    author    ={Osajda, Damian},
    status    ={preprint},
    eprint    ={http://www.math.uni.wroc.pl/~dosaj/trav/},
    date      ={2010},
}

\bib{Polat-inv}{article}{
   author={Polat, Norbert},
   title={Finite invariant simplices in infinite graphs},
   journal={Period. Math. Hungar.},
   volume={27},
   date={1993},
   number={2},
   pages={125--136},
   issn={0031-5303},
   review={\MR{1291160 (96a:05073)}},
   doi={10.1007/BF01876637},
}

\bib{Po}{article}{
   author={Polat, Norbert},
   title={On isometric subgraphs of infinite bridged graphs and geodesic
   convexity},
   note={Algebraic and topological methods in graph theory (Lake Bled,
   1999)},
   journal={Discrete Math.},
   volume={244},
   date={2002},
   number={1-3},
   pages={399--416},
   issn={0012-365X},
   review={\MR{1844048 (2003c:05070)}},
}

\bib{Po-infbrid}{article}{
  author    = {Polat, Norbert},
  title     = {On infinite bridged graphs and strongly dismantlable graphs},
  journal   = {Discrete Math.},
  volume    = {211},
  year      = {2000},
  pages     = {153--166},
}

\bib{Ro}{article}{
    title     ={Poc sets,
            median algebras and group actions. An extended study of
            Dunwoody's construction and Sageev's theorem},
    author    ={Roller, Martin A.},
    status    ={preprint},
    journal   ={Univ. of
            Southampton Preprint Ser.},
    date      ={1998},
}

\bib{Sag} {article}{
    AUTHOR = {Sageev, Misha},
     TITLE = {Ends of group pairs and non-positively curved cube complexes},
   JOURNAL = {Proc. London Math. Soc. (3)},
  FJOURNAL = {Proceedings of the London Mathematical Society. Third Series},
    VOLUME = {71},
      YEAR = {1995},
    NUMBER = {3},
     PAGES = {585--617},
      ISSN = {0024-6115},
     CODEN = {PLMTAL},
   MRCLASS = {20F32 (20E08)},
  REVIEW = {\MR{97a:20062}},
MRREVIEWER = {G. Peter Scott},
}

\bib{SoCh}{article}{
   author={Soltan, Valeriu P.},
   author={Chepoi, Victor},
   title={Conditions for invariance of set diameters under
   $d$-convexification in a graph},
   language={Russian, with English summary},
   journal={Kibernetika (Kiev)},
   date={1983},
   number={6},
   pages={14--18},
   issn={0023-1274},
   translation={
      journal={Cybernetics},
      volume={19},
      date={1983},
      number={6},
      pages={750--756 (1984)},
      issn={0011-4235},
   },
   review={\MR{765117 (86k:05102)}},
}

\bib{vdV1}{article}{
   author={van de Vel, Marcel},
   title={Matching binary convexities},
   journal={Topology Appl.},
   volume={16},
   date={1983},
   number={3},
   pages={207--235},
   issn={0166-8641},
   review={\MR{722115 (85f:52026)}},
   doi={10.1016/0166-8641(83)90019-6},
}

\bib{vdV}{book}{
   author={van de Vel, Marcel},
   title={Theory of convex structures},
   series={North-Holland Mathematical Library},
   volume={50},
   publisher={North-Holland Publishing Co.},
   place={Amsterdam},
   date={1993},
   pages={xvi+540},
   isbn={0-444-81505-8},
   review={\MR{1234493 (95a:52002)}},
}

\bib{Zi}{book}{
   author={Ziegler, G{\"u}nter M.},
   title={Lectures on polytopes},
   series={Graduate Texts in Mathematics},
   volume={152},
   publisher={Springer-Verlag},
   place={New York},
   date={1995},
   pages={x+370},
   isbn={0-387-94365-X},
   review={\MR{1311028 (96a:52011)}},
}

\end{biblist}
\end{bibdiv}

\end{document}